\newcommand{\G}{\Gamma}
\def\0{{\boldsymbol 0}}
\def\RR{{\mathbb R}}
\def\R{{\mathcal R}}
\def\P{{\mathcal P}}
\def\B{{\mathcal B}}
\def\ol{\overline}
\def\ds{\displaystyle}
\def\Lra{\Leftrightarrow}
\def\Ra{\Rightarrow}
\newtheorem{theorem}{Theorem}[section]
\newtheorem{lemma}[theorem]{Lemma}
\newtheorem{corollary}[theorem]{Corollary}
\newtheorem{proposition}[theorem]{Proposition}
\newtheorem{definition}[theorem]{Definition}
\newtheorem{notation}[theorem]{Notation}
\newtheorem{remark}[theorem]{Remark}
\newtheorem{example}[theorem]{Example}
\newtheorem{problem}{Problem}[section]
\definecolor{ForestGreen}{RGB}{12, 110, 46}
\definecolor{ForestGreenTwo}{RGB}{120, 110, 86}
\newenvironment{proof}{{\noindent\it Proof. }}{\nopagebreak\hspace*{0.5cm}\hfill$\hbox{\rule{3pt}{6pt}}$\smallskip}
\definecolor{ForestGreen}{RGB}{12, 110, 46}
\definecolor{ForestGreenTwo}{RGB}{120, 110, 86}
\newfont\fiverm{cmr5}
\def\eeq{\end{equation}} 
\def\lbeq#1{\begin{equation} \label{#1}} 
\title{
On (almost) $2$-$Y$-homogeneous distance-biregular graphs
}
\author{
{Blas Fern\'andez}\\
{\small UP IAM}\\
{\small University of Primorska}\\
{\small Muzejski trg 2, 6000 Koper, Slovenia }\\
{\small Blas.Fernandez@famnit.upr.si} \and
{Safet Penji\'c}\\
{\small UP IAM}\\
{\small University of Primorska}\\
{\small Muzejski trg 2, 6000 Koper, Slovenia }\\
{\small Safet.Penjic@iam.upr.si}
}
\begin{document}

\maketitle

\begin{abstract}
Let $\G$ denote a bipartite graph with vertex set $X$, color partitions $Y$, $Y'$,  and assume that every vertex in $Y$ has eccentricity $D\ge 3$. For $z\in X$ and non-negative integer $i$, let $\G_{i}(z)$ denote the set of vertices in $X$ that are at distance $i$ from $z$. Graph $\G$ is {\it almost $2$-$Y$-homogeneous} whenever for all $i \; (1\leq i \leq D-2)$ and for all $x\in Y$, $y \in \G_2(x)$ and $z \in \G_{i}(x)\cap\G_i(y)$, the number of common neighbours of $x$ and $y$ which are at distance $i-1$ from $z$ is independent of the choice of $x$, $y$ and $z$. In addition, if the above condition holds also for $i=D-1$, then we say that $\G$ is {\it $2$-$Y$-homogeneous}.

Now, let $\G$ denote a distance-biregular graph. In this paper we study the intersection arrays of $\G$ and we give sufficient and necessary conditions under which $\G$ is (almost) $2$-$Y$-homogeneous. In the case when $\G$ is $2$-$Y$-homogeneous we write the intersection numbers of the color class $Y$ in terms of three parameters.
\end{abstract}


\smallskip
{\small
\noindent
{\it{MSC:}} 05C75, 05E30

\smallskip
\noindent 
{\it{Keywords:}} Distance-biregular graph, distance-regular graph, equitable partition.
}


\section{Introduction}
\label{1a}

The reader is referred to Section~\ref{pa} for formal definitions.

Let $\G$ denote a graph  with the vertex set $X$. By $\partial(x,y)$ we denote the usual distance between vertices $x$ and $y$. For $z\in X$ and an integer $i$, let $\G_i(z)$ denote the set of all vertices which are at distance $i$ from $z$. A vertex $x\in X$ is said to be {\it distance-regularized} if for each $y\in X$, the numbers $c_i(x):=|\G_{i-1}(x)\cap\G_1(y)|$, $a_i(x):=|\G_{i}(x)\cap\G_1(y)|$ and $b_i(x):=|\G_{i+1}(x)\cap\G_1(y)|$ depend only on the distance $\partial(x,y)=i$, and are independent of the choice of $y\in\G_{i}(x)$. For the moment assume that $x$ is a distance-regularized vertex of $\G$. Then the array
$$
(b_0(x),b_1(x),\ldots,b_d(x);1,c_2(x),\ldots,c_{d}(x))
$$
\noindent
is called the {\it intersection array for $x$} (or, {\it intersection array of $\G$ with respect to $x$}). A connected graph in which every vertex is distance-regularized is called a {\it distance-regularized} graph. A special case of such graphs are {\it distance-regular} graphs in which all vertices have the same intersection array. Other examples are bipartite graphs in which vertices in the same color class have the same intersection array, but which are not distance-regular. We call these graphs {\it distance-biregular}. {\sc Godsil} and {\sc Shawe-Taylor} \cite{GST} proved that every distance-regularized graph is either distance-regular or distance-biregular. The family of distance-biregular graphs is quite rich, and examples can be obtained  from several algebraic and geometric objects \cite[Section~4]{DC}. Moreover, these objects can be seen as non-symmetric association schemes \cite[Section~1]{DC}. There are a number of excellent articles on distance-biregular graphs, e.g., \cite{Fp, Fs, GST, MST, NK, SH}.

In this paper we study the combinatorial structure of distance-biregular graphs. We give an answer to the following problem.

\begin{problem}
Let $\G$ denote a distance-biregular graph with vertex set $X=Y\cup Y'$, color partitions $Y$, $Y'$, and let $D$ denote the eccentricity of vertices in $Y$. For vertices $x,y\in X$, let $\G_{i,j}(x,y)=\G_i(x)\cap\G_j(y)$. {\em Find necessary and sufficient conditions on the intersection arrays of $\G$} for which the graph has one of the following two combinatorial structures:
\begin{enumerate}[rightmargin=1.8cm, leftmargin=2cm,label=\rm(\alph*)]
\item for all $i \; (1\leq i \leq D-2)$, and for all $x\in Y$, $y\in\G_2(x)$ and $z\in\G_{i,i}(x,y)$ the number $|\G_{i-1}(z)\cap \G_{1,1}(x,y)|$ is independent of the choice of $x$, $y$ and $z$. If $\G$ has this combinatorial structure, then we say that $\G$ is {\em almost $2$-$Y$-homogeneous}.
\item for all $i \; (1\leq i \leq D-1)$ and for all $x\in Y$, $y\in\G_2(x)$ and $z\in\G_{i,i}(x,y)$ the number $|\G_{i-1}(z)\cap \G_{1,1}(x,y)|$ is independent of the choice of $x$, $y$ and $z$. If $\G$ has this combinatorial structure, then we say that $\G$ is {\em $2$-$Y$-homogeneous}.
\end{enumerate}
\end{problem}


This paper is motivated by a desire to find a combinatorial characterization of bipartite graphs $\G=(Y\cup Y',\R)$ which are (almost) $2$-$Y$-homogeneous and to classify all such graphs. If $\G$ is distance-regular, then this situation occurs if and only if the intersection array of $\G$ is at least one of the types given in \cite[Theorem~1.2]{NKsm}. For a general graph $\G$ the above problem is very difficult, and in this paper we study distance-biregular graphs. One of our long term goals is to study and classify all distance-biregular graphs which have a unique irreducible $T$-module (up to isomorphism) of endpoint $2$, which is thin, and this paper is one of the steps for solving this problem. See also \cite{FMe} for the results about the structure of irreducible $T$-module, when the endpoint is $1$, also for the case when $\G$ is distance-biregular. Study of thin $T$-modules of endpoint $2$ of distance-regular graphs is in the last several years very active area of research, see, for example \cite{FM, MM1, MM2, MM4, MM3, MmTp}. The second long term goal for better understanding combinatorial properties of such graphs, is {\em finding an algorithm which will draw a such graph (i.e., explicitly give all edges between vertices) from its intersection arrays} (if such graph exists) - the problem that because of its difficulty does not have enough attention inside the  mathematical community. A ``simple'' combinatorial structure of the graph can contribute to more easily finding such an algorithm, and a combinatorial structure of the $2$-$Y$-homogeneous distance-biregular graphs is promising. Let's mention also that {\sc Mohar} and {\sc Shawe-Taylor} \cite{MST} showed that $\G$ is a distance-biregular graph with vertices of valency $2$ if and only if $\G$ is either a complete bipartite graph $K_{2,n}$ for some $n\geq1$, or the subdivision graph of a $(\kappa, g)$-graph (see \cite[Corollary~3.5]{MST} and Subsection~\ref{kH} for more details). It turns out that if $\G$ is a $(\kappa,g)$-cage graph with vertex set $X$ then the subdivision graph $S(\G)$ is a $2$-$X$-homogeneous distance-biregular graph (see Section~\ref{D2}). Our main result is the following theorem.

\begin{theorem}
\label{rH}
Let $\G$ denote a $2$-$Y$-homogeneous distance-biregular graph with color classes $Y$, $Y'$, and let $D$ denote the eccentricity of vertices in the color class $Y$. Assume that $D\ge 3$, and let $c_2'$ denote the intersection number $|\G_1(x)\cap\G_{1}(y)|$ $(x\in Y',\,y\in\G_{2}(x))$ of the color class $Y'$.
\begin{enumerate}[label={\rm(\roman*)}]
\item If $c_2'=1$ then the intersection array of the color class $Y$ is one of the following two types
\begin{align*}
(k,k'-1,k-1,k'-1,k-1,\ldots,k'-1,k-1;1,1,1,1,\ldots,1,k'),\qquad \mbox{ for odd } D,\\
(k,k'-1,k-1,k'-1,k-1,\ldots,k-1,k'-1;1,1,1,1,\ldots,1,k),\qquad \mbox{ for even } D.
\end{align*}
\item If $c_2'=2$ then the first three intersection numbers of the color class $Y$ are
$$
(k,\frac{k-1}{c-1},k-c,\frac{k-1}{c-1}-c,\ldots;1,c,c+1,\ldots)
$$
for some integers $k$ and $c\ge 2$.
\item If $c_2'\ge 3$ then $D\le 5$. Moreover, we have
\begin{enumerate}[label={\rm(\alph*)}]
\item If $D=3$ then the intersection array of the color class $Y$ is of the form $(k,c,k-c;1,c,c+1)$ for some integers $k$ and $c$, where $k>c\ge 2$. 
\item If $D=4$ then the intersection array of the color class $Y$ is of the form 
$
(k,k'-1,k-c, k'-1-\frac{c(c'-1)}{\gamma}; 1, c, \frac{c(c'-1)}{\gamma}+1, k)
$
for some positive integers $k$, $k'$ and $c$, where $k>c\ge 2$, $k'>2$, $c'=\frac{(k'-1)(c-1)}{k-1}+1$ and $\gamma=\frac{(c-1)(c'-2)}{k'-2}+1$.
\item If $D=5$ then the intersection array of the color class $Y$ is of the form 
$
(k,k'-1,k-c, b_3, b_4; 1, c, c_3, c_4, k')
$
for some positive integers $k$, $k'$ and $c$, where $k>c\ge 2$, $k'>2$, 
$c_3=k'-1-\frac{c(c'-1)}{\gamma}$, 
$c'=\frac{(k'-1)(c-1)}{k-1}+1$, 
$\gamma=\frac{(c-1)(c'-2)}{k'-2}+1$, 
$b_3=k'-c_3$, 
$c_4=\frac{k(k'-1)-\frac{c(b_3-1)(k-1)}{c-1}}{c_3}$ and $b_4=k-c_4$.
\end{enumerate}
\end{enumerate}
\end{theorem}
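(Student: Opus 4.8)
The plan is to convert the local $2$-$Y$-homogeneity into a global recurrence on the intersection numbers of the colour class $Y$, and then read off each case from that recurrence.

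First I would fix $x\in Y$ and $y\in\G_2(x)$ and record the basic data. Writing $k$, $k'$ for the valencies of $Y$- and $Y'$-vertices, bipartiteness gives $b_i+c_i=k$ for even $i$ and $b_i+c_i=k'$ for odd $i$, with $c_1=1$, $b_0=k$, $b_1=k'-1$. I set $\gamma_i:=|\G_{i-1}(z)\cap\G_{1,1}(x,y)|$ for $z\in\G_{i,i}(x,y)$; by the $2$-$Y$-homogeneity hypothesis this is well defined for $1\le i\le D-1$ (the constant $\gamma$ appearing in part~(iii) is one of these). Clearly $\gamma_1=1$. The key elementary observation is that, since the $c_2$ common neighbours of $x$ and $y$ all lie in one colour class while $z$ lies in $Y$ or $Y'$ according to the parity of $i$, each such common neighbour is at distance exactly $i-1$ or $i+1$ from $z$; hence exactly $c_2-\gamma_i$ of them are at distance $i+1$. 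This dichotomy is what lets the homogeneity parameter be propagated.

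Next I would derive the recurrence. Considering the partition of $X$ into the cells $\G_{i,j}(x,y)$ — nonempty only when $i+j$ is even and $|i-j|\le 2$ — the $2$-$Y$-homogeneity forces this partition to be equitable, and I would double-count the edges between consecutive cells. Combined with the standard identities tying the $Y$- and $Y'$-intersection arrays of a distance-biregular graph (in particular those governed by $c_2'$ on the $Y'$-side), this yields a relation expressing $\gamma_{i+1}$, and thereby $c_{i+1}$, in terms of $\gamma_i$, $c_i$, $b_i$ and $c_2'$. With the seed values $k$, $k'$ and $c:=c_2$ fixed, the recurrence then determines the whole array of the colour class $Y$, so the classification reduces to analysing this one recurrence. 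I would then split on $c_2'$. Since $c_2'=1$ is equivalent to the absence of $4$-cycles, it forces $c_2=1$ and, by the resulting girth condition, $c_i=1$ for all $2\le i\le D-1$, while $b_i$ alternates between $k-1$ and $k'-1$; the terminal $c_D\in\{k,k'\}$ is then fixed by the eccentricity, giving the two parity-dependent arrays of part~(i). For $c_2'=2$ the same recurrence produces $b_1=(k-1)/(c-1)$ and the displayed initial segment of part~(ii) after a short computation. For $c_2'\ge 3$ I would prove the bound $D\le 5$ by showing the recurrence cannot be sustained: subject to integrality and the admissibility constraint $0\le\gamma_i\le c_2$, continuing past $i=5$ forces one of these to fail. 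Finally, for each $D\in\{3,4,5\}$ I would solve the now-finite recurrence explicitly, the auxiliary quantities $c'$ and $\gamma$ in parts~(iii)(b),(c) arising as the closed forms of the counting identities.

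The main obstacle is twofold. Establishing the exact recurrence relating consecutive $\gamma_i$ is delicate, since it requires meshing the local homogeneity data with the global two-array relations while keeping the bipartite parity bookkeeping correct. Harder still is the nonexistence step for $c_2'\ge 3$: proving $D\le 5$ is a genuine feasibility argument, which I expect to hinge on a monotonicity or positivity property showing that a large value of $c_2'$ drives the parameters $\gamma_i$ outside the admissible range $[0,c_2]$ after boundedly many steps.
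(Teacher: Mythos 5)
Your overall skeleton (introduce the scalars $\gamma_i$, turn homogeneity into counting identities relating them to the intersection numbers, then split on $c_2'$) is the same as the paper's, but the three places where the real content lies are either wrong or left unproved. First, in the case $c_2'=1$: deducing $c_2=1$ is fine (it follows from $b_1(c_2-1)=b_1'(c_2'-1)$, i.e.\ \eqref{r5}), but your next step, ``by the resulting girth condition, $c_i=1$ for all $2\le i\le D-1$,'' is not a consequence of girth. Girth $\ge 6$ gives only $c_2=c_2'=1$ and says nothing about $c_3,\dots,c_{D-1}$; the implication $c_2=1\Rightarrow c_i=1\ (i\le D-1)$ is false for distance-biregular graphs in general and genuinely requires the homogeneity hypothesis. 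This is exactly the paper's Theorem~\ref{Ep}, proved by contradiction: one takes the least $t$ with $c_{t+1}>1$, uses $\Delta_t=0$ to force $b_{t-1}=1$ or $b'_{t-1}=1$ according to the parity of $t$, and derives $k'=2$ via Lemma~\ref{4w}. Second, your framework rests on a recurrence computing $\gamma_{i+1}$ and $c_{i+1}$ from $(\gamma_i,c_i,b_i,c_2')$, which ``determines the whole array'' from $k,k',c$. The identities that actually follow from the double counts (the paper's Lemma~\ref{gH}) give $c_{i+1}$ (for even $i$) or $c'_{i+1}$ (for odd $i$) in terms of $\gamma_i$ and $c_i$ or $c'_i$, but provide no formula for $\gamma_{i+1}$; the further relation in Lemma~\ref{nH} introduces a new unknown $\delta_i$ rather than closing the system. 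Indeed, the paper's statement (ii) for $c_2'=2$ pins down only the first three intersection numbers, and completing that case is listed as an open problem at the end of the paper --- so the all-determining recurrence you posit is not available by these methods, and the closed forms for $D=4,5$ in (iii)(b),(c) need extra relations (e.g.\ $c_4=k$ when $D=4$, and for $D=5$ a computation of $p^4_{2,4}$ from $\Delta_4=0$ as in Lemma~\ref{Fm}).

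Third, and most seriously, the bound $D\le 5$ when $c_2'\ge 3$ --- the heart of part (iii) --- is not proved in your proposal: you only ``expect'' a feasibility argument in which integrality or $0\le\gamma_i\le c_2$ fails when the recurrence is continued past $i=5$. The paper's proof (Theorem~\ref{hH}) is a short, specific argument located entirely at $i=2,3$, not a propagation argument: if $D\ge 6$ then $3+3\le D$, so Lemma~\ref{jH} gives $c_3\le b_3$, i.e.\ $2(c_3-1)\le k'-2$; substituting $c_3-1=c_2(c_2'-1)/\gamma_2$ and $k'-2=(c_2-1)(c_2'-2)/(\gamma_2-1)$ from Lemma~\ref{gH}(i),(ii) and using $(c_2-1)(c_2'-2)<c_2(c_2'-1)$ yields $2(\gamma_2-1)<\gamma_2$, i.e.\ $\gamma_2<2$, contradicting $\gamma_2\ge 2$ (which follows from Lemma~\ref{gH}(i) since $c_2'\ge 3$ forces $c_2\ge 2$). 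Without this step, or a worked-out substitute, part (iii) of the theorem remains unproven, so the proposal as it stands has a genuine gap precisely where the theorem is hardest.
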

By \cite[Lemma~2.3]{GST} the intersection numbers of the color class $Y'$ can be computed in terms of the intersection numbers of the color class $Y$.

In the end of this section we summarize our main results. In Section~\ref{pa} we give notation, formal definitions and we recall some basic properties of distance-biregular graphs. In Section~\ref{La} we define certain scalars $\gamma_i$ and we compute some equalities in the case when $c_2'\ge 2$, which we use later. We also show that $c_2'\ge 3$ implies $D\le 5$. In Section~\ref{D2} we show that the subdivision graph $S(\Gamma)$ of a $(\kappa,g)$-cage graph $\Gamma=(X,\R)$ is $2$-$X$-homogeneous. In Sections~\ref{D1} and \ref{Ev} we define and study scalars $\Delta_i$ $(2\le i\le \min\{D-1,D'-1\})$ (where $D'$ denotes the eccentricity of vertices in the color class $Y$). These scalars can be computed from the intersection array of a given distance-biregular graph and they play an important role: from their values we can determine if a given distance-biregular graph is (almost) $2$-$Y$-homogeneous or not. We show that $\Delta_i\ge 0$ and that the following (i)--(iv) are equivalent: (i) The scalar $\Delta_i=0$; (ii) For all $x\in Y$, $y\in\G_2(x)$ and $z\in\G_{i,i}(y,z)$, the number $|\G_{1}(x)\cap\G_1(y)\cap\G_{i-1}(z)|$ is independent of the choice of $z$; (iii) There exist $x \in Y$ and $z\in\G_i(x)$ such that for all $y\in\G_{2,i}(x,z)$ the number $|\G_{1}(x)\cap\G_1(y)\cap\G_{i-1}(z)|$ is independent of the choice of $y$; (iv) There exist $x \in Y$ and $y\in\G_2(x)$ such that for all $z\in\G_{i,i}(x,y)$ the number $|\G_{1}(x)\cap\G_1(y)\cap\G_{i-1}(z)|$ is independent of the choice of $z$. As a corollary we get (1) $\G$ is $2$-$Y$-homogeneous if and only if $\Delta_i=0$ $(2\le i\le\min\{D-1,D'-1\})$; and (2) $\G$ is almost $2$-$Y$-homogeneous if and only if $\Delta_i=0$ $(2\le i\le D-2)$. In Section~\ref{oH} we prove that the scalars $\gamma_i$ are nonzero, when $k'\ge 3$ and $D\ge 3$. In Section~\ref{Eb} we study distance-biregular graphs with $c_2'=1$ and we show that $\G$ is almost $2$-$Y$-homogeneous with $c_2=1$ if and only if $c_i=1$ for every integer $i$ $(1\le i\le D-1)$. In Sections~\ref{Fc} and \ref{Fe} we give possible types for the intersection array for a $2$-$Y$-homogeneous distance-biregular graph, when $D\in\{3,4,5\}$. The proof of Theorem~\ref{rH} is in Section~\ref{Fe}. We finish the paper giving simple examples and some open problems which could be of interest for further research.


\section{Preliminaries}
\label{pa}

An {\it(undirected) graph} $\G$ is a pair $(X, \R)$, where $X$ is a nonempty set and $\R$ is a collection of one or two element subsets of $X$. The elements of $X$ are called the {\it vertices} of $\G$, and the elements of $\R$ are called the {\it edges} of $\G$. A one element subset of $X$ in $\R$ is an edge which starts and ends at the same vertex - it is called a {\it loop}. When $xy\in \R$ $(x\ne y)$, we say that vertices $x$ and $y$ are {\it adjacent}, or that $x$ and $y$ are {\it neighbors}. Adjacency between vertices $x$ and $y$ will be denoted by $x\sim y$. A graph is {\it finite} if both its vertex set and edge set are finite. A graph is {\it simple} if it has no loops and no two of its edges join the same pair of vertices.

Let $\G = (X,\R)$ be a graph. For any two vertices $x, y \in X$, a {\it walk} of length $h$ from $x$ to $y$ is a sequence $[x_0,x_1,x_2,\ldots,x_h]$ $(x_i\in X,\, 0\le i\le h)$ such that $x_0 = x$, $x_h = y$, and $x_i$ is adjacent to $x_{i+1}$ $(0\le i\le h-1)$. We say that $\G$ is {\it connected} if for any $x, y\in X$, there is a walk from $x$ to $y$. A {\it path} is a walk such that all vertices of the walk are distinct. From now on, assume that $\G$ is finite, simple and connected.

For any $x, y\in X$, the {\it distance} between $x$ and $y$, denoted $\partial(x, y)$, is the length of a shortest walk from $x$ to $y$. The {\it diameter} $d = d(\G)$ is defined to be 
$$
d = \max\{\partial(u,v)\,|\,u, v\in X\}.
$$
The {\it eccentricity} of $x$, denoted by $\varepsilon=\varepsilon(x)$, is the maximum distance between $x$ and any other vertex of $\G$. Note that the diameter of $\G$ equals $\max\{\varepsilon(x)\mid x\in X\}$.

A simple graph in which each pair of distinct vertices is joined by an edge is called a {\it complete graph}. The complete graph on $n$ vertices is denoted by $K_n$. A {\it bipartite} (or {\it $(Y,Y')$-bipartite}) graph is one whose vertex set can be partitioned into two subsets $Y$ and $Y'$, so that each edge has one endpoint in $Y$ and another endpoint in $Y'$. The vertex sets $Y$ and $Y'$ in such a partition is called a {\it color partition} (or {\it bipartition}) of the graph. A {\it complete bipartite graph} is a simple bipartite graph with color partitions $Y$ and $Y'$ in which each vertex of $Y$ is joined to each vertex of $Y'$; if $|Y| = m$ and $|Y'|= n$, such a graph is denoted by $K_{m,n}$. A graph $\G$ is {\it regular} with valency $k$ if each vertex in $\G$ has exactly $k$ neighbours.

\subsection{Distance-regularized vertex and distance-regular graph}

The concept of distance appears in all of science, as in our daily lives. In the study of graphs, distances play an important role, and maybe one of the main reasons for it is in their wide applicability. The distance between two vertices in a graph is surprisingly simple and useful notion. It has led to the definition of several graph parameters such as the diameter, the radius, the average distance, the eccentricity of a vertex, the adjacency matrix, the distance-$i$ matrix, as well as to the definition of several graphs classes (distance-balanced, strongly distance-balanced, nicely distance-balance, distance-regular, distance-biregular, distance-transitive, distance-hereditary). Some interesting articles regarding different uses of distances are \cite{BC, EMJS, FKR, KTKR, MRRA, WMS, NP, RO}.

Let $\G$ denote a graph with vertex set $X$ and diameter $d$. For a vertex $x\in X$ and any non-negative integer $i$ not exceeding $d$, let $\G_i(x)$ denote the subset of vertices in $X$ that are at distance $i$ from $x$. Let $\G_{-1}(x) = \G_{d+1}(x) := \emptyset$. For any two vertices $x$ and $y$ in $X$ at distance $i$, let $C_i(x,y):=\G_{i-1}(x)\cap\G_1(y)$, $A_i(x,y):=\G_{i}(x)\cap\G_1(y)$, $B_i(x,y):=\G_{i+1}(x)\cap\G_1(y)$. We say that a vertex $x \in X$ is {\it distance-regularized} (or that a graph $\G$ is {\it distance-regular around $x$}) if the numbers $|A_i(x,y)|$,  $|B_i(x,y)|$ and $|C_i(x,y)|$ do not depend on the choice of $y \in \G_i(x) \; (0 \le i \le d)$; in this case, the numbers  $|A_i(x,y)|$,  $|B_i(x,y)|$ and $|C_i(x,y)|$ are simply denoted  by $a_i(x)$,  $b_i(x)$ and $c_i(x)$ respectively, and are called the {\it intersection numbers of $x$}. A graph $\G$ is called {\it distance-regular} if there are integers $b_i$, $c_i$ $(0\le i\le d)$ which satisfy $c_i = |C_i(x, y)|$ and $b_i = |B_i (x, y)|$ for any two vertices $x$ and $y$ in $X$ at distance $i$. Clearly such a graph is regular of valency $k := b_0$ and  $a_i := |A_i(x,y)| = k - b_i - c_i$ $(0\le i\le d)$ is the number of neighbours of $y$ in $\G_i(x)$ for $x, y\in X$ $(\partial(x,y)=i)$. Note that a graph is distance-regular if each of its vertices is distance-regularized, and if all of its vertices have the same intersection numbers. For more information about distance-regular graphs we refer the reader to \cite{NB, BN, BCN, DKT, FMA, SP}.

\subsection{Equitable partition and intersection diagram}

A {\it partition} of a graph $\G$ is a collection $\{\P_1, \P_2, \dots, \P_s\}$ of  nonempty subsets of $X$, such that $X=\ds\bigcup_{i=1}^s \P_i$ and $\P_i\cap\P_j=\emptyset$ for all $i,j$ $(1 \le i,j \le s,\, i\ne j)$. An {\it equitable partition} of a graph $\G$ is a partition $\{\P_1, \P_2, \dots, \P_s\}$ of its vertex set, such that for all integers $i,j$ $(1 \le i,j \le s)$ the number $c_{ij}$ of neighbours, which a vertex in the cell $\P_i$ has in the cell $\P_j$, is independent of the choice of the vertex in $\P_i$. 

Assume that $\G$ is a graph of diameter $d$, and pick arbitrary vertices $x$ and $y\in\G_h(x)$, for some fixed $h$ $(0\le h\le \varepsilon(x))$. The collection of all nonempty subsets $\G_{i,j}(x,y)$ $(0\le i,j\le d)$ is a partition of the vertex set of $\G$ which is called the {\em intersection diagram of $\G$ of rank $h$}.

\subsection{Distance-biregular graph (DBG)}

A natural continuation in understanding combinatorial and algebraic properties of distance-regular graphs is to study the family of graphs known as distance-biregular graphs. Let $\G$ denote a graph (not necessary regular) with the property that each vertex is distance-regularized (i.e., $\G$ is distance-regular around every vertex). Such a graph is said to be {\it distance-regularized}. A distance-regularized graph is called {\it distance-biregular} if the following (i)--(iii) hold: (i) it is bipartite; (ii) the vertices in the same color partition have the same intersection array; and (iii) the vertices in different color classes have different intersection arrays. A well-known result by {\sc Godsil} and {\sc Shawe-Taylor} \cite{GST} states that if $\G$ is a connected graph that is distance-regular around every vertex, then $\G$ is distance-regular or distance-biregular. The complete bipartite graph $K_{m,n}$ is an example of distance-biregular graph. The subdivision graph of the Petersen graph is an example of $2$-$Y$-homogeneous distance-biregular graph (here $Y$ is the set of vertices of the Petersen graph). We refer to \cite{ DC, FM, Fp, Fs, MST, NK} for further research on distance-biregular graphs.

Let $\G$ denote a $(Y,Y')$-distance-biregular graph with vertex set $X$. Pick $x \in X$ and let $\varepsilon(x)$ denote the eccentricity of $x$. Since $\G$ is bipartite, we have $a_i(x)=0$ for $0 \le i \le \varepsilon(x)$. Note that all vertices from $Y$ ($Y'$, respectively) have the same eccentricity. We denote this common eccentricity by $D$ ($D'$, respectively). Observe that $|D-D'| \le 1$ and the diameter of $\G$ equals $\max\{D,D'\}$. In addition, all vertices from $Y$ ($Y'$, respectively) have the same valency $k$ ($k'$, respectively). Observe that for $0 \leq i \leq D$, $c_i+b_i=k$ if $i$ is even, while $c_i+b_i=k'$ if $i$ is odd.

Note that, if $D=2$ then $\G$ is a complete bipartite graph with $D'\in \{1,2\}$ (if $D'=3$, then for any $x\in Y'$, $\G_3(x)\subseteq Y$, which yields $D\ge 3$, a contradiction). Moreover, if $D=2$ then $\G$ is $2$-$Y$-homogeneous by definition. For the rest of the paper we assume that $D\ge 3$ (which also yields $D'\geq 3$ - just use the same argument as in the previous sentence). From now on, until the end of the paper, we use the following notation.

\begin{notation}\label{GN}{\rm
Let $\G$ denote a $(Y,Y')$-distance-biregular graph with vertex set $X$, and color partitions $Y$ and $Y'$. Let $D$ ($D'$, respectively) denote the eccentricity of vertices from $Y$ ($Y'$, respectively), and assume that $D\ge 3$. For vertices $x_1,x_2,\hdots,x_k\in X$ and non-negative integers $i_1,i_2,\hdots,i_k$ $(0\le i_1,i_2,\hdots,i_k \le d)$ we define
$
\G_{i_1,i_2,\hdots,i_k}(x_1,x_2,\hdots,x_k)=\bigcap_{\ell=1}^k \G_{i_\ell}(x_\ell).
$
For $x\in Y$ and $z\in\G_i(x)$ $(0< i< D)$ we abbreviate $c_0=0$, $c_i=|\G_{i-1,1}(x,z)|$, $b_i=|\G_{i+1,1}(x,z)|$, $b_{D}=0$, $k_i:=|\G_i(x)|$,
and write 
$
(k,b_1,\ldots,b_D;1,c_2,\ldots,c_D)
$
for the intersection array of the color partition $Y$. For $u\in Y'$ and $v\in\G_i(u)$ $(0< i< D')$ we abbreviate $c'_0=0$, $c'_i=|\G_{i-1,1}(u,v)|$, $b'_i=|\G_{i+1,1}(u,v)|$, $b'_{D'}=0$, $k'_i:=|\G_i(u)|$ and write 
$
(k',b'_1,\ldots,b'_D;1,c'_2,\ldots,c'_{D'})
$
for the intersection array of the color partition $Y'$.
}\end{notation}

Note that, for $0 \leq i \leq D'$, $c_i'+b_i'=k'$ if $i$ is even, while $c_i'+b_i'=k$ if $i$ is odd (see Figure~\ref{01}). Using the same intersection diagram, for any $x\in Y$ and $u\in Y'$, we also get
\begin{equation}
\label{Ej}
|\G_i(x)|=\frac{b_0b_1\cdots b_{i-1}}{c_1c_2\cdots c_i}~(1\le i\le D),
\qquad
|\G_i(u)|=\frac{b'_0b'_1\cdots b'_{i-1}}{c'_1c'_2\cdots c'_i}~(1\le i\le D').
\end{equation}

{\small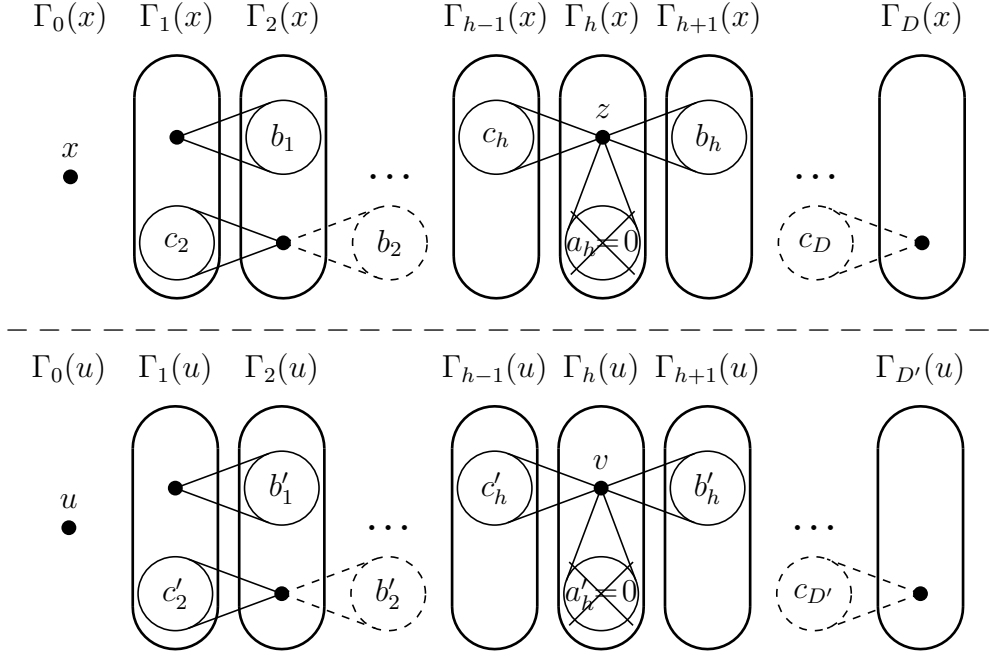
\begin{figure}[t]{\rm
\begin{center}
\begin{tikzpicture}[scale=0.7]
\draw [line width=1pt] (1.2,5)-- (1.2,2);
\draw [line width=1pt] (2.8,5)-- (2.8,2);
\draw [shift={(2,5)},line width=1pt]  plot[domain=0:3.141592653589793,variable=\t]({1*0.8*cos(\t r)+0*0.8*sin(\t r)},{0*0.8*cos(\t r)+1*0.8*sin(\t r)});
\draw [shift={(2,2)},line width=1pt]  plot[domain=3.141592653589793:6.283185307179586,variable=\t]({1*0.8*cos(\t r)+0*0.8*sin(\t r)},{0*0.8*cos(\t r)+1*0.8*sin(\t r)});
\draw [shift={(4,5)},line width=1pt]  plot[domain=0:3.141592653589793,variable=\t]({1*0.8*cos(\t r)+0*0.8*sin(\t r)},{0*0.8*cos(\t r)+1*0.8*sin(\t r)});
\draw [shift={(4,2)},line width=1pt]  plot[domain=3.141592653589793:6.283185307179586,variable=\t]({1*0.8*cos(\t r)+0*0.8*sin(\t r)},{0*0.8*cos(\t r)+1*0.8*sin(\t r)});
\draw [line width=1pt] (3.2,5)-- (3.2,2);
\draw [line width=1pt] (4.8,5)-- (4.8,2);
\draw [line width=1pt] (7.2,5)-- (7.2,2);
\draw [line width=1pt] (8.8,5)-- (8.8,2);
\draw [line width=1pt] (9.2,5)-- (9.2,2);
\draw [line width=1pt] (10.8,5)-- (10.8,2);
\draw [line width=1pt] (11.2,5)-- (11.2,2);
\draw [line width=1pt] (12.8,5)-- (12.8,2);
\draw [line width=1pt] (15.2,5)-- (15.2,2);
\draw [line width=1pt] (16.8,5)-- (16.8,2);
\draw [shift={(8,5)},line width=1pt]  plot[domain=0:3.141592653589793,variable=\t]({1*0.8*cos(\t r)+0*0.8*sin(\t r)},{0*0.8*cos(\t r)+1*0.8*sin(\t r)});
\draw [shift={(10,5)},line width=1pt]  plot[domain=0:3.141592653589793,variable=\t]({1*0.8*cos(\t r)+0*0.8*sin(\t r)},{0*0.8*cos(\t r)+1*0.8*sin(\t r)});
\draw [shift={(12,5)},line width=1pt]  plot[domain=0:3.141592653589793,variable=\t]({1*0.8*cos(\t r)+0*0.8*sin(\t r)},{0*0.8*cos(\t r)+1*0.8*sin(\t r)});
\draw [shift={(16,5)},line width=1pt]  plot[domain=0:3.141592653589793,variable=\t]({1*0.8*cos(\t r)+0*0.8*sin(\t r)},{0*0.8*cos(\t r)+1*0.8*sin(\t r)});
\draw [shift={(16,2)},line width=1pt]  plot[domain=3.141592653589793:6.283185307179586,variable=\t]({1*0.8*cos(\t r)+0*0.8*sin(\t r)},{0*0.8*cos(\t r)+1*0.8*sin(\t r)});
\draw [shift={(12,2)},line width=1pt]  plot[domain=3.141592653589793:6.283185307179586,variable=\t]({1*0.8*cos(\t r)+0*0.8*sin(\t r)},{0*0.8*cos(\t r)+1*0.8*sin(\t r)});
\draw [shift={(10,2)},line width=1pt]  plot[domain=3.141592653589793:6.283185307179586,variable=\t]({1*0.8*cos(\t r)+0*0.8*sin(\t r)},{0*0.8*cos(\t r)+1*0.8*sin(\t r)});
\draw [shift={(8,2)},line width=1pt]  plot[domain=3.141592653589793:6.283185307179586,variable=\t]({1*0.8*cos(\t r)+0*0.8*sin(\t r)},{0*0.8*cos(\t r)+1*0.8*sin(\t r)});
\draw [line width=.6pt] (8,4.25) circle (0.7cm);
\draw [line width=.6pt] (12,4.25) circle (0.7cm);
\draw [line width=.6pt] (10,2.25) circle (0.7cm);
\draw [line width=.6pt] (10,4.25)-- (8.245,4.905724789831837);
\draw [line width=.6pt] (10,4.25)-- (8.245,3.594275210168163);
\draw [line width=.6pt] (10,4.25)-- (11.755,4.905724789831837);
\draw [line width=.6pt] (10,4.25)-- (11.755,3.5942752101681634);
\draw [line width=.6pt] (10,4.25)-- (9.344275210168169,2.495);
\draw [line width=.6pt] (10,4.25)-- (10.655724789831831,2.495);
\node at (6,3.5) {$\boldsymbol\ldots$};
\node at (14,3.5) {$\boldsymbol\ldots$};
\node at (0,6.5) {$\G_0(x)$};
\node at (2,6.5) {$\G_1(x)$};
\node at (4,6.5) {$\G_2(x)$};
\node at (8,6.5) {$\G_{h-1}(x)$};
\node at (10,6.5) {$\G_{h}(x)$};
\node at (12,6.5) {$\G_{h+1}(x)$};
\node at (16,6.5) {$\G_{D}(x)$};
\node at (0,4) {$x$};
\fill (0,3.5) circle [radius=0.14];
\node at (8,4.25) {$c_h$};
\node at (12,4.25) {$b_h$};
\node at (10,2.25) {$a_h\!\!=\!0$};
\draw [line width=.6pt] (9.4,1.65)-- (10.6,2.85);
\draw [line width=.6pt] (9.4,2.85)-- (10.6,1.65);
\node at (10,4.75) {$z$};
\fill (10,4.25) circle [radius=0.14];

\fill (2,4.25) circle [radius=0.14];
\draw [line width=.6pt] (4,4.25) circle (0.7cm);
\draw [line width=.6pt] (2,4.25)-- (3.755,4.905724789831837);
\draw [line width=.6pt] (2,4.25)-- (3.755,3.5942752101681634);
\node at (4,4.25) {$b_1$};

\draw [line width=.6pt] (2,2.25) circle (0.7cm);
\fill (4,2.25) circle [radius=0.14];
\draw [line width=.6pt] (4,2.25)-- (2.245,2.905724789831837);
\draw [line width=.6pt] (4,2.25)-- (2.245,1.594275210168163);
\node at (2,2.25) {$c_2$};
\draw [line width=.6pt, dashed] (6,2.25) circle (0.7cm);
\draw [line width=.6pt, dashed] (4,2.25)-- (5.755,2.905724789831837);
\draw [line width=.6pt, dashed] (4,2.25)-- (5.755,1.5942752101681634);
\node at (6,2.25) {$b_2$};

\draw [line width=.6pt, dashed] (14,2.25) circle (0.7cm);
\fill (16,2.25) circle [radius=0.14];
\draw [line width=.6pt, dashed] (16,2.25)-- (14.245,2.905724789831837);
\draw [line width=.6pt, dashed] (16,2.25)-- (14.245,1.594275210168163);
\node at (14,2.25) {$c_D$};
\end{tikzpicture}
$-------------------------------$
\begin{tikzpicture}[scale=0.7]
\draw [line width=1pt] (1.2,5)-- (1.2,2);
\draw [line width=1pt] (2.8,5)-- (2.8,2);
\draw [shift={(2,5)},line width=1pt]  plot[domain=0:3.141592653589793,variable=\t]({1*0.8*cos(\t r)+0*0.8*sin(\t r)},{0*0.8*cos(\t r)+1*0.8*sin(\t r)});
\draw [shift={(2,2)},line width=1pt]  plot[domain=3.141592653589793:6.283185307179586,variable=\t]({1*0.8*cos(\t r)+0*0.8*sin(\t r)},{0*0.8*cos(\t r)+1*0.8*sin(\t r)});
\draw [shift={(4,5)},line width=1pt]  plot[domain=0:3.141592653589793,variable=\t]({1*0.8*cos(\t r)+0*0.8*sin(\t r)},{0*0.8*cos(\t r)+1*0.8*sin(\t r)});
\draw [shift={(4,2)},line width=1pt]  plot[domain=3.141592653589793:6.283185307179586,variable=\t]({1*0.8*cos(\t r)+0*0.8*sin(\t r)},{0*0.8*cos(\t r)+1*0.8*sin(\t r)});
\draw [line width=1pt] (3.2,5)-- (3.2,2);
\draw [line width=1pt] (4.8,5)-- (4.8,2);
\draw [line width=1pt] (7.2,5)-- (7.2,2);
\draw [line width=1pt] (8.8,5)-- (8.8,2);
\draw [line width=1pt] (9.2,5)-- (9.2,2);
\draw [line width=1pt] (10.8,5)-- (10.8,2);
\draw [line width=1pt] (11.2,5)-- (11.2,2);
\draw [line width=1pt] (12.8,5)-- (12.8,2);
\draw [line width=1pt] (15.2,5)-- (15.2,2);
\draw [line width=1pt] (16.8,5)-- (16.8,2);
\draw [shift={(8,5)},line width=1pt]  plot[domain=0:3.141592653589793,variable=\t]({1*0.8*cos(\t r)+0*0.8*sin(\t r)},{0*0.8*cos(\t r)+1*0.8*sin(\t r)});
\draw [shift={(10,5)},line width=1pt]  plot[domain=0:3.141592653589793,variable=\t]({1*0.8*cos(\t r)+0*0.8*sin(\t r)},{0*0.8*cos(\t r)+1*0.8*sin(\t r)});
\draw [shift={(12,5)},line width=1pt]  plot[domain=0:3.141592653589793,variable=\t]({1*0.8*cos(\t r)+0*0.8*sin(\t r)},{0*0.8*cos(\t r)+1*0.8*sin(\t r)});
\draw [shift={(16,5)},line width=1pt]  plot[domain=0:3.141592653589793,variable=\t]({1*0.8*cos(\t r)+0*0.8*sin(\t r)},{0*0.8*cos(\t r)+1*0.8*sin(\t r)});
\draw [shift={(16,2)},line width=1pt]  plot[domain=3.141592653589793:6.283185307179586,variable=\t]({1*0.8*cos(\t r)+0*0.8*sin(\t r)},{0*0.8*cos(\t r)+1*0.8*sin(\t r)});
\draw [shift={(12,2)},line width=1pt]  plot[domain=3.141592653589793:6.283185307179586,variable=\t]({1*0.8*cos(\t r)+0*0.8*sin(\t r)},{0*0.8*cos(\t r)+1*0.8*sin(\t r)});
\draw [shift={(10,2)},line width=1pt]  plot[domain=3.141592653589793:6.283185307179586,variable=\t]({1*0.8*cos(\t r)+0*0.8*sin(\t r)},{0*0.8*cos(\t r)+1*0.8*sin(\t r)});
\draw [shift={(8,2)},line width=1pt]  plot[domain=3.141592653589793:6.283185307179586,variable=\t]({1*0.8*cos(\t r)+0*0.8*sin(\t r)},{0*0.8*cos(\t r)+1*0.8*sin(\t r)});
\draw [line width=.6pt] (8,4.25) circle (0.7cm);
\draw [line width=.6pt] (12,4.25) circle (0.7cm);
\draw [line width=.6pt] (10,2.25) circle (0.7cm);
\draw [line width=.6pt] (10,4.25)-- (8.245,4.905724789831837);
\draw [line width=.6pt] (10,4.25)-- (8.245,3.594275210168163);
\draw [line width=.6pt] (10,4.25)-- (11.755,4.905724789831837);
\draw [line width=.6pt] (10,4.25)-- (11.755,3.5942752101681634);
\draw [line width=.6pt] (10,4.25)-- (9.344275210168169,2.495);
\draw [line width=.6pt] (10,4.25)-- (10.655724789831831,2.495);
\node at (6,3.5) {$\boldsymbol\ldots$};
\node at (14,3.5) {$\boldsymbol\ldots$};
\node at (0,6.5) {$\G_0(u)$};
\node at (2,6.5) {$\G_1(u)$};
\node at (4,6.5) {$\G_2(u)$};
\node at (8,6.5) {$\G_{h-1}(u)$};
\node at (10,6.5) {$\G_{h}(u)$};
\node at (12,6.5) {$\G_{h+1}(u)$};
\node at (16,6.5) {$\G_{D'}(u)$};
\node at (0,4) {$u$};
\fill (0,3.5) circle [radius=0.14];
\node at (8,4.25) {$c_h'$};
\node at (12,4.25) {$b_h'$};
\node at (10,2.25) {$a_h'\!\!=\!0$};
\draw [line width=.6pt] (9.4,1.65)-- (10.6,2.85);
\draw [line width=.6pt] (9.4,2.85)-- (10.6,1.65);
\node at (10,4.75) {$v$};
\fill (10,4.25) circle [radius=0.14];

\fill (2,4.25) circle [radius=0.14];
\draw [line width=.6pt] (4,4.25) circle (0.7cm);
\draw [line width=.6pt] (2,4.25)-- (3.755,4.905724789831837);
\draw [line width=.6pt] (2,4.25)-- (3.755,3.5942752101681634);
\node at (4,4.25) {$b_1'$};

\draw [line width=.6pt] (2,2.25) circle (0.7cm);
\fill (4,2.25) circle [radius=0.14];
\draw [line width=.6pt] (4,2.25)-- (2.245,2.905724789831837);
\draw [line width=.6pt] (4,2.25)-- (2.245,1.594275210168163);
\node at (2,2.25) {$c_2'$};

\draw [line width=.6pt, dashed] (6,2.25) circle (0.7cm);
\draw [line width=.6pt, dashed] (4,2.25)-- (5.755,2.905724789831837);
\draw [line width=.6pt, dashed] (4,2.25)-- (5.755,1.5942752101681634);
\node at (6,2.25) {$b_2'$};

\draw [line width=.6pt, dashed] (14,2.25) circle (0.7cm);
\fill (16,2.25) circle [radius=0.14];
\draw [line width=.6pt, dashed] (16,2.25)-- (14.245,2.905724789831837);
\draw [line width=.6pt, dashed] (16,2.25)-- (14.245,1.594275210168163);
\node at (14,2.25) {$c_{D'}$};
\end{tikzpicture}
\caption{\rm 
Intersection diagrams (of rank $0$) of a $(Y,Y')$-distance-biregular graph, with respect to $x\in Y$ and $u\in Y'$, and a graphical representation of the intersection numbers.
}
\label{01}
\end{center}
}\end{figure}}

In Lemma~\ref{4w} we recall some relations between the intersection numbers of a distance-biregular graph. The proof of Lemma~\ref{jH} is an easy exercise. Both of these lemmas will be used later in the paper.

\begin{lemma}[{{\rm\cite[Proposition~2]{DC}}}]
\label{4w}
With reference to Notation~\ref{GN}, the following hold.
\begin{enumerate}[label={\rm(\roman*)}]
\item $c_i'\leq c_{i+1} \; (1\leq i\leq D-1) $ and $c_i\leq c'_{i+1} \; (1\leq i\leq D'-1) $.
\item $b_i\leq b'_{i-1} \; (1\leq i\leq D-1) $ and $b'_i\leq b_{i-1} \; (1\leq i\leq D'-1) $.
\end{enumerate}
\end{lemma}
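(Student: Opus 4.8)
The plan is to derive both families of inequalities from a single elementary fact about distances in a bipartite graph: if $p\sim q$ is an edge and $s$ is a vertex whose distances to $p$ and to $q$ differ by exactly $1$, then a neighbour of $s$ that is one step closer to the near endpoint is automatically at the ``right'' distance from the far endpoint. Because $\G$ is distance-regularized around \emph{every} vertex, each intersection number is realized by every admissible configuration; hence, to prove a given inequality, it suffices to exhibit \emph{one} configuration in which the smaller quantity is literally a subset-count of the larger one. I would isolate this as a set-inclusion lemma and then specialize it four times.

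For part~(i), take the claim $c_i'\le c_{i+1}$ with $1\le i\le D-1$. First I would root at $p\in Y$; since $\varepsilon(p)=D\ge i+1$ there is a vertex $s\in\G_{i+1}(p)$, and taking $q$ to be the first vertex on a geodesic from $p$ to $s$ gives $q\sim p$, $q\in Y'$, and $\partial(q,s)=i$. The key step is the inclusion
\[
\G_{i-1}(q)\cap\G_1(s)\ \subseteq\ \G_{i}(p)\cap\G_1(s),
\]
valid because any $t\sim s$ with $\partial(q,t)=i-1$ satisfies $i=(i+1)-1\le\partial(p,t)\le 1+(i-1)=i$ by applying the triangle inequality through $s$ and through $q$. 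The left-hand set has size $c_i'$ (as $q\in Y'$ and $\partial(q,s)=i$) and the right-hand set has size $c_{i+1}$ (as $p\in Y$ and $\partial(p,s)=i+1$), so $c_i'\le c_{i+1}$. The companion inequality $c_i\le c_{i+1}'$ is identical after interchanging the two colour classes: root at $p\in Y'$, so that $\G_{i+1}(p)\neq\emptyset$ because $i+1\le D'$, take $q\in Y$ as the first geodesic step, and read off the sizes.

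For part~(ii) I would run the mirror-image argument for the $b$'s. To prove $b_i\le b_{i-1}'$ with $1\le i\le D-1$, root at $q\in Y$, choose $s\in\G_i(q)$ (possible since $i\le D$) and a neighbour $p\in Y'$ of $q$ with $\partial(p,s)=i-1$ (possible since trivially $c_i\ge 1$ for a vertex at distance $i\ge 1$). The relevant inclusion is now
\[
\G_{i+1}(q)\cap\G_1(s)\ \subseteq\ \G_{i}(p)\cap\G_1(s),
\]
valid because any $t\sim s$ with $\partial(q,t)=i+1$ has $i=(i+1)-1\le\partial(p,t)\le(i-1)+1=i$. Here the left-hand set has size $b_i$ and the right-hand set size $b_{i-1}'$, giving $b_i\le b_{i-1}'$; swapping colours proves $b_i'\le b_{i-1}$ in the range $1\le i\le D'-1$.

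The only genuine subtlety, and the step I would treat most carefully, is the \emph{existence} of the auxiliary vertex $s$ at the prescribed distance: the set-inclusions themselves are immediate from the triangle inequality, but they are useful only once a realizing configuration is known to exist. This is exactly why each inequality is set up by rooting at the colour class whose eccentricity is large enough to reach the required distance; since $|D-D'|\le 1$, one must check in each of the four cases that the chosen root indeed has the needed distance class nonempty throughout the stated range of $i$, and it is the bipartite alternation of colours along geodesics that forces the primed and unprimed labels into the interleaved pattern asserted by the lemma. I would close by noting that these inequalities are precisely the bipartite analogues of the classical monotonicities $c_i\le c_{i+1}$ and $b_i\ge b_{i+1}$ for distance-regular graphs, with the alternation of primes reflecting that consecutive distance classes from a fixed vertex lie in alternating colour classes.
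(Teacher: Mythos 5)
Your proof is correct. There is, however, nothing in the paper to compare it against: the paper does not prove Lemma~\ref{4w}, it simply imports it from the literature as \cite[Proposition~2]{DC}. Your self-contained argument is sound in all four cases: the key device of realizing the smaller intersection number as the cardinality of a subset of the set counted by the larger one works because the two-sided triangle inequality (through $s$ and through the edge $pq$) pins $\partial(p,t)$ to exactly the required value, and you correctly track which colour class the base vertex lies in, so each of the four counts receives the right primed or unprimed label; the existence of each configuration is secured by rooting at the colour class whose eccentricity covers the stated range of $i$ (with $|D-D'|\le 1$ guaranteeing that the auxiliary intersection numbers are defined). It is worth noting that this is precisely the style of argument the paper itself uses for the neighbouring Lemma~\ref{jH}, whose one-line proof is also a subset-count of the form $c_i=|\G_{i-1,1}(u,v)|=|\G_{i-1,1,j+1}(u,v,w)|\le|\G_{1,j+1}(v,w)|=b_j$, so your proof fits the paper's methods; what it buys is that the lemma becomes self-contained rather than a citation. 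Two cosmetic points. First, in part~(ii) your parenthetical justification for the existence of $p$ (``since trivially $c_i\ge 1$'') is slightly off: the set $\G_{i-1}(s)\cap\G_1(q)$ has size $c_i$ or $c_i'$ according to the parity of $i$, so the clean statement is simply that $p$ may be taken as the first vertex of a geodesic from $q$ to $s$, exactly as you phrase it in part~(i). Second, for $i=1$ in part~(ii) your vertex $p$ coincides with $s$ and the right-hand count is $b_0'=k'$; the argument still goes through, but the degenerate case deserves a word.
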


\begin{lemma}
\label{jH}
With reference to Notation~\ref{GN}, if $i+j\le D$ and $i+j$ is an even number, then $c_i\le b_j$.
\end{lemma}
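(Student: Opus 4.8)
The plan is to adapt the classical geodesic argument that yields $c_i\le b_j$ in the distance-regular setting, while carefully tracking which color class each vertex lands in. The boundary case $i=0$ is immediate since $c_0=0$, so I would assume $i\ge 1$, which forces $j\le D-1$ whenever $j\ge 1$, and I will see below that the value $j=0$ is harmless as well. The idea is to exhibit a single configuration of three vertices from which the inequality drops out by counting neighbours of a fixed vertex.

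First I would fix $x\in Y$. Since the eccentricity of every vertex of $Y$ equals $D$ and $i+j\le D$, there exists a vertex $y\in\G_{i+j}(x)$. The key structural observation is that, because $\G$ is bipartite and $i+j$ is \emph{even}, the vertex $y$ lies in the same color class as $x$, so $y\in Y$; this is the only point where the parity hypothesis is used, and it is exactly what guarantees that the quantity $b_j$ produced below is the $Y$-intersection number rather than the $Y'$-one. I would then pick a shortest $x$--$y$ path and let $z$ be its vertex at distance $i$ from $x$, so that $\partial(x,z)=i$ and $\partial(z,y)=j$.

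The heart of the proof is the inclusion $C_i(x,z)\subseteq B_j(y,z)$. For any $w\in C_i(x,z)=\G_{i-1}(x)\cap\G_1(z)$, adjacency $w\sim z$ together with $\partial(z,y)=j$ gives $\partial(w,y)\le j+1$, while the triangle inequality gives $\partial(w,y)\ge \partial(x,y)-\partial(x,w)=(i+j)-(i-1)=j+1$; hence $\partial(w,y)=j+1$ and $w\in\G_{j+1}(y)\cap\G_1(z)=B_j(y,z)$. Taking cardinalities, and using that both $x$ and $y$ belong to $Y$ so that $|C_i(x,z)|=c_i$ and $|B_j(y,z)|=b_j$, yields $c_i\le b_j$. (When $j=0$ one has $z=y$, $B_0(y,z)=\G_1(y)$ of size $b_0=k$, and the same inclusion gives $c_i\le k$, so no separate treatment is needed.)

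The computation is entirely routine; the only genuine subtlety, and the reason to present the argument with care, is the bookkeeping of color classes peculiar to the distance-biregular case. The inclusion $C_i(x,z)\subseteq B_j(y,z)$ is useful only if $|B_j(y,z)|$ coincides with the $Y$-number $b_j$, and this is ensured precisely by $y\in Y$, which in turn relies on $i+j$ being even. An odd value of $i+j$ would instead place $y$ in $Y'$, so the same argument would compare $c_i$ with $b_j'$ rather than with $b_j$; thus the parity condition is not a technical convenience but the structural feature that makes the stated conclusion hold.
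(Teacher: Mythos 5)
Your proof is correct and is essentially the paper's own argument: your triple $(x,z,y)$ is the paper's $(u,v,w)$, and your key inclusion $C_i(x,z)\subseteq B_j(y,z)$ is exactly the paper's one-line chain $c_i=|\G_{i-1,1}(u,v)|=|\G_{i-1,1,j+1}(u,v,w)|\le |\G_{1,j+1}(v,w)|=b_j$. The additional bookkeeping you supply (existence of the configuration, the boundary cases $i=0$ and $j=0$, and the role of parity in forcing $y\in Y$) is sound and merely makes explicit what the paper leaves implicit.
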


\begin{proof}
Fix $u\in Y$, $v\in\G_{i}(u)$, $w\in\G_{i+j,j}(u,v)$ and note that $c_i=|\G_{i-1,1}(u,v)|=|\G_{i-1,1,j+1}(u,v,w)|\le |\G_{1,j+1}(v,w)|=b_j$.
\end{proof}

{\small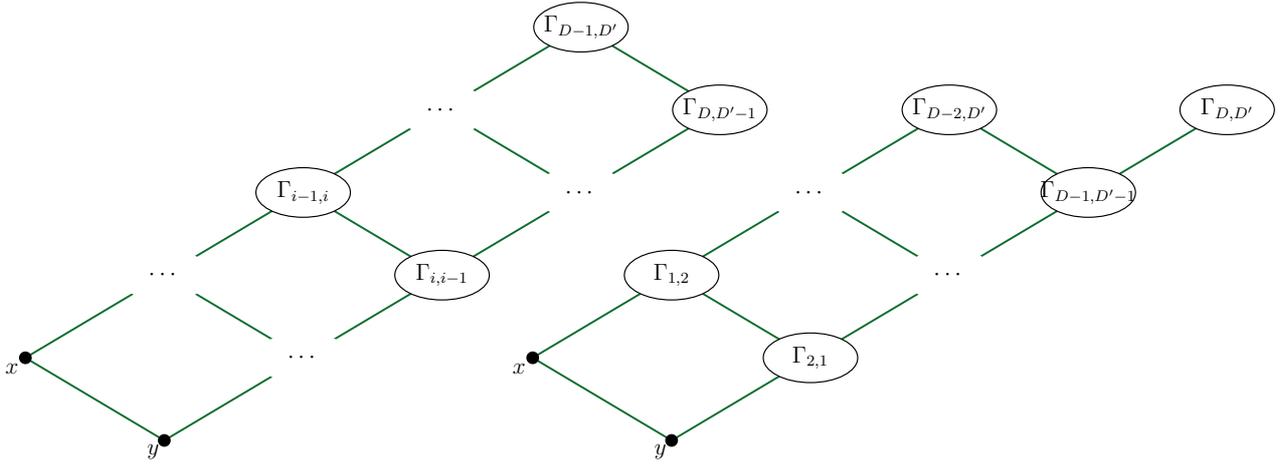
\begin{figure}[t]
{\rm
\begin{center}
\adjustbox{width=\textwidth,keepaspectratio}{
\begin{tikzpicture}[scale=.5]
\draw [line width=1pt, draw=ForestGreen] (-2.,-7.)-- (18.,5.);
\draw [line width=1pt, draw=ForestGreen] (3.,-10.)-- (23.,2.);
\draw [line width=1pt, draw=ForestGreen] (-2.,-7.)-- (3.,-10.);
\draw [line width=1pt, draw=ForestGreen] (3.,-4.)-- (8.,-7.);
\draw [line width=1pt, draw=ForestGreen] (8.,-1.)-- (13.,-4.);
\draw [line width=1pt, draw=ForestGreen] (13.,2.)-- (18.,-1.);
\draw [line width=1pt, draw=ForestGreen] (18.,5.)-- (23.,2.);
\draw[fill=white, draw=black, line width=0.6pt] (18.,5.) ellipse (1.7cm and .9cm);
\draw[fill=white, draw=white, line width=0.6pt] (13.,2.) ellipse (1.7cm and .9cm);
\draw[fill=white, draw=black, line width=0.6pt] (23.,2.) ellipse (1.7cm and .9cm);
\draw[fill=white, draw=black, line width=0.6pt] (8.,-1.) ellipse (1.7cm and .9cm);
\draw[fill=white, draw=white, line width=0.6pt] (18.,-1.) ellipse (1.7cm and .9cm);
\draw[fill=white, draw=black, line width=0.6pt] (13.,-4.) ellipse (1.7cm and .9cm);
\draw[fill=white, draw=white, line width=0.6pt] (8.,-7.) ellipse (1.7cm and .9cm);
\draw[fill=white, draw=white, line width=0.6pt] (3.,-4.) ellipse (1.7cm and .9cm);
\fill (-2,-7) circle [radius=0.23];
\fill (3,-10) circle [radius=0.23];
\node at (-2.1,-7,1) {\normalsize $x$};
\node at (2.6,-10.4) {\normalsize $y$};
\node at (3,-4.) {\normalsize $\cdots$};
\node at (13,2.) {\normalsize $\cdots$};
\node at (8,-7.) {\normalsize $\cdots$};
\node at (8,-1.) {\normalsize $\G_{i-1,i}$};
\node at (23,2.) {\normalsize $\G_{D,D'-1}$};
\node at (18,5.) {\normalsize $\G_{D-1,D'}$};;
\node at (18,-1.) {\normalsize $\cdots$};
\node at (13,-4.) {\normalsize $\G_{i,i-1}$};
\end{tikzpicture}~\hspace{-50mm}~\begin{tikzpicture}[scale=.5]
\draw [line width=1pt, draw=ForestGreen] (-2.,-7.)-- (13,2.);
\draw [line width=1pt, draw=ForestGreen] (3.,-10.)-- (23.,2.);
\draw [line width=1pt, draw=ForestGreen] (-2.,-7.)-- (3.,-10.);
\draw [line width=1pt, draw=ForestGreen] (3.,-4.)-- (8.,-7.);
\draw [line width=1pt, draw=ForestGreen] (8.,-1.)-- (13.,-4.);
\draw [line width=1pt, draw=ForestGreen] (13.,2.)-- (18.,-1.);
\draw[fill=white, draw=black, line width=0.6pt] (13.,2.) ellipse (1.7cm and .9cm);
\draw[fill=white, draw=black, line width=0.6pt] (23.,2.) ellipse (1.7cm and .9cm);
\draw[fill=white, draw=white, line width=0.6pt] (8.,-1.) ellipse (1.7cm and .9cm);
\draw[fill=white, draw=black, line width=0.6pt] (18.,-1.) ellipse (1.7cm and .9cm);
\draw[fill=white, draw=white, line width=0.6pt] (13.,-4.) ellipse (1.7cm and .9cm);
\draw[fill=white, draw=black, line width=0.6pt] (8.,-7.) ellipse (1.7cm and .9cm);
\draw[fill=white, draw=black, line width=0.6pt] (3.,-4.) ellipse (1.7cm and .9cm);

\fill (-2,-7) circle [radius=0.23];
\fill (3,-10) circle [radius=0.23];
\node at (-2.1,-7,1) {\normalsize $x$};
\node at (2.6,-10.4) {\normalsize $y$};
\node at (3,-4.) {\normalsize $\G_{1,2}$};
\node at (13,2.) {\normalsize $\G_{D-2,D'}$};

\node at (8,-7.) {\normalsize $\G_{2,1}$};
\node at (8,-1.) {\normalsize $\cdots$};
\node at (23,2.) {\normalsize $\G_{D,D'}$};
\node at (18,-1.) {\normalsize $\G_{D-1,D'-1}$};
\node at (13,-4.) {\normalsize $\cdots$};
\end{tikzpicture}
}
\caption{\rm 
Intersection diagram (of rank $1$) of a $(Y,Y')$-distance-biregular graph with respect to $x\in Y$ and $y\in\G_{1}(x)\subseteq Y'$, where: (a) $\G_{i,j}=\G_{i,j}(x,y)$ $(0\le i,j\le D)$ and $D=D'$ (the left-hand side); (b) $D-1=D'$ (the right-hand side).
}
\label{09}
\end{center}
}\end{figure}}

\subsection{The intersection diagram of rank $\boldsymbol{1}$ of DBG}

For the moment, pick an odd $i$ $(3\le i\le\min\{D,D'\})$, $x\in Y$ and $y\in\G_i(x)$. There are $c_ic_{i-1}\cdots c_1$ different paths of length $i$ between $x$ and $y$. Since $y \in \G_i(x)\subset Y'$ the number of $xy$-paths of length $i$ is also equal to $c'_ic'_{i-1}\cdots c'_1$. Thus
\begin{equation}
\label{Dk}
c_1c_2\cdots c_i=c'_1c'_2\cdots c'_i
\qquad\mbox{for any odd $i$ } (3\le i\le\min\{D,D'\}).
\end{equation}
Similarly, if we count the number of ordered pairs $(x,y)$ such that $x \in Y$ and $y \in \G_i(x)$ in two different ways, we have $|Y|k_i=|Y'|k_i'$. In particular, $|Y|k=|Y'|k'$. Using \eqref{Ej} and \eqref{Dk}, we get
\begin{equation}
\label{Du}
b_1b_2\cdots b_{i-1}=b'_1b'_2\cdots b'_{i-1}
\qquad\mbox{for any odd $i$ } (3\le i\le\min\{D,D'\})
\end{equation}
(see also \cite[Proposition~3]{DC}).

In addition, from the intersection diagram of rank 1 (see Figure~\ref{09}), every vertex from $\G_{1,2}(x,y)$ has exactly $c'_2-1$ neighbours in $\G_{2,1}(x,y)$, and every vertex in $\G_{2,1}(x,y)$ is adjacent to $c_2-1$ vertices in $\G_{1,2}(x,y)$ (for more information about it, see \cite[Lemma 4.3, Lemma 4.4, Lemma 4.5]{FM}). This yields
\begin{equation}
\label{r5}
b_1(c_2-1)=b'_1(c'_2-1). 
\end{equation}
Using the intersection diagram (of rank $1$) from Figure~\ref{09} (see also \cite[Section~4]{FM}), it is routine to compute 
\begin{align}
|\G_{1,0}(x,y)|=1, \qquad& |\G_{i+1,i}(x,y)|=\frac{b_1b_2\cdots b_{i}}{c'_1c'_2\cdots c'_{i}}\ne 0 \qquad(1\le i\le D-1),\label{r2}\\
|\G_{0,1}(x,y)|=1, \qquad& |\G_{i,i+1}(x,y)|=\frac{b'_1b'_2\cdots b'_{i}}{c_1c_2\cdots c_{i}}\ne 0 \qquad(1\le i\le D'-1). \label{r4}
\end{align}
which yields that these two sets are nonempty. All six equations \eqref{Ej}--\eqref{r4} we use later in the paper.

\begin{proposition}
\label{Eo}
With reference to Notation~\ref{GN}, let $\G$ denote $(Y,Y')$-distance-biregular graph with $D\ge 3$. For every integer $i$ $(1\leq i \leq \min\left\lbrace{D-1, D'-1}\right\rbrace)$ the following hold:
\begin{align}
c_{i+1}'=c_i
&\qquad\Lra\qquad
c_{i+1}=c'_i,\label{Es}\\
c_{i+1}'>c_i
&\qquad\Lra\qquad
c_{i+1}>c'_i.\label{Et}
\end{align}
\end{proposition}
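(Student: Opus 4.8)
The plan is to collapse both equivalences into one and then feed the product identities \eqref{Dk} and \eqref{Du} into a parity case-split. First, by Lemma~\ref{4w}(i) we have $c_i\le c'_{i+1}$ and $c'_i\le c_{i+1}$ throughout the range $1\le i\le e-1$, where $e:=\min\{D,D'\}$; hence $\alpha:=c'_{i+1}-c_i$ and $\beta:=c_{i+1}-c'_i$ are both nonnegative. Thus for each gap the alternatives ``$=0$'' and ``$>0$'' are exhaustive and mutually exclusive, so it suffices to prove the single equivalence $\alpha=0\Leftrightarrow\beta=0$: this is exactly \eqref{Es}, and negating it inside each nonnegative gap gives \eqref{Et}.

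Next I would convert \eqref{Dk} and \eqref{Du} into local two-term identities. Writing $P_j=c_1\cdots c_j$ and $P'_j=c'_1\cdots c'_j$, equation \eqref{Dk} (with the trivial $P_1=P'_1=1$) gives $P_j=P'_j$ for all odd $j$ with $1\le j\le e$. For even $i$ the indices $i-1,i+1$ are odd and at most $e$, so dividing $P_{i+1}=P'_{i+1}$ by $P_{i-1}=P'_{i-1}$ yields $c_ic_{i+1}=c'_ic'_{i+1}$; substituting $c_{i+1}=c'_i+\beta$, $c'_{i+1}=c_i+\alpha$ and cancelling gives $c_i\beta=c'_i\alpha$, and since $c_i,c'_i\ge1$ we get $\alpha=0\Leftrightarrow\beta=0$. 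For odd $i$ with $i+2\le e$ the same scheme applied to \eqref{Du} gives $b_1\cdots b_{i+1}=b'_1\cdots b'_{i+1}$ and $b_1\cdots b_{i-1}=b'_1\cdots b'_{i-1}$, hence $b_ib_{i+1}=b'_ib'_{i+1}$; using $c_i+b_i=k'$, $c'_i+b'_i=k$ (for odd $i$) and $c_{i+1}+b_{i+1}=k$, $c'_{i+1}+b'_{i+1}=k'$ (for even $i+1$) one rewrites $\alpha=b_i-b'_{i+1}$ and $\beta=b'_i-b_{i+1}$, so the product identity collapses to $b_i\beta=b'_i\alpha$; since $i<e\le\min\{D,D'\}$ forces $b_i,b'_i\ne0$, again $\alpha=0\Leftrightarrow\beta=0$.

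The main obstacle is the boundary index $i=e-1$ with $i$ odd, where $i+2>e$ and the identity from \eqref{Du} degenerates (one or both of $b_{i+1},b'_{i+1}$ vanish). Here I would first record a parity lemma: \emph{if $D\ne D'$ then $e$ is odd}. Indeed, if $D<D'$ then $D'=D+1$, and a vertex $w\in\G_{D'}(u)$ with $u\in Y'$ would lie in $Y$ whenever $D+1$ is odd, forcing $\varepsilon(w)\ge\partial(w,u)=D+1>D$, contradicting $\varepsilon(w)=D$; hence $D+1$ is even and $e=D$ is odd, and the case $D'<D$ is symmetric. Consequently the odd boundary $i=e-1$ can occur only when $D=D'=e$ is even, and then $b_e=b'_e=0$, so $c_e=k$ and $c'_e=k'$; a direct computation gives $\alpha=c'_e-c_{e-1}=b_{e-1}$ and $\beta=c_e-c'_{e-1}=b'_{e-1}$, both nonzero since $e-1<e=D=D'$. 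Thus $\alpha>0$ and $\beta>0$ simultaneously, so the equivalence holds at the boundary as well, which finishes the proof.
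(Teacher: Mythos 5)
Your proof is correct, but it takes a genuinely different route from the paper's. The paper proves both \eqref{Es} and \eqref{Et} in one stroke by a double count inside the rank-$1$ intersection diagram: for $x\in Y$ and $z\in\G_1(x)$, every vertex of $\G_{i,i+1}(x,z)$ has exactly $c'_{i+1}-c_i$ neighbours in $\G_{i+1,i}(x,z)$, every vertex of $\G_{i+1,i}(x,z)$ has exactly $c_{i+1}-c'_i$ neighbours in $\G_{i,i+1}(x,z)$, and hence $(c'_{i+1}-c_i)\,|\G_{i,i+1}(x,z)|=(c_{i+1}-c'_i)\,|\G_{i+1,i}(x,z)|$; since both cells are nonempty by \eqref{r2} and \eqref{r4}, the signs of the two gaps must agree, uniformly in $i$, with no parity split and no boundary case. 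You instead first reduce \eqref{Et} to \eqref{Es} via nonnegativity of the gaps (Lemma~\ref{4w}(i)), and then extract local two-term identities from the global product formulas: $c_ic_{i+1}=c'_ic'_{i+1}$ from \eqref{Dk} for even $i$, and $b_ib_{i+1}=b'_ib'_{i+1}$ from \eqref{Du} for odd $i$ with $i+2\le e$, giving $c_i\beta=c'_i\alpha$, respectively $b_i\beta=b'_i\alpha$, with positive coefficients. All steps check out, including the one delicate point you rightly isolate: the odd boundary index $i=e-1$, where \eqref{Du} is unavailable. Your parity lemma (if $D\ne D'$ then $\min\{D,D'\}$ is odd) is correct and consistent with observations the paper makes elsewhere (e.g., in the proof of Corollary~\ref{Ek}), so that boundary only occurs when $D=D'=e$ is even, and there you correctly get $\alpha=b_{e-1}>0$ and $\beta=b'_{e-1}>0$, so both equivalences hold trivially. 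In comparing the two: the paper's argument buys brevity and uniformity (one combinatorial identity covering every $i$), while yours buys independence from the local structure of the intersection diagram — it runs purely on the array-level identities \eqref{Dk}, \eqref{Du} and the monotonicity in Lemma~\ref{4w} — at the cost of a parity case split and a separate boundary analysis. A small further remark: your identities $c_i\beta=c'_i\alpha$ and $b_i\beta=b'_i\alpha$ already force $\alpha$ and $\beta$ to have the same sign, so the preliminary reduction of \eqref{Et} to \eqref{Es} via Lemma~\ref{4w} is a convenience rather than a necessity in the two main cases.
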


\begin{proof}
Pick $x\in Y$, $z\in \G_1(x)$ and an integer $i$ $(1\le i\le\min\left\lbrace{D-1, D'-1}\right\rbrace)$. By \eqref{r2} and \eqref{r4} the sets $\G_{i,i+1}(x,z)$ and $\G_{i+1,i}(x,z)$ are nonempty. Using the intersection diagram (of rank 1) from Figure~\ref{09} it follows that every vertex in $\G_{i,i+1}(x,z)$ has exactly $c_{i+1}'-c_i$ neighbours in $\G_{i+1,i}(x,z)$; and similarly, every vertex in $\G_{i+1,i}(x,z)$ is adjacent to exactly $c_{i+1}-c'_i$ neighbours in $\G_{i, i+1}(x,z)$ (see also \cite[Lemma~4.3, Lemma~4.4, Lemma~4.5]{FM} for further explanation). Thus, $(c_{i+1}'-c_i)|\G_{i, i+1}(x,z)|=|\G_{i+1,i}(x,z)|(c_{i+1}-c'_i)$ and the claim follows.
\end{proof}

\begin{corollary}
\label{Fb}
With reference to Notation~\ref{GN}, let $\G$ denote $(Y,Y')$-distance-biregular graph with $D\ge 3$. If there exists an integer $j$ $(2\leq j \leq \min\left\lbrace{D-1,D'-1}\right\rbrace)$ such that  $c_i=1$ for all $1\leq i\leq j$ then $c'_i=1$ for all $1\leq i\leq j$.
\end{corollary}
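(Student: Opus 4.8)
The plan is to prove the statement by induction on $i$, showing that $c'_i=1$ for every $i$ with $1\le i\le j$. The principal tool will be the equivalence \eqref{Es} of Proposition~\ref{Eo}, namely $c'_{i+1}=c_i \Lra c_{i+1}=c'_i$, which lets one transfer an equality among the $c$'s and the $c'$'s at one level to the next. The base case requires nothing to prove: $c'_1=1$ holds by definition for any distance-biregular graph, since for $u\in Y'$ and $v\in\G_1(u)$ the only vertex of $\G_0(u)$ adjacent to $v$ is $u$ itself, so this is already built into the intersection-array notation $(k',b'_1,\ldots;1,c'_2,\ldots)$ of Notation~\ref{GN}.

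For the inductive step I would fix $i$ with $1\le i\le j-1$ and assume $c'_i=1$. First I check that $i$ lies in the range required by Proposition~\ref{Eo}: since $j\le\min\{D-1,D'-1\}$ we have $i\le j-1<\min\{D-1,D'-1\}$, so \eqref{Es} is available at this index. Because $i+1\le j$, the hypothesis of the corollary gives $c_{i+1}=1$, and by the inductive hypothesis $c'_i=1$; hence the right-hand side $c_{i+1}=c'_i$ of \eqref{Es} holds. The equivalence then forces the left-hand side $c'_{i+1}=c_i$, and since $c_i=1$ (again because $i\le j$) I conclude $c'_{i+1}=1$. Iterating from $c'_1=1$ up to $i+1=j$ yields $c'_i=1$ for all $1\le i\le j$, as required.

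I do not expect a genuine obstacle here, but the one point worth flagging — and the reason the corollary is not entirely immediate from the earlier results — is the behaviour at the top index $i=j$. A direct appeal to the monotonicity $c'_i\le c_{i+1}$ of Lemma~\ref{4w}(i) already forces $c'_i=1$ whenever $c_{i+1}=1$, hence for all $1\le i\le j-1$, but it cannot reach $i=j$ since $c_{j+1}$ is not controlled by the hypothesis. The substance of the statement is exactly this boundary case, and it is precisely where the sharper equivalence \eqref{Es} (rather than the inequality of Lemma~\ref{4w}) is needed: with $c_j=1=c'_{j-1}$ in hand, \eqref{Es} applied at index $j-1$ upgrades the inequality to the equality $c'_j=c_{j-1}=1$. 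The only bookkeeping to watch is that $j\ge 2$ guarantees $j-1\ge 1$, so both the base of the induction and the index range of Proposition~\ref{Eo} are met.
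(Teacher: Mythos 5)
Your proof is correct and takes essentially the same route as the paper, whose entire proof is ``Using mathematical induction on $i$, the result follows immediately from \eqref{Es}'' --- your write-up simply fills in the base case $c'_1=1$, the index bookkeeping, and the application of the equivalence \eqref{Es} at each step. Your closing observation that the inequality $c'_i\le c_{i+1}$ of Lemma~\ref{4w}(i) already settles all indices $i\le j-1$ and that the equivalence of Proposition~\ref{Eo} is genuinely needed only at the boundary $i=j$ is a nice clarification, but it does not change the argument.
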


\begin{proof}
Using mathematical induction on $i$, the result follows immediately from \eqref{Es}.
\end{proof}

\medskip
For more relations between intersection numbers of the color partitions, see \cite{DC, FM}.



\subsection{A $\boldsymbol{(\kappa,g)}$-cage graph}
\label{kH}

{\small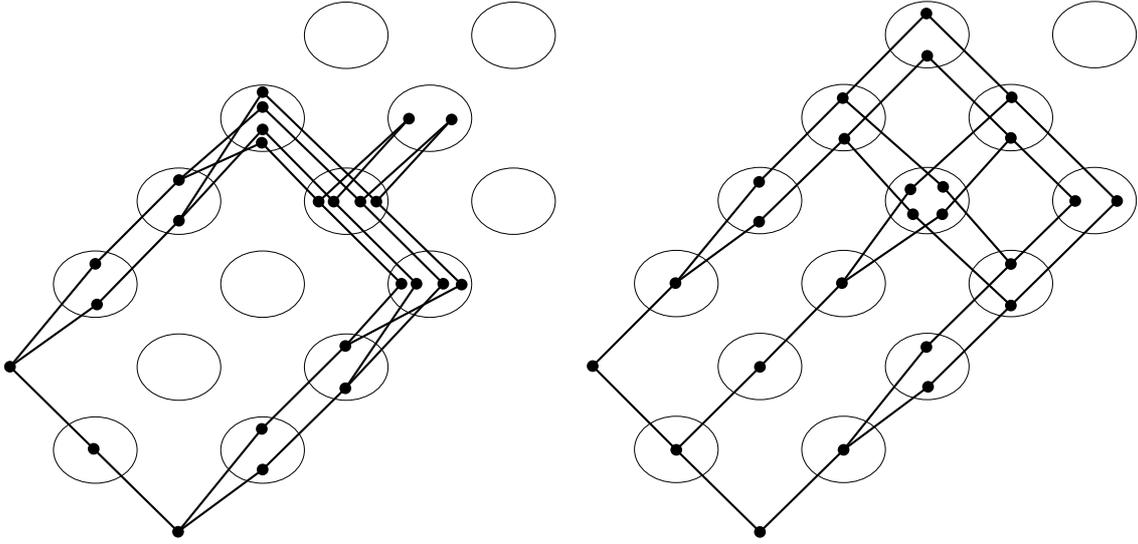
\begin{figure}[t]{\rm
\begin{center}
\begin{tikzpicture}[scale=0.55]
\draw [rotate around={0:(12,2))},line width=.2pt,fill=white,draw=black] (4,8) ellipse (1cm and .8cm);
\draw [rotate around={0:(12,2))},line width=.2pt,fill=white,draw=black] (-4,4) ellipse (1cm and .8cm);
\draw [rotate around={0:(12,2))},line width=.2pt,fill=white,draw=black] (0,0) ellipse (1cm and .8cm);
\draw [rotate around={0:(12,2))},line width=.2pt,fill=white,draw=black] (0,4) ellipse (1cm and .8cm);
\draw [rotate around={0:(12,2))},line width=.2pt,fill=white,draw=black] (-2,6) ellipse (1cm and .8cm);
\draw [rotate around={0:(12,2))},line width=.2pt,fill=white,draw=black] (2,6) ellipse (1cm and .8cm);
\draw [rotate around={0:(12,2))},line width=.2pt,fill=white,draw=black] (0,8) ellipse (1cm and .8cm);
\draw [rotate around={0:(12,2))},line width=.2pt,fill=white,draw=black] (4,4) ellipse (1cm and .8cm);
\draw [rotate around={0:(12,2))},line width=.2pt,fill=white,draw=black] (2,2) ellipse (1cm and .8cm);
\draw [rotate around={0:(12,2))},line width=.2pt,fill=white,draw=black] (-4,0) ellipse (1cm and .8cm);
\draw [rotate around={0:(12,2))},line width=.2pt,fill=white,draw=black] (-6,2) ellipse (1cm and .8cm);
\draw [rotate around={0:(12,2))},line width=.2pt,fill=white,draw=black] (-6,-2) ellipse (1cm and .8cm);
\draw [rotate around={0:(12,2))},line width=.2pt,fill=white,draw=black] (-2,-2) ellipse (1cm and .8cm);
\draw [rotate around={0:(12,2))},line width=.2pt,fill=white,draw=black] (-2,2) ellipse (1cm and .8cm);
\draw [line width=.8pt] (-6,2.49)-- (-8.04,0.01);
\draw [line width=.8pt] (-8.04,0.01)-- (-6.04,-1.97);
\draw [line width=.8pt] (-4,4.51)-- (-6,2.49);
\draw [line width=.8pt] (-5.96,1.51)-- (-8.04,0.01);
\draw [line width=.8pt] (-4.02,-3.97)-- (-6.04,-1.97);
\draw [line width=.8pt] (-2.02,5.41)-- (-4,4.51);
\draw [line width=.8pt] (-4,4.51)-- (-2,6.27);
\draw [line width=.8pt] (-4,3.53)-- (-5.96,1.51);
\draw [line width=.8pt] (-2,-2.47)-- (-4.02,-3.97);
\draw [line width=.8pt] (-4.02,-3.97)-- (-2.02,-1.49);
\draw [line width=.8pt] (-2,5.73)-- (-4,3.53);
\draw [line width=.8pt] (-2,6.63)-- (-4,3.53);
\draw [line width=.8pt] (-2,5.73)-- (-0.3,3.99);
\draw [line width=.8pt] (-2,6.63)-- (0.72,3.99);
\draw [line width=.8pt] (0.72,3.99)-- (2.76,1.99);
\draw [line width=.8pt] (2.52,5.97)-- (0.72,3.99);
\draw [line width=.8pt] (1.68,2.01)-- (-0.3,3.99);
\draw [line width=.8pt] (-0.3,3.99)-- (1.5,5.99);
\draw [line width=.8pt] (1.5,5.99)-- (-0.66,3.99);
\draw [line width=.8pt] (-0.66,3.99)-- (-2.02,5.41);
\draw [line width=.8pt] (-2,6.27)-- (0.34,3.99);
\draw [line width=.8pt] (0.34,3.99)-- (2.52,5.97);
\draw [line width=.8pt] (1.68,2.01)-- (-0.02,-0.51);
\draw [line width=.8pt] (-0.02,-0.51)-- (-2,-2.47);
\draw [line width=.8pt] (-2.02,-1.49)-- (-0.02,0.51);
\draw [line width=.8pt] (-0.02,0.51)-- (2.76,1.99);
\draw [line width=.8pt] (-0.02,-0.51)-- (2.32,2.01);
\draw [line width=.8pt] (2.32,2.01)-- (0.34,3.99);
\draw [line width=.8pt] (-0.02,0.51)-- (1.32,2.01);
\draw [line width=.8pt] (1.32,2.01)-- (-0.66,3.99);
\fill (-6,2.49) circle (4pt);
\fill (-8.04,0.01) circle (4pt);
\fill (-6.04,-1.97) circle (4pt);
\fill (-4,4.51) circle (4pt);
\fill (-5.96,1.51) circle (4pt);
\fill (-4.02,-3.97) circle (4pt);
\fill (-2.02,5.41) circle (4pt);
\fill (-2,6.27) circle (4pt);
\fill (-4,3.53) circle (4pt);
\fill (-2,-2.47) circle (4pt);
\fill (-2.02,-1.49) circle (4pt);
\fill (-0.66,3.99) circle (4pt);
\fill (0.34,3.99) circle (4pt);
\fill (-2,5.73) circle (4pt);
\fill (-2,6.63) circle (4pt);
\fill (1.32,2.01) circle (4pt);
\fill (2.32,2.01) circle (4pt);
\fill (-0.02,-0.51) circle (4pt);
\fill (-0.02,0.51) circle (4pt);
\fill (1.5,5.99) circle (4pt);
\fill (2.52,5.97) circle (4pt);
\fill (-0.3,3.99) circle (4pt);
\fill (0.72,3.99) circle (4pt);
\fill (1.68,2.01) circle (4pt);
\fill (2.76,1.99) circle (4pt);
\end{tikzpicture}~~
\begin{tikzpicture}[scale=0.55]
\draw [rotate around={0:(12,2))},line width=.2pt,fill=white,draw=black] (4,8) ellipse (1cm and .8cm);
\draw [rotate around={0:(12,2))},line width=.2pt,fill=white,draw=black] (-4,4) ellipse (1cm and .8cm);
\draw [rotate around={0:(12,2))},line width=.2pt,fill=white,draw=black] (0,0) ellipse (1cm and .8cm);
\draw [rotate around={0:(12,2))},line width=.2pt,fill=white,draw=black] (0,4) ellipse (1cm and .8cm);
\draw [rotate around={0:(12,2))},line width=.2pt,fill=white,draw=black] (-2,6) ellipse (1cm and .8cm);
\draw [rotate around={0:(12,2))},line width=.2pt,fill=white,draw=black] (2,6) ellipse (1cm and .8cm);
\draw [rotate around={0:(12,2))},line width=.2pt,fill=white,draw=black] (0,8) ellipse (1cm and .8cm);
\draw [rotate around={0:(12,2))},line width=.2pt,fill=white,draw=black] (4,4) ellipse (1cm and .8cm);
\draw [rotate around={0:(12,2))},line width=.2pt,fill=white,draw=black] (2,2) ellipse (1cm and .8cm);
\draw [rotate around={0:(12,2))},line width=.2pt,fill=white,draw=black] (-4,0) ellipse (1cm and .8cm);
\draw [rotate around={0:(12,2))},line width=.2pt,fill=white,draw=black] (-6,2) ellipse (1cm and .8cm);
\draw [rotate around={0:(12,2))},line width=.2pt,fill=white,draw=black] (-6,-2) ellipse (1cm and .8cm);
\draw [rotate around={0:(12,2))},line width=.2pt,fill=white,draw=black] (-2,-2) ellipse (1cm and .8cm);
\draw [rotate around={0:(12,2))},line width=.2pt,fill=white,draw=black] (-2,2) ellipse (1cm and .8cm);
\draw [line width=.8pt] (-8,0.01)-- (-6,-2.01);
\draw [line width=.8pt] (-6,-2.01)-- (-4,-3.99);
\draw [line width=.8pt] (-6.02,2.01)-- (-8,0.01);
\draw [line width=.8pt] (-4,-0.01)-- (-6,-2.01);
\draw [line width=.8pt] (-2,-2.01)-- (-4,-3.99);
\draw [line width=.8pt] (-4.02,4.45)-- (-6.02,2.01);
\draw [line width=.8pt] (-6.02,2.01)-- (-4.02,3.49);
\draw [line width=.8pt] (-2.04,2.01)-- (-4,-0.01);
\draw [line width=.8pt] (0.02,-0.49)-- (-2,-2.01);
\draw [line width=.8pt] (-2,-2.01)-- (-0.02,0.47);
\draw [line width=.8pt] (-0.4,4.27)-- (-2.04,2.01);
\draw [line width=.8pt] (0.36,3.67)-- (-2.04,2.01);
\draw [line width=.8pt] (-0.4,4.27)-- (2.02,6.49);
\draw [line width=.8pt] (0.36,3.67)-- (2,5.51);
\draw [line width=.8pt] (2,5.51)-- (3.54,3.99);
\draw [line width=.8pt] (0,7.49)-- (2,5.51);
\draw [line width=.8pt] (4.54,3.99)-- (2.02,6.49);
\draw [line width=.8pt] (2.02,6.49)-- (-0.02,8.51);
\draw [line width=.8pt] (-0.02,8.51)-- (-2.02,6.47);
\draw [line width=.8pt] (-2.02,6.47)-- (-4.02,4.45);
\draw [line width=.8pt] (-4.02,3.49)-- (-1.98,5.49);
\draw [line width=.8pt] (-1.98,5.49)-- (0,7.49);
\draw [line width=.8pt] (4.54,3.99)-- (2,1.47);
\draw [line width=.8pt] (2,1.47)-- (0.02,-0.49);
\draw [line width=.8pt] (-0.02,0.47)-- (2,2.47);
\draw [line width=.8pt] (2,2.47)-- (3.54,3.99);
\draw [line width=.8pt] (2,1.47)-- (-0.34,3.67);
\draw [line width=.8pt] (-0.34,3.67)-- (-1.98,5.49);
\draw [line width=.8pt] (2,2.47)-- (0.38,4.33);
\draw [line width=.8pt] (0.38,4.33)-- (-2.02,6.47);
\fill (-8,0.01) circle (4pt);
\fill (-6,-2.01) circle (4pt);
\fill (-4,-3.99) circle (4pt);
\fill (-6.02,2.01) circle (4pt);
\fill (-4,-0.01) circle (4pt);
\fill (-2,-2.01) circle (4pt);
\fill (-4.02,4.45) circle (4pt);
\fill (-4.02,3.49) circle (4pt);
\fill (-2.04,2.01) circle (4pt);
\fill (0.02,-0.49) circle (4pt);
\fill (-0.02,0.47) circle (4pt);
\fill (-2.02,6.47) circle (4pt);
\fill (-1.98,5.49) circle (4pt);
\fill (-0.4,4.27) circle (4pt);
\fill (0.36,3.67) circle (4pt);
\fill (0.38,4.33) circle (4pt);
\fill (-0.34,3.67) circle (4pt);
\fill (2,1.47) circle (4pt);
\fill (2,2.47) circle (4pt);
\fill (-0.02,8.51) circle (4pt);
\fill (0,7.49) circle (4pt);
\fill (2.02,6.49) circle (4pt);
\fill (2,5.51) circle (4pt);
\fill (4.54,3.99) circle (4pt);
\fill (3.54,3.99) circle (4pt);
\end{tikzpicture}
\caption{\rm 
Distance-biregular graph with intersection array $(3,1,2,1,2;\,1,1,1,1,2)$ for one color class, and $(2,2,1,2,1,1;\,1,1,1,1,2,2)$ for the other color class.
}
\label{03}
\end{center}
}\end{figure}}


If a regular graph of valency $\kappa$ and diameter $d$ has $v$ vertices then $v\le 1+\kappa(\kappa-1)+\cdots+\kappa(\kappa-1)^{d-1}$. Graphs with $v=1+\kappa(\kappa-1)+\cdots+\kappa(\kappa-1)^{d-1}$ are called Moore graphs. {\sc Damerell} \cite{DRM} proved that a Moore graph of valency $\kappa\ge 3$ has diameter $2$. A $(\kappa,g)$-cage graph is a $\kappa$-regular graph of girth $g$ and minimum number of  vertices $v = (\kappa,g)$ (see \cite{NBs,MST}) (constructions of $(\kappa,g)$-cage graph have been studied by many authors, and for more information about it we refer to \cite{EJ,EJS,HK,KZ,NLV,sachs,SHg}). The subdivision graph of a graph $\G$ is the graph obtained from $\G$ by replacing each of its edges by a path of length $2$ (in Figure~\ref{03} the subdivision graph of the Petersen graph is given). The following complete list of $(\kappa,g)$-cage graphs is known (cf. \cite{NBs, MST}): (ci) cycles of length $g$ $(g\ge 3)$; (cii) complete graphs $K_{\kappa+1}$ $(\kappa\ge 3)$; (ciii) complete bipartite graphs $K_{\kappa,\kappa}$ $(\kappa\ge 3)$; (civ) the Petersen graph; (cv) the Hoffman-Singleton graph; (cvi) $(57,5)$-cage graph (existence is disproved recently by {\sc Makhnev} \cite{AAM}); (cvii) a $(\kappa,g)$-cage graph exists with $g\in\{6,8,12\}$, for some values of $\kappa\ge 3$ (not yet completely classified). Note that the subdivision of complete graph $K_{\kappa+1}$ $(\kappa\ge 3)$ has diameter $3$ and the subdivision of complete bipartite graphs $K_{\kappa,\kappa}$ $(\kappa\ge 3)$ has diameter $4$.

In Theorem~\ref{Gp} and Lemma~\ref{ko}, we recall some well-known results of one particular subfamily of distance-biregular graphs which we use later.

\begin{theorem}[{{\rm\cite[Lemma~3.3, Theorem 3.4]{MST}}}]
\label{Gp}
Let $\G =(X ,\R )$ denote a $(\kappa,g)$-cage graph of diameter $d$. The subdivision graph $S(\G )$ of $\G $ is $(X ,\R )$-distance-biregular with the following intersection arrays.
\begin{enumerate}[label=(\alph*), font=\rm]
\item Assume that $g$ is odd. The eccentricity of $x\in X $ in $S(\G )$ is $2d+1$, and the intersection array for $x\in X $ in $S(\G )$ is
$$
(\underbrace{\kappa}_{b_0},\underbrace{1}_{b_1},\kappa-1,1,\kappa-1,\ldots,\underbrace{1}_{b_{2d-1}},\underbrace{\kappa-1}_{b_{2d}};1,1,1,1,\ldots,1,1,\underbrace{2}_{c_{2d+1}}).
$$
The eccentricity of $e\in\R$ in $S(\G)$ is $2d+2$, and the intersection array for $e\in\R $ in $S(\G )$ is
$$
(\underbrace{2}_{b_0'},\underbrace{\kappa-1}_{b_1'},1,\kappa-1,1,\ldots,\kappa-1,\underbrace{1}_{b_{2d}'},\underbrace{\kappa-2}_{b_{2d+1}'}; 1,1,1,1,\ldots,1,1,2,\underbrace{2}_{c'_{2d+2}}).
$$
\item Assume that $g$ is even. The eccentricity of $x\in X $ in $S(\G )$ is $2d$, and the intersection array for $x\in X $ in $S(\G )$ is
$$
(\underbrace{\kappa}_{b_0},\underbrace{1}_{b_1},\kappa-1,1,\kappa-1,\ldots,\underbrace{\kappa-1}_{b_{2d-2}},\underbrace{1}_{b_{2d-1}};
1,1,1,1,\ldots,1,\underbrace{\kappa}_{c_{2d}}).
$$
The eccentricity of $e\in\R $ in $S(\G )$ is $2d$, and the intersection array for $e\in\R $ in $S(\G )$ is
$$
(\underbrace{2}_{b_0'},\underbrace{\kappa-1}_{b_1'},1,\kappa-1,1,\ldots,
\underbrace{1}_{b'_{2d-2}},
\underbrace{\kappa-1}_{b'_{2d-1}}; 1,1,1,1,\ldots,1,1,\underbrace{2}_{c'_{2d}}).
$$
\end{enumerate}
\end{theorem}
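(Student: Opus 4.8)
The plan is to analyse the subdivision graph $S(\G)$ directly. Its vertex set is $X\cup\R$: every original vertex $x\in X$ retains valency $\kappa$, and every edge $e=\{p,q\}\in\R$ becomes a new vertex $m_e$ of valency $2$ adjacent to $p$ and $q$. Hence $S(\G)$ is bipartite with color classes $X$ and $\R$, and it is connected since $\G$ is. I would first record how distances in $S(\G)$ read off from those in $\G$: for $x,y\in X$ one has $\partial_{S(\G)}(x,y)=2\,\partial_{\G}(x,y)$; for $x\in X$ and $e=\{p,q\}$ one has $\partial_{S(\G)}(x,m_e)=1+2\min\{\partial_{\G}(x,p),\partial_{\G}(x,q)\}$; and the analogous formula holds between two edge-vertices. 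The key input is that the $(\kappa,g)$-cages under consideration (those listed in Subsection~\ref{kH}) attain the Moore bound and are therefore distance-regular: for odd $g$ they are Moore graphs, with array $(\kappa,\kappa-1,\dots,\kappa-1;1,\dots,1)$, so $c_i(\G)=1$ for all $i$, $a_i(\G)=0$ for $i<d$ and $a_d(\G)=\kappa-1$; for even $g$ they are bipartite, with array $(\kappa,\kappa-1,\dots,\kappa-1;1,\dots,1,\kappa)$, so $a_i(\G)=0$, $c_i(\G)=1$ for $i<d$ and $c_d(\G)=\kappa$. Moreover the girth and diameter are related by $g=2d+1$ when $g$ is odd and $g=2d$ when $g$ is even.

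For a vertex $x\in X$ I would sort $S(\G)$ into layers by distance from $x$: the layer at even distance $2i$ is the set of original vertices at $\G$-distance $i$, and the layer at odd distance $2i+1$ is the set of subdivision vertices $m_f$ whose nearer endpoint lies at $\G$-distance $i$. Counting the neighbours of an original vertex $w$ at distance $2i$ through its incident edges gives $c_{2i}(S(\G))=c_i(\G)$ and $b_{2i}(S(\G))=a_i(\G)+b_i(\G)=\kappa-c_i(\G)$; a subdivision vertex at odd distance has its two neighbours split as $c=1,\,b=1$ when its edge joins two consecutive $\G$-layers and as $c=2,\,b=0$ when its edge joins two vertices of the same (necessarily last) $\G$-layer. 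Substituting the Moore array (odd $g$) makes every $c_j(S(\G))$ equal to $1$ except a final $c_{2d+1}=2$ coming from the within-layer edges counted by $a_d(\G)$, which gives eccentricity $2d+1$ and precisely the stated array; substituting the bipartite-cage array (even $g$) gives eccentricity $2d$ and a final $c_{2d}=\kappa$. This yields the two intersection arrays attached to $x\in X$.

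The array around an edge-vertex $m_e$, with $e=\{p,q\}$, is the delicate part, and I expect it to be the \emph{main obstacle}. Here the layers are governed by the pair of distances $(\partial_{\G}(p,w),\partial_{\G}(q,w))$ of an original vertex $w$ to the two endpoints of $e$. The decisive fact is that the number of vertices $w$ realising a prescribed pair of distances depends only on that pair and not on the edge $e$; for the inner, tree-like layers this is forced by the girth, and in general it is exactly the homogeneity supplied by distance-regularity of $\G$. Using it I would check that the numbers $c'_j,b'_j$ around $m_e$ are independent of all choices, and then evaluate them layer by layer, alternating between valency-$\kappa$ original vertices (odd layers) and valency-$2$ subdivision vertices (even layers). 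The careful bookkeeping is concentrated in the last one or two layers, where girth-$g$ cycles close up: for odd $g$ this produces the trailing values $c'_{2d+1}=c'_{2d+2}=2$, $b'_{2d+1}=\kappa-2$ and eccentricity $2d+2$, and for even $g$ it produces the final $c'_{2d}=2$ and eccentricity $2d$, matching the stated arrays.

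Finally I would conclude. The graph $S(\G)$ is bipartite and, by the two preceding steps, distance-regularized at every vertex; hence by the Godsil--Shawe-Taylor dichotomy \cite{GST} it is distance-regular or distance-biregular, and since its two color classes have different valencies ($\kappa\neq 2$) it is distance-biregular with exactly the computed arrays. Throughout, I would use the smallest instances as sanity checks --- the subdivision of $K_{\kappa+1}$ (odd $g$, $d=1$) and of $K_{\kappa,\kappa}$ (even $g$, $d=2$), together with the subdivision of the Petersen graph of Figure~\ref{03} --- since there every layer can be listed by hand and compared with the claimed intersection arrays.
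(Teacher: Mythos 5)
The paper contains no proof of Theorem~\ref{Gp}: it is imported verbatim from \cite[Lemma~3.3, Theorem~3.4]{MST}. So there is no internal argument to compare with, and your outline has to be judged against what a complete proof requires. Your analysis around a vertex $x\in X$ is correct and essentially complete: the distance formulas, the identities $c_{2i}(S(\G))=c_i(\G)$ and $b_{2i}(S(\G))=\kappa-c_i(\G)$, the $(c,b)=(1,1)$ versus $(2,0)$ dichotomy for subdivision vertices, and the substitution of the Moore (odd $g$), respectively bipartite Moore (even $g$), array of $\G$. One scope caveat: with this paper's literal definition of a cage (minimum number of vertices) the statement is in fact false --- the McGee $(3,7)$-cage has girth $7$ but diameter $4$, is not distance-regular, and its subdivision is not distance-biregular --- so what you are really proving (and what \cite{MST} means) concerns graphs attaining the Moore bound, equivalently $g=2d+1$ or $g=2d$. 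Your restriction to ``those listed in Subsection~\ref{kH}'' silently does this; it would be cleaner, and would avoid relying on the classification, to derive the Moore intersection array of $\G$ directly from the girth--diameter relation.

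The genuine gaps are in the part you defer, the array around an edge-vertex $m_e$, $e=\{p,q\}$, and the ``decisive fact'' you name does not fill them. Knowing that the sizes of the distance-pair classes are independent of $e$ (true: they are the intersection numbers $p^1_{ij}$ of $\G$) is not what being distance-regularized requires; one needs every \emph{individual} vertex of a class to have the same local counts. Two concrete items are missing. First, for an edge-vertex $m_f$, $f=\{u,v\}$, at distance $2j\le 2d$ from $m_e$ with, say, $\partial_{\G}(p,u)=j-1$, you must show that $v$ is not simultaneously at distance $j-1$ from $q$, since otherwise $c'_{2j}=2$ rather than $1$. This needs a closed-walk argument: a $p$--$u$ geodesic, the edge $uv$, a $v$--$q$ geodesic, and the edge $qp$ form a closed walk of length $2j\le 2d<g$ that traverses $uv$ exactly once ($v$ cannot lie on the $p$--$u$ geodesic without lowering the distance from $m_f$ to $m_e$), hence it contains a cycle shorter than the girth, a contradiction. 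Nothing in your outline produces this. Second, the eccentricity claims for $m_e$ need existence statements you never address: for odd $g$ you must exhibit an edge both of whose endpoints are at distance $d$ from both $p$ and $q$; otherwise you only have the upper bound $2d+2$, and the array could terminate at $c'_{2d+1}$. This is a counting argument: the set $C$ of vertices at distance $d$ from both $p$ and $q$ satisfies $|C|=(\kappa-1)^d>0$, and each $w\in C$ has exactly one neighbour at distance $d-1$ from $p$ (necessarily at distance $d$ from $q$) and one at distance $d-1$ from $q$, hence exactly $\kappa-2\ge 1$ neighbours inside $C$ when $\kappa\ge 3$, so $C$ spans an edge; the even-$g$ case is analogous and easier. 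Finally, two minor points: for $\kappa=2$ the subdivision is an even cycle, which is distance-regular rather than distance-biregular in this paper's sense, so that case must be split off; and the closing appeal to Godsil--Shawe-Taylor is superfluous once all intersection numbers have been computed and seen to depend only on the colour class.
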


\begin{lemma}[{{\rm\cite[Corollary 3.5]{MST}}}]
\label{ko}
With reference to Notation~\ref{GN}, a graph $\G$ with vertices of valency $2$ is distance-biregular if and only if $\G$ is either a complete bipartite graph $K_{2,n}$ (for $n\geq 1$) or the subdivision graph of a $(\kappa,g)$-cage graph.
\end{lemma}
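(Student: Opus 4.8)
The statement is an equivalence whose two directions are very asymmetric. For the backward implication there is essentially nothing to do: $K_{2,n}$ is complete bipartite with a $2$-valent color class and is trivially distance-biregular, while for the subdivision graph $S(\G)$ of a $(\kappa,g)$-cage the intersection arrays have already been written down in Theorem~\ref{Gp}, which in particular certifies that $S(\G)$ is distance-biregular and has a $2$-valent color class. So I would spend all the effort on the forward implication.

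For the forward direction, let $\G$ be distance-biregular and let $Y'$ be the color class of $2$-valent vertices, so $|\G_1(u)|=2$ for every $u\in Y'$. The first step is the trivial bound $c_2'=|\G_{1,1}(u,v)|\le|\G_1(u)|=2$ for $v\in\G_2(u)$, which splits the analysis. If $c_2'=2$, then every $v\in\G_2(u)$ has the two neighbours of $u$ as its only neighbours; hence $b_2'=0$, so $D'=2$ and, $\G$ being bipartite with all of $Y'$ adjacent to the same two vertices, one checks $|Y|=2$ and $\G=K_{2,n}$. If $c_2'=1$, then no two vertices of $Y$ have two common neighbours in $Y'$, so sending each $u\in Y'$ to the unordered pair of its neighbours defines a \emph{simple} graph $\G^\ast$ on the vertex set $Y$; since each $x\in Y$ has valency $k$ and its $k$ neighbours in $Y'$ yield $k$ distinct edges of $\G^\ast$, the graph $\G^\ast$ is $k$-regular and $\G\cong S(\G^\ast)$.

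It remains to identify $\G^\ast$ as a cage, and for this I would read the intersection numbers of $S(\G^\ast)$ off the subdivision structure, using that an original vertex at distance $2j$ from $x\in Y$ is a vertex of $\G^\ast$ at $\G^\ast$-distance $j$, while a subdivision vertex at distance $2j+1$ sits on an edge $\{a,b\}$ with $\min(\partial^\ast(x,a),\partial^\ast(x,b))=j$. A short computation gives $c_{2j}(x)=c_j^\ast$, so constancy of the even intersection numbers forces $\G^\ast$ to be distance-regular; and $c_{2j+1}(x)$ equals $2$ or $1$ according to whether the edge $\{a,b\}$ stays inside distance $j$ or passes to distance $j+1$, so constancy of the odd ones forces $a_j^\ast=0$ for every $j$ below the diameter $d^\ast$ of $\G^\ast$. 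The remaining ingredient, $c_j^\ast=1$ for $j<d^\ast$, comes from the distance-regularity of the $2$-valent class: a $4$-cycle of $\G^\ast$ (i.e.\ some $c_j^\ast\ge 2$ with $j<d^\ast$) would, when passed through the subdivision, make an intersection number such as $c_4'$ take two different values at a $2$-valent base vertex. With $a_j^\ast=0$ and $c_j^\ast=1$ for $j<d^\ast$, the graph $\G^\ast$ is locally a tree up to radius $d^\ast-1$, hence attains the Moore bound for its valency and girth and is therefore one of the $(\kappa,g)$-cages listed in Subsection~\ref{kH}.

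I expect the last paragraph to be the crux. The even-distance computation yielding distance-regularity of $\G^\ast$ is routine, but extracting $c_j^\ast=1$ for $j<d^\ast$ from the distance-regularity of the $2$-valent class (the correct generalization of the $c_4'$ obstruction above), and then cleanly invoking the Moore-bound/cage classification while dealing with the degenerate cases $\kappa=2$ (cycles, whose subdivisions are again cycles and only borderline fit the statement) and $d^\ast=1$ (complete graphs), is where the real care lies.
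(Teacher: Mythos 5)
The first thing to say is that the paper contains no proof of this statement: Lemma~\ref{ko} is imported verbatim from Mohar and Shawe-Taylor \cite[Corollary~3.5]{MST}, so there is no in-paper argument to compare yours against; what you have written is in effect a reconstruction of the MST proof. Your skeleton is correct. The dichotomy $c_2'\in\{1,2\}$ (with the degenerate cases $D'\le 1$ giving $K_{2,1}$), the collapse to $K_{2,n}$ when $c_2'=2$, the identification $\G\cong S(\G^\ast)$ with $\G^\ast$ a $k$-regular simple graph when $c_2'=1$, and the translation of distance-regularization at the $Y$-vertices into ``$c_j^\ast$ well defined and $a_j^\ast=0$ below the common eccentricity $d^\ast$'' are all sound (note that the even intersection numbers alone give only constancy of $c_j^\ast$; you need the odd ones as well to split $a_j^\ast$ from $b_j^\ast$, but you do derive both). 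The endgame is also right, with one terminological caveat that is the paper's fault rather than yours: what your argument delivers is girth $\ge 2d^\ast$, i.e.\ girth $\in\{2d^\ast,2d^\ast+1\}$, which is MST's notion of a $(\kappa,g)$-graph (Moore-bound attainment). That property implies minimality of the vertex count, hence ``cage'', and the list in Subsection~\ref{kH} is really the list of such Moore-bound graphs, so your conclusion matches the intended reading. Your flag on the $\kappa=2$ borderline is also apt: under this paper's strict definition (the two classes must have \emph{different} arrays), even cycles and $K_{2,2}$ are distance-regular, not distance-biregular, an inconsistency the quoted statement inherits from MST's looser convention.

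The only genuinely incomplete step is the one you yourself identify as the crux: you assert, but do not prove, that $c_j^\ast\ge 2$ for some $j<d^\ast$ forces an intersection number of the $2$-valent class to be ill defined. Here is how to close it. Take $j$ minimal with $c_j^\ast\ge 2$; then $\G^\ast$ has girth $2j$, and on any girth cycle the cycle distances agree with the graph distances. Fix a geodesic $2j$-cycle $v_0v_1\cdots v_{2j-1}v_0$, let $u$ be the subdivision vertex on $\{v_0,v_1\}$, and set $\rho(v)=\min\{\partial(v_0,v),\partial(v_1,v)\}$ (distances in $\G^\ast$). A subdivision vertex $w$ on an edge $\{a,b\}$ satisfies $\partial_{S}(u,w)=2\min\{\rho(a),\rho(b)\}+2$, and its neighbours at distance one less are exactly the endpoints with $\rho=j-1$; so for $w$ at distance $2j$ from $u$, the relevant intersection number is the number of endpoints of $\{a,b\}$ with $\rho=j-1$, which lies in $\{1,2\}$. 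The antipodal edge $\{v_j,v_{j+1}\}$ has $\rho(v_j)=\rho(v_{j+1})=j-1$, realizing the value $2$. On the other hand, since $j<d^\ast$ there is a vertex at distance $j+1$ from $v_0$, hence a vertex with $\rho\ge j$; walking along a geodesic from $v_0$ to it, the first edge on which $\rho$ reaches $j$ has endpoints with $\rho$-values $j-1$ and $j$, realizing the value $1$. So $c_{2j}'$ takes both values $1$ and $2$, contradicting distance-regularization of the $2$-valent vertices. With this paragraph inserted, your proof is complete.
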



\subsection{The intersection diagram of rank $\boldsymbol{2}$ and the scalars $\boldsymbol{p^i_{2,i}}$ and $\boldsymbol{p^2_{i,i}}$}

For a $(Y,Y')$-distance-biregular graph $\G$ with $D\ge 2$, pick $x\in Y$, $y\in\G_2(x)$ and consider the intersection diagram from Figure~\ref{02}. Using the structure of $\G$ from this intersection diagram, and the fact that $\G$ is distance-biregular, it is not hard to see that for every $i$ $(1\le i\le D)$
\begin{align}
\G_i(x)&=\G_{i,i+2}(x,y) \cup \G_{i,i}(x,y) \cup \G_{i,i-2}(x,y),\label{id0}\\
z\in \G_{i,i-2}(x,y)\cup\G_{i-2,i}(x,y)\qquad
&\Rightarrow\qquad 
\left| \G_1(z) \cap \G_{i-1,i-1}(x,y)\right| = c_i - c_{i-2} = b_{i-2} - b_i, \label{id6}\\
\G_{1,1}(x,y)&\ne\emptyset
\qquad\mbox{and}\qquad
\G_{D-1,D-1}(x,y)\ne\emptyset.\label{Dj}
\end{align}
For example, if $\G_{D-1,D-1}(x,y)=\emptyset$ then vertices in $\G_{D-2,D}(x,y)$  have no neighbours in $\G_{D-1}(x)$, that is $b_{D-2}=0$, a contradiction with the definition of a distance-biregular graph. From the same diagram, it is routine to compute
\begin{align}
|\G_{2,0}(x,y)|=1,\qquad
|\G_{i,i-2}(x,y)|&=\frac{b_2b_3\cdots b_{i-1}}{c_1c_2\cdots c_{i-2}}\ne 0 \qquad(3\le i\le D),\label{am}\\
|\G_{0,2}(x,y)|=1,\qquad
|\G_{i,i+2}(x,y)|&=\frac{b_2b_3\cdots b_{i+1}}{c_1c_2\cdots c_{i}}\ne 0 \qquad(1\le i\le D-2),\label{ap}\\
|\G_{2,2}(x,y)|&=\frac{1}{c_2}(b_0b_1-b_2b_3-c_2)=
\frac{1}{c_2}(c_2(b_1-1)+b_2(c_3-1))\label{Ea}\\
|\G_{i,i}(x,y)|&=|\G_i(x)|-|\G_{i,i-2}(x,y)|-|\G_{i,i+2}(x,y)|\nonumber\\
~&=\frac{b_2b_3\cdots b_{i-1}}{c_1c_2\cdots c_i}(kb_1-b_ib_{i+1}-c_{i-1}c_i)\qquad(3\le i\le D-2),\label{aq}\\
|\G_{D-1,D-1}(x,y)|&=|\G_{D-1}(x)|-|\G_{D-1,D-3}(x,y)|\nonumber\\
~&=\frac{b_2b_3\cdots b_{D-2}}{c_1c_2\cdots c_{D-1}}(kb_1-c_{D-2}c_{D-1}),\label{au}\\
|\G_{D,D}(x,y)|&=|\G_{D}(x)|-|\G_{D,D-2}(x,y)|\nonumber\\
~&=\frac{b_2b_3\cdots b_{D-1}}{c_1c_2\cdots c_{D}}(kb_1-c_{D-1}c_{D}).\label{av}
\end{align}

\begin{figure}[t]
\begin{center}
{\rm{\small
\begin{tikzpicture}[scale=0.55]
\draw [line width=1pt, draw=ForestGreen] (-12,-2)-- (-4,-6);
\draw [line width=1pt, draw=ForestGreen] (-4,-6)-- (16,4);
\draw [line width=1pt, draw=ForestGreen] (16,4)-- (8,8);
\draw [line width=1pt, draw=ForestGreen] (8,8)-- (-12,-2);
\draw [line width=1pt, draw=ForestGreen] (4,6)-- (12,2);
\draw [line width=1pt, draw=ForestGreen] (0,4)-- (8,0);
\draw [line width=1pt, draw=ForestGreen] (-4,2)-- (4,-2);
\draw [line width=1pt, draw=ForestGreen] (-8,0)-- (0,-4);
\draw [line width=1pt, draw=ForestGreen] (-8,-4)-- (16,8);

\fill (-12,-2) circle [radius=0.17];
\node at (-12.5,-2.5) {$x$};
\fill (-4,-6) circle [radius=0.17];
\node at (-4.5,-6.5) {$y$};

\draw [rotate around={0:(-8,-4)},line width=.6pt,fill=white,draw=black] (-8,-4) ellipse (1.7cm and .9cm);
\node at (-8,-4) {$\G_{1,1}$};
\draw [rotate around={0:(-8,0)},line width=.6pt,fill=white,draw=black] (-8,0) ellipse (1.7cm and .9cm);
\node at (-8,0) {$\G_{1,3}$};
\draw [rotate around={0:(-4,-2))},line width=.6pt,fill=white,draw=black] (-4,-2) ellipse (1.7cm and .9cm);
\node at (-4,-2) {$\G_{2,2}$};
\draw [rotate around={0:(-4,2))},line width=.6pt,fill=white,draw=black] (-4,2) ellipse (1.7cm and .9cm);
\node at (-4,2) {$\G_{2,4}$};

\draw [rotate around={0:(0,0))},line width=.6pt,fill=white,draw=white] (0,0) ellipse (1.7cm and .9cm);
\node at (0,0) {$\boldsymbol\ldots$};
\draw [rotate around={0:(0,-4))},line width=.6pt,fill=white,draw=white] (0,-4) ellipse (1.7cm and .9cm);
\node at (0,-4) {$\boldsymbol\ldots$};
\draw [rotate around={0:(0,4))},line width=.6pt,fill=white,draw=white] (0,4) ellipse (1.7cm and .9cm);
\node at (0,4) {$\boldsymbol\ldots$};

\draw [rotate around={0:(4,-2))},line width=.6pt,fill=white,draw=black] (4,-2) ellipse (1.7cm and .9cm);
\node at (4,-2) {$\G_{i,i-2}$};
\draw [rotate around={0:(4,2))},line width=.6pt,fill=white,draw=black] (4,2) ellipse (1.7cm and .9cm);
\node at (4,2) {$\G_{i,i}$};
\draw [rotate around={0:(4,6))},line width=.6pt,fill=white,draw=black] (4,6) ellipse (1.7cm and .9cm);
\node at (4,6) {$\G_{i,i+2}$};

\draw [rotate around={0:(8,0))},line width=.6pt,fill=white,draw=white] (8,0) ellipse (1.7cm and .9cm);
\node at (8,0) {$\boldsymbol\ldots$};
\draw [rotate around={0:(8,4))},line width=.6pt,fill=white,draw=white] (8,4) ellipse (1.7cm and .9cm);
\node at (8,4) {$\boldsymbol\ldots$};
\draw [rotate around={0:(8,8))},line width=.6pt,fill=white,draw=white] (8,8) ellipse (1.7cm and .9cm);
\node at (8,8) {$\boldsymbol\ldots$};

\draw [rotate around={0:(12,2))},line width=.6pt,fill=white,draw=black] (12,2) ellipse (1.7cm and .9cm);
\node at (12,2) {$\G_{D-1,D-3}$};
\draw [rotate around={0:(12,6))},line width=.6pt,fill=white,draw=black] (12,6) ellipse (1.7cm and .9cm);
\node at (12,6) {$\G_{D-1,D-1}$};
\draw [rotate around={0:(16,4))},line width=.6pt,fill=white,draw=black] (16,4) ellipse (1.7cm and .9cm);
\node at (16,4) {$\G_{D,D-2}$};
\draw [rotate around={0:(16,8))},line width=.6pt,fill=white,draw=black] (16,8) ellipse (1.7cm and .9cm);
\node at (16,8) {$\G_{D,D}$};
\end{tikzpicture}
}}
\caption{\rm 
Intersection diagram of rank $2$ of a distance-biregular graph with respect to $x\in X$ and $y\in\G_{2}(x)$, where $\G_{i,j}=\G_{i,j}(x,y)$ $(0\le i,j\le D)$.
}
\label{02}
\end{center}
\end{figure}
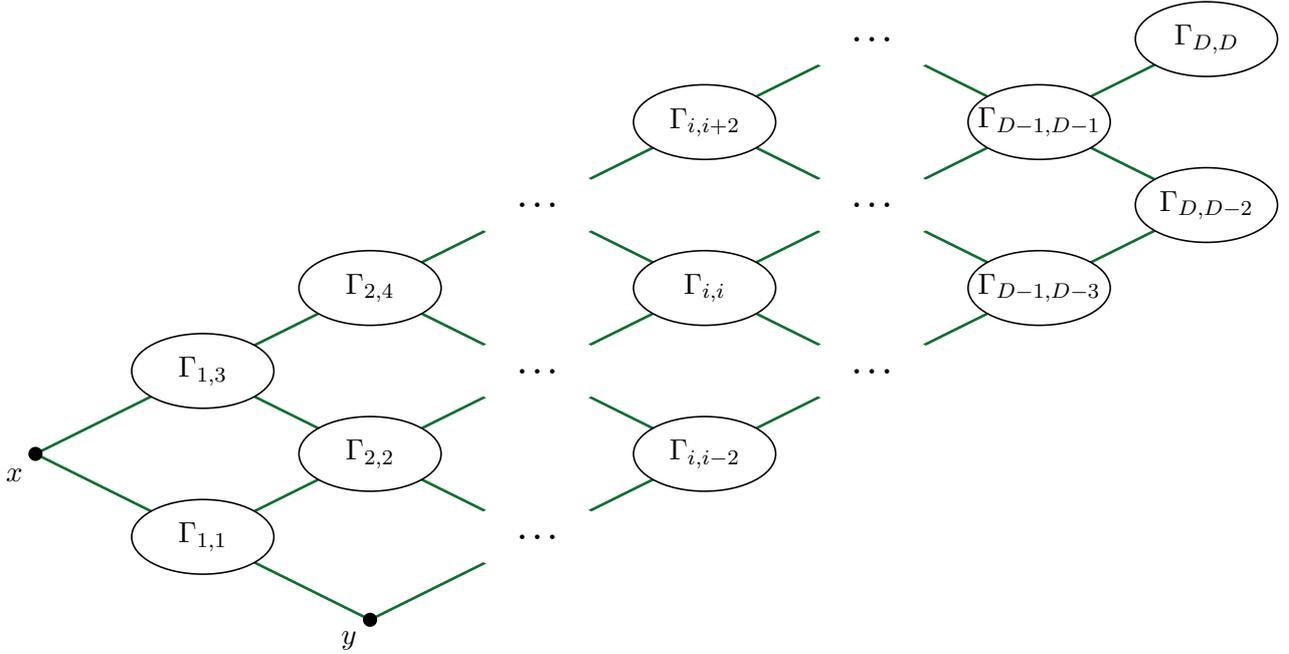

\noindent
Now fix $i$ $(1 \le i \le D)$, and pick $z\in\G_{i}(x)$. We define the numbers $p^i_{2,i}$ and $p^2_{i,i}$ as follows:
$$
p^i_{2,i}=p^i_{2,i}(x,z):=|\G_2(x)\cap\G_{i}(z)|,
\qquad
p^2_{i,i}=p^2_{i,i}(x,y):=|\G_i(x)\cap\G_{i}(y)|.
$$
Fixing $x\in Y$, and counting the number of ordered pairs $(y,z)\in X\times X$, where $y\in\G_{2}(x)$, $z\in\G_{i}(x)$ and $\partial(y,z)=i$, it is routine to show that
\begin{equation}
\label{Di}
k_2p^2_{i,i}=k_ip^i_{2,i}.
\end{equation}
Note that $p^2_{i,i}=0$ if and only if $p^i_{2,i}=0$, $p^1_{2,1}=b_1$ and $p^2_{1,1}=c_2$. From \eqref{Ej}, \eqref{Ea}--\eqref{Di}, we have that the numbers $p^i_{2,i}(x,z)$ and $p^2_{i,i}(x,y)$ do not depend on the choices of $x\in Y$, $y\in\G_2(x)$ and $z\in\G_{i}(x)$, but only on the value of $i$.

\begin{lemma}
\label{Dc}
With reference to Notation~\ref{GN}, pick $i$ $(2\le i\le D-2)$. Then
\begin{align}
p^2_{i,i}=0 
\qquad&
\mbox{if and only if}
\qquad
kb_1-b_ib_{i+1}-c_{i-1}c_i=0.\label{Dl}
\end{align}
Furthermore, $p^2_{D-1,D-1}\ne 0$, and $p^2_{D,D}= 0$ if and only if either $D$ is even and $b_{D-1}=1$, or $D$ is odd and $b'_{D-1}=1$ (in the case when $D$ is odd, we also have $D=D'$).
\end{lemma}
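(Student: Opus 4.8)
The plan is to read each $p^2_{i,i}$ directly off the rank-$2$ diagram in Figure~\ref{02}, using only the cell sizes \eqref{Ea}--\eqref{av}, the parity relations ($c_i+b_i=k$ for even $i$, $c_i+b_i=k'$ for odd $i$), and $b_D=0$. Since $p^2_{i,i}=|\G_{i,i}(x,y)|$ for $x\in Y$, $y\in\G_2(x)$, I would first treat $2\le i\le D-2$: formula \eqref{aq} (with \eqref{Ea} supplying the boundary value $i=2$, where $b_0=k$ and $c_1=1$) gives
$$
p^2_{i,i}=\frac{b_2b_3\cdots b_{i-1}}{c_1c_2\cdots c_i}\bigl(kb_1-b_ib_{i+1}-c_{i-1}c_i\bigr).
$$
Because $i-1\le D-3<D$, each $b_j$ in the numerator is nonzero and each $c_j$ is nonzero, so the leading fraction is strictly positive; hence $p^2_{i,i}=0$ exactly when $kb_1-b_ib_{i+1}-c_{i-1}c_i=0$, which is \eqref{Dl}. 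The claim $p^2_{D-1,D-1}\ne0$ then needs no computation: $p^2_{D-1,D-1}=|\G_{D-1,D-1}(x,y)|$, and this set is nonempty by \eqref{Dj}.

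For the top cell I would use \eqref{av}, whose leading fraction is again strictly positive, so $p^2_{D,D}=0$ if and only if $kb_1=c_{D-1}c_D$. Here $c_1=1$ gives $b_1=k'-1$, and $b_D=0$. If $D$ is even then $c_D+b_D=k$ forces $c_D=k$, while $c_{D-1}=k'-b_{D-1}$ since $D-1$ is odd; substituting, $kb_1=c_{D-1}c_D$ collapses to $k(k'-1)=(k'-b_{D-1})k$, i.e. $b_{D-1}=1$, as required.

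The odd case is the delicate one. With $D$ odd we have $c_D=k'$. I would first pin down $D'$: picking $x\in Y$ and $z\in\G_D(x)$ (so $z\in Y'$ as $D$ is odd), the identity $\partial(x,z)=D$ gives $D\le\varepsilon(z)=D'$, so together with $|D-D'|\le1$ only $D'\in\{D,D+1\}$ survive. Since $D\le D'$, I can apply \eqref{Dk} at the odd indices $D$ and $D-2$ (for $D=3$ using the trivial identity $c_1=c'_1=1$ in place of index $D-2$) and divide the two product relations to obtain $c_{D-1}c_D=c'_{D-1}c'_D$. Writing $c'_{D-1}=k'-b'_{D-1}$ and $c'_D=k-b'_D$, the equation $kb_1=c_{D-1}c_D$ rearranges to
$$
k\bigl(1-b'_{D-1}\bigr)=b'_D\bigl(k'-b'_{D-1}\bigr)=b'_D\,c'_{D-1}.
$$
If $D'=D+1$, then $D-1<D'$ and $D<D'$ give $b'_{D-1}\ge1$, $b'_D\ge1$ and $c'_{D-1}\ge1$, so the left-hand side is $\le0$ while the right-hand side is $\ge1$: impossible. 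Thus $p^2_{D,D}\ne0$ when $D'=D+1$, which forces $D=D'$ as soon as $p^2_{D,D}=0$. When $D=D'$ we have $b'_D=0$, and the displayed identity becomes $k(1-b'_{D-1})=0$, i.e. $b'_{D-1}=1$; reversing the steps gives the converse. Hence, for odd $D$, $p^2_{D,D}=0$ precisely when $D=D'$ and $b'_{D-1}=1$.

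The first two paragraphs are routine, resting only on positivity of the displayed prefactors and on \eqref{Dj}. The one genuine obstacle is the odd case: converting the $Y$-side equality $kb_1=c_{D-1}c_D$ into a statement about $b'_{D-1}$ through the product relation \eqref{Dk}, and in particular the sign argument that rules out $D'=D+1$ and thereby delivers the auxiliary conclusion $D=D'$.
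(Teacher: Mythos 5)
Your proposal is correct and follows essentially the same route as the paper's proof: read $p^2_{i,i}=|\G_{i,i}(x,y)|$ off the rank-$2$ diagram via \eqref{Ea}--\eqref{av} with positive prefactors, get $p^2_{D-1,D-1}\ne 0$ from \eqref{Dj}, and split the $p^2_{D,D}$ claim by parity of $D$, using \eqref{Dk} in the odd case to pass to $c'_{D-1}c'_D$ and a positivity argument to exclude $D'=D+1$. Your rearrangement $k(1-b'_{D-1})=b'_Dc'_{D-1}$ is algebraically the same identity the paper writes as $c'_D(b'_{D-1}-1)+b'_Db_1=0$, and your extra care for $i=2$ (via \eqref{Ea}) and for $D=3$ (using $c_1=c'_1=1$) only makes explicit what the paper leaves implicit.
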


\begin{proof}
Note that $p^2_{i,i}=|\G_{i,i}(x,y)|$ $(2\le i\le D)$. Equation \eqref{Dl} follows immediately from \eqref{aq}. The claim $p^2_{D-1,D-1}\ne 0$ follows from \eqref{Dj}. The proof for the third claim we split in two cases.

{\sc Case 1}. Assume that $D$ is even. By \eqref{av}, $p^2_{D,D}=0$ if and only if $kb_1=c_{D-1}c_D$. Since $c_D=k$ we have $p^2_{D,D}=0$ if and only if $b_1=c_{D-1}$. On the other hand $D-1$ is odd, and we have $c_{D-1}+b_{D-1}=k'=b_1+1=c_{D-1}+1$. The result follows.

{\sc Case 2}. Assume that $D$ is odd. Pick $x\in Y$, and note that $\G_D(x)\subset Y'$. This yields $D'\ge D$. Since $\min\{D,D'\}=D$, by \eqref{Dk} we have $c_{D-1}c_D=c'_{D-1}c'_D$. Now \eqref{av} yields $p^2_{D,D}=0$ if and only if $kb_1=c'_{D-1}c'_D$. Note that $b'_{D-1}+c'_{D-1}=k'=b_1+1$, and with it $b'_{D-1}-1=b_1-c'_{D-1}$. As $c'_D+b'_D=k$ we have $p^2_{D,D}=0$ if and only if $c'_D(b_1-c'_{D-1})+b_D'b_1=0$. 
If $D'=D+1$ then $c'_D(b_1-c'_{D-1})+b_D'b_1= c'_D(b'_{D-1}-1)+b_D'b_1=0$, a contradiction (the numbers $b_1$, $b'_{D-1}$ and $b'_D$ are positive). Thus we have $D'=D$ and $b'_D=0$. With it $p^2_{D,D}=0$ if and only if $b_1=c'_{D-1}$, and the result follows.
\end{proof}


\section{The scalars $\boldsymbol{\gamma_i}$ and the case when $\boldsymbol{c_2'\ge 2}$}
\label{La}

Let $\G$ denote an almost $2$-$Y$-homogeneous $(Y,Y')$-distance-biregular graph with $D\ge 3$. In this section we define scalars $\gamma_i$ in the following way. Pick $x\in Y$, $y\in\G_2(x)$ and $u\in\G_{i,i}(x,y)$. Then 
$$
\gamma_i:=|\G_{i-1}(u)\cap\G_{1,1}(x,y)|\qquad (1\leq i \leq D-2).
$$
In addition, if $\G$ is a $2$-$Y$-homogeneous then we also define $\gamma_{D-1}$ by
$$
\gamma_{D-1}:=|\G_{D-2}(u)\cap\G_{1,1}(x,y)|.
$$
Since we are working with an (almost) $2$-$Y$-homogeneous DBG, we are not interested in defining $\gamma_D$. Clearly we have $\gamma_1=1$. In the case when $D-1=D'$ we have $\gamma_{D-1}=c_2$ (see the proof of Corollary~\ref{Ek}). In Section~\ref{oH} we show that the scalars $\gamma_i$ are nonzero, when $k'\ge 3$.

In this section we compute some equalities for the case $c_2'\ge 2$, and we show that, if $c_2'\ge 3$ then $D\le 5$.

\begin{lemma}
\label{gH}
With reference to Notation~\ref{GN}, let $\G$ denote a $2$-$Y$-homogeneous $(Y,Y')$-distance-biregular graph with $D\ge 3$. Pick $i$ $(1\leq i \leq \min\left\lbrace D-1, D'-1 \right\rbrace)$, and assume that $\gamma_2$ and $\gamma_i$ are nonzero.
\begin{enumerate}[label=(\roman*), font=\rm]
\item If $c_2'\ge 2$ then $(k'-2)(\gamma_2-1)=(c_2-1)(c_2'-2)$.
\item If $i$ is even then $\gamma_i(c_{i+1}-1)=c_i(c_2'-1)$.
\item If $i$ is odd then $\gamma_i(c_{i+1}'-1)=c_i'(c_2'-1)$.
\end{enumerate}
\end{lemma}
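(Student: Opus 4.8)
The plan is to extract each identity from double counting inside the rank-$2$ intersection diagram with respect to $x\in Y$ and $y\in\G_2(x)$. Throughout write $W=\G_{1,1}(x,y)$, so $|W|=c_2$. Any two distinct vertices of $W$ lie in $Y'$ and are at distance $2$ (they share the neighbour $x$), hence have exactly $c_2'$ common neighbours, two of which are $x$ and $y$; and for $u\in\G_{i,i}(x,y)$ every neighbour of a vertex of $W$ that is not $x$ or $y$ stays in the appropriate diagonal cell. These two observations are the whole engine.

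For part (i) I would run two counts. First, count pairs $(w,u)$ with $w\in W$, $u\in\G_{2,2}(x,y)$ and $w\sim u$: fixing $u$ gives $\gamma_2$ choices of $w$ by definition of $\gamma_2$, while fixing $w$ gives $k'-2$ choices of $u$, since every neighbour of $w$ other than $x,y$ lies in $\G_{2,2}(x,y)$; hence
\[
p^2_{2,2}\,\gamma_2=c_2(k'-2).
\]
Second, count ordered triples $(w,w',u)$ with $w,w'\in W$ distinct, $u\in\G_{2,2}(x,y)$, $u\sim w$ and $u\sim w'$: fixing $u$ gives $\gamma_2(\gamma_2-1)$ ordered pairs among its $\gamma_2$ neighbours in $W$, while fixing $(w,w')$ gives $c_2'-2$ choices of $u$, because the vertices of $\G_{2,2}(x,y)$ adjacent to both $w$ and $w'$ are exactly the $c_2'$ common neighbours of $w,w'$ with $x,y$ deleted; hence
\[
p^2_{2,2}\,\gamma_2(\gamma_2-1)=c_2(c_2-1)(c_2'-2).
\]
Substituting the first equality into the second (replacing $p^2_{2,2}\gamma_2$ by $c_2(k'-2)$) gives $c_2(k'-2)(\gamma_2-1)=c_2(c_2-1)(c_2'-2)$, and cancelling $c_2\neq0$ yields $(k'-2)(\gamma_2-1)=(c_2-1)(c_2'-2)$. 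Note this never divides by $p^2_{2,2}$ or $\gamma_2$.

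For parts (ii) and (iii) the same philosophy applies, but organised around a vertex one step farther out, with the parity of $i$ controlling everything: for even $i$ the vertex $u\in\G_{i,i}(x,y)$ lies in $Y$ and only unprimed intersection numbers occur, whereas for odd $i$ it lies in $Y'$ and the primed ones occur, which is the source of the two formulas. Writing $S=\G_{i-1}(u)\cap W$, so $|S|=\gamma_i$, the factor $c_2'-1$ is produced exactly as in part (i) from two distinct common neighbours in $\G_1(x)$ sharing $c_2'$ common neighbours one of which is $x$. The factor $c_{i+1}-1$ (resp. $c_{i+1}'-1$) is produced by passing one level outward: if $u'\sim u$ with $\partial(x,u')=\partial(y,u')=i+1$, a triangle-inequality check shows every $w\in S$ satisfies $\partial(w,u')=i$, and conversely every neighbour of $u'$ lying on a geodesic toward such a $w$ falls back into $\G_{i,i}(x,y)$. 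Counting the resulting edges between $\G_{i,i}(x,y)$ and $\G_{i+1,i+1}(x,y)$ in two ways, with the constants $\gamma_i,\gamma_{i+1}$ as multiplicities, produces the identities after cancelling the nonzero scalars $\gamma_2,\gamma_i$; the base value $\gamma_1=1$ together with part (i) anchors the argument, and one may proceed by induction on $i$ if a recursion is preferred.

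The main obstacle is not the counting scheme but the verification that the two fibre counts are \emph{exactly} $c_{i+1}-1$ (resp. $c_{i+1}'-1$) and $c_2'-1$, independent of all choices; this independence is precisely where $2$-$Y$-homogeneity (the constancy of every $\gamma_j$) and the detailed rank-$1$ and rank-$2$ diagram structure enter. The bookkeeping of which colour class each auxiliary vertex occupies, so that the correct primed versus unprimed intersection numbers appear, is the delicate part, and some extra care is needed at the top index $i=\min\{D-1,D'-1\}$, where the cell one level out may be empty or behave exceptionally.
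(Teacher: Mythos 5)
Your proof of part (i) is correct and complete, and it takes a (mildly) different route from the paper's: the paper fixes $u\in Y$, $v\in\G_1(u)$, $w\in\G_{2,1}(u,v)$ and counts pairs $(x,y)$ with $x\in\G_{1,2,1}(u,v,w)$ and $y\in\G_{2,1,2}(u,v,w)$, so that the rank-$2$ centre at which homogeneity is applied \emph{varies} during the count; you instead stay inside one fixed rank-$2$ diagram and eliminate $p^2_{2,2}$ between two counts (pairs and ordered triples). Your version never divides by $p^2_{2,2}$ or $\gamma_2$, which is a small virtue; the paper's version has the decisive virtue that it generalizes verbatim to parts (ii) and (iii).

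That is exactly where your proposal has a genuine gap. For (ii)/(iii) you propose to count, inside the fixed diagram of $(x,y)$, the edges between $\G_{i,i}(x,y)$ and $\G_{i+1,i+1}(x,y)$ ``with the constants $\gamma_i,\gamma_{i+1}$ as multiplicities.'' This cannot yield the stated identities. First, the targets $\gamma_i(c_{i+1}-1)=c_i(c_2'-1)$ and $\gamma_i(c'_{i+1}-1)=c'_i(c_2'-1)$ contain neither $\gamma_{i+1}$ nor $\gamma_2$, so a relation in which $\gamma_{i+1}$ enters as a multiplicity and $\gamma_2$ must be ``cancelled'' is not the identity you need. Second, if one actually performs your count (even $i$, say), each edge $uu'$ with $u\in\G_{i,i}(x,y)$, $u'\in\G_{i+1,i+1}(x,y)$ carries $\gamma_i$ vertices $w\in\G_{1,1}(x,y)$ with $\partial(w,u)=i-1$, while fixing $w$ and $u'\in\G_i(w)\cap\G_{i+1,i+1}(x,y)$ gives $c'_i$ choices of $u$; the resulting identity is $E\,\gamma_i=c'_i\sum_{w}|\G_i(w)\cap\G_{i+1,i+1}(x,y)|$, where $E$ is the number of such edges. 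Neither $E$ nor the cell sizes $|\G_i(w)\cap\G_{i+1,i+1}(x,y)|$ occurs in (ii); evaluating them is precisely the content of the paper's later Lemma~\ref{Ei} and the $\Delta_i$ machinery of Proposition~\ref{Dm} and Theorem~\ref{Eg}, none of which you may assume here. Third, the trick that saved you in (i) --- a second count over ordered pairs $w\ne w'$ in $\G_{1,1}(x,y)$ so that $p^2_{2,2}$ cancels --- does not generalize: for $i\ge 3$ that second count requires $|\G_{i-1,i-1}(w,w')\cap\G_{i,i}(x,y)|$ for two vertices $w,w'\in Y'$ at distance $2$, and the hypothesis is $2$-$Y$-homogeneity, not $2$-$Y'$-homogeneity, so no constancy is available for diagrams centred at a pair of $Y'$-vertices.

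The missing idea is that the configuration must be pushed one step out \emph{along a geodesic through a pole}, not out to the far diagonal cell, and the second pole must be allowed to move. Concretely (this is the paper's proof): fix $u$ in the appropriate colour class, $v\in\G_i(u)$ and $w\in\G_{i+1,1}(u,v)$, and count adjacent pairs $(x,y)$ with $x\in\G_{i-1,1}(u,v)$ and $y\in\G_{i,2,1}(u,v,w)$. Choosing $x$ first gives $c_i$ (resp.\ $c'_i$) choices, and then $y$ runs over the common neighbours of the two $Y'$-vertices $x$ and $w$ other than $v$, giving $c_2'-1$; choosing $y$ first gives $c_{i+1}-1$ (resp.\ $c'_{i+1}-1$) choices, and then homogeneity is applied to the pair $(v,y)$ with $u\in\G_{i,i}(v,y)$, giving $\gamma_i$ choices of $x$. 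Every application of homogeneity is at a different centre $(v,y)$ as $y$ varies --- exactly the freedom your fixed-diagram plan forbids itself.
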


\begin{proof}
Our proof is along the lines of the proof of \cite[Lemma~5.1]{NKsm}, where the author studied bipartite distance-regular graphs.

(i) Note that, by assumption we have $k'\ge 3$ (since $k'=c_2'+b_2'$ and $D'\ge 3$). Pick $u\in Y$, $v\in\G_1(u)$ and $w\in\G_{2,1}(u,v)$. We count the number $N$ of ordered pairs $(x,y)$ with $x\in\G_{1,2,1}(u,v,w)$ and $y\in\G_{2,1,2}(u,v,w)$ in two different ways. Since $w\in\G_2(u)$, there are precisely $c_2-1$ vertices $x\in\G_{1,1}(u,w)$ with $x\ne v$. Fix such a vertex $x$. Since $x\in\G_2(v)\subseteq Y'$, there are precisely $c_2'-2$ vertices $y\in\G_{1,1}(x,v)$ with $y\ne u$, $y\ne w$. So we have $N=(c_2-1)(c_2'-2)$. On the other hand, there are precisely $k'-2$ vertices $y\in\G_{2,1}(u,v)$ with $y\ne w$. Fix such a vertex $y$ and note that $y\in\G_2(w)$. Since $\partial(y,w)=2$ and $u\in\G_{2,2}(y,w)$, $w$ and $y$ have precisely $\gamma_2-1$ common neighbours $x\in\G_{1}(u)$ with $x\ne v$. So we obtain $N=(k'-2)(\gamma_2-1)$.

(ii) Assume that $i$ is even. Pick $u\in Y$, $v\in\G_i(u)$ and $w\in\G_{i+1,1}(u,v)$. The result follows by counting the number of ordered pairs $(x,y)$ with $x\in\G_{i-1,1}(u,v)$ and $y\in\G_{i,2,1}(u,v,w)$ in two different ways.

(iii) Assume that $i$ is odd. Pick $u\in Y'$, $v\in\G_i(u)$ and $w\in\G_{i+1,1}(u,v)$. The result follows by counting the number of ordered pairs $(x,y)$ with $x\in\G_{i-1,1}(u,v)$ and $y\in\G_{i,2,1}(u,v,w)$ in two different ways.
\end{proof}

\begin{remark}
\label{iH}
{\rm
Note that all three claims of Lemma~\ref{gH} hold also for an almost $2$-$Y$-homogeneous distance-biregular graph, under the assumption that $D\ge 4$ and $1\le i\le D-2$.
}\end{remark}

\begin{theorem}
\label{hH}
With reference to Notation~\ref{GN}, let $\G$ denote a $2$-$Y$-homogeneous $(Y,Y')$-distance-biregular graph with $D\ge 3$. If $c_2'\ge 3$ then $D\le 5$.
\end{theorem}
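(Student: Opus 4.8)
The plan is to argue by contradiction: assume $D\ge 6$ and derive a contradiction from $c_2'\ge 3$. Since $c_2'\ge 3$ we have $k'=c_2'+b_2'\ge 3$, so the first step is to record that the scalars $\gamma_i$ entering Lemma~\ref{gH} are nonzero for $1\le i\le\min\{D-1,D'-1\}$ (this nonvanishing, valid because $k'\ge 3$, is precisely what lets Lemma~\ref{gH} be applied), after which all three identities are available. Rewriting parts (ii) and (iii) as $c_{i+1}=1+c_i(c_2'-1)/\gamma_i$ for even $i$ and $c_{i+1}'=1+c_i'(c_2'-1)/\gamma_i$ for odd $i$ gives, for each admissible $i$, a formula for the next ``diagonal'' intersection number. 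Because $\gamma_i$ counts a subset of $\G_{1,1}(x,y)$ and $|\G_{1,1}(x,y)|=p^2_{1,1}=c_2$, we also have the uniform bound $1\le\gamma_i\le c_2$.

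Next I would turn these into a usable recursion along a single color class. The identities of Lemma~\ref{gH} alternately produce $c_{i+1}$ (for even $i$) and $c_{i+1}'$ (for odd $i$), so to close the loop I would convert between $c_j$ and $c_j'$ at a common index using the relations already established: the path-counting identity \eqref{Dk} (which gives $c_2c_3=c_2'c_3'$ at the first odd index, and its higher analogues), together with the comparisons $c_i'\le c_{i+1}$, $c_i\le c_{i+1}'$ of Lemma~\ref{4w}(i) and the dichotomy of Proposition~\ref{Eo}. Using these, starting from $\gamma_1=1$ and the value of $\gamma_2$ coming from Lemma~\ref{gH}(i), I would compute the first several diagonal numbers $c_3,c_3',c_4,c_4',c_5,\dots$ as explicit expressions in the parameters $c_2$, $c_2'$, $k'$ and the $\gamma_i$.

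The contradiction should then emerge from two opposing facts. On the one hand, $c_2'-1\ge 2$ forces the diagonal numbers to grow; on the other hand, every intersection number is capped by the relevant valency ($c_i\le k$ for even $i$, $c_i\le k'$ for odd $i$, with $b_i\ge 1$ for $i\le D-1$), and Lemma~\ref{jH} supplies the further constraints $c_i\le b_j=(\text{valency})-c_j$ whenever $i+j\le D$ is even. Pushing the recursion to index about $6$ (legitimate once $D\ge 6$, except in the borderline case $D'=D-1$ where $\min\{D-1,D'-1\}$ is one shorter and the gap must be bridged by Lemma~\ref{jH}) should force some $b_i$ to be non-positive or some $c_i$ to exceed its ceiling, which is impossible, giving $D\le 5$. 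I expect the main obstacle to be exactly the control of $\gamma_i$: Lemma~\ref{gH} only sandwiches $\gamma_i$ between $1$ and $c_2$ rather than determining it, so the quantitative growth needed to hit the contradiction precisely at $D=6$ must be squeezed out of the monotonicity of the $c_i$ and the cross-relations between the two classes, with the parity cases ($D'=D$ versus $D'=D\pm1$) handled separately.
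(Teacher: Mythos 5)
Your proposal has the right opening moves (contradiction from $D\ge 6$, the identities of Lemma~\ref{gH}, the caps coming from Lemma~\ref{jH}), but it is not a proof: the decisive quantitative step is exactly the one you defer. You plan to iterate $c_{i+1}=1+c_i(c_2'-1)/\gamma_i$ and $c_{i+1}'=1+c_i'(c_2'-1)/\gamma_i$ out to index about $6$ and hope that some intersection number overshoots its ceiling, while conceding that the $\gamma_i$ are only sandwiched in $[1,c_2]$ and hence the growth cannot be controlled. That concession is the gap: if $\gamma_i$ is close to $c_2$, the recursion need not grow at all, and your auxiliary tools (\eqref{Dk}, Lemma~\ref{4w}(i), Proposition~\ref{Eo}) do not pin the $\gamma_i$ down. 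As written, the argument never reaches a contradiction, and there is no evidence the recursion route can be completed.

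What you are missing is that no recursion is needed: the contradiction already lives at level $i=2$. First, $c_2'\ge 3$ and \eqref{r5} give $c_2\ge 2$, and then Lemma~\ref{gH}(i), $(k'-2)(\gamma_2-1)=(c_2-1)(c_2'-2)$, has a strictly positive right-hand side, so $\gamma_2\ge 2$ --- this is where the hypothesis $c_2'\ge 3$ (as opposed to $c_2'\ge 2$) is consumed, a point your sketch never isolates. Second, the only role of $D\ge 6$ is to license Lemma~\ref{jH} with $i=j=3$: it gives $c_3\le b_3$, i.e.\ $2(c_3-1)\le k'-2$ since $c_3+b_3=k'$. Substituting $c_3-1=c_2(c_2'-1)/\gamma_2$ (Lemma~\ref{gH}(ii) with $i=2$) and $k'-2=(c_2-1)(c_2'-2)/(\gamma_2-1)$ (Lemma~\ref{gH}(i)) yields $2c_2(c_2'-1)(\gamma_2-1)\le (c_2-1)(c_2'-2)\gamma_2 < c_2(c_2'-1)\gamma_2$, hence $2(\gamma_2-1)<\gamma_2$, i.e.\ $\gamma_2<2$, contradicting $\gamma_2\ge 2$. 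This is the paper's proof; your recursion to index $6$ is both unnecessary and, by your own account, not completable with the estimates you have.
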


\begin{proof}
If $c_2'\ge 3$ then \eqref{r5} yields $c_2\ge 2$. Note that $c_2'+b_2'=k'\ge 3$ and by Lemma~\ref{gH}(i), $\gamma_2\ge 2$.

Our proof is by contradiction. Assume that $D\ge 6$. Lemma~\ref{jH} yields $c_3\le b_3$; that is, $2c_3\le k'$, or expressed differently, $2(c_3-1)\le k'-2$. By Lemma~\ref{gH}(i)(ii) this yields
$$
2\cdot \frac{c_2(c_2'-1)}{\gamma_2}\le \frac{(c_2-1)(c_2'-2)}{\gamma_2-1}.
$$
Multiplying both sides by $\gamma_2(\gamma_2-1)$ we get
$$
2c_2(c_2'-1)(\gamma_2-1)\le (c_2-1)(c_2'-2)\gamma_2.
$$
Since $(c_2-1)(c_2'-2)\gamma_2 < c_2(c_2'-1)\gamma_2$ we have $2(\gamma_2-1)<\gamma_2$, that is, $\gamma_2<2$ a contradiction.
\end{proof}

\begin{remark}
\label{lH}
{\rm
Let $\G$ denote a $2$-$Y$-homogeneous $(Y,Y')$-distance-biregular graph with $D\ge 3$. From the proof of Theorem~\ref{hH} we also have that $c_2'\ge 3$ yields $b_3<c_3$ (assumption $c_3\le b_3$ yields $\gamma_2<2$, a contradiction).
}\end{remark}

\begin{lemma}
\label{vH}
With reference to Notation~\ref{GN}, let $\G$ denote a $2$-$Y$-homogeneous $(Y,Y')$-distance-biregular graph with $D\ge 3$ and $k'\ge 2$. If $c_2'\ge 2$ then the following hold.
\begin{enumerate}[label=(\roman*), font=\rm]
\item $\gamma_2\ge 1$.
\item $c_3=\frac{c_2(c_2'-1)}{\gamma_2}+1$.
\item $(k'-2)(c_2-\gamma_2)=(k-c_2)(c_2'-1)$.
\end{enumerate}
\end{lemma}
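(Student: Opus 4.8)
The plan is to fix the valency first, prove (i) by producing a single favourable configuration and invoking $2$-$Y$-homogeneity, and then obtain (ii) and (iii) by feeding $\gamma_2\ge 1$ into Lemma~\ref{gH} together with the basic relations \eqref{r5} and \eqref{Du}.

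First I would record that the hypotheses force $k'\ge 3$: since $D\ge 3$ gives $D'\ge 3$, a vertex of $Y'$ at distance $2$ has a neighbour at distance $3$, so $b_2'\ge 1$, and with $c_2'\ge 2$ we get $k'=c_2'+b_2'\ge 3$. For (i) I would exhibit one instance of the count that is positive and then use homogeneity. Fix $x\in Y$, $y\in\G_2(x)$ and a common neighbour $w\in\G_{1,1}(x,y)$ (nonempty by \eqref{Dj}). As $w\in Y'$ has valency $k'\ge 3$, it has a neighbour $u\notin\{x,y\}$; since $u\sim w$ and $w\sim x$ with $u\ne x$ and $u,x\in Y$, we get $\partial(u,x)=2$, and likewise $\partial(u,y)=2$, so $u\in\G_{2,2}(x,y)$. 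Then $w\in\G_1(u)\cap\G_{1,1}(x,y)$, so for this $u$ the number $|\G_1(u)\cap\G_{1,1}(x,y)|$ is at least $1$; by $2$-$Y$-homogeneity this number equals $\gamma_2$ for every $u\in\G_{2,2}(x,y)$, whence $\gamma_2\ge 1$.

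Claim (ii) is then immediate: apply Lemma~\ref{gH}(ii) with the even index $i=2$, which is legitimate since $2\le\min\{D-1,D'-1\}$ and $\gamma_2\ne 0$. This gives $\gamma_2(c_3-1)=c_2(c_2'-1)$, and dividing by $\gamma_2\ge 1$ yields $c_3=\tfrac{c_2(c_2'-1)}{\gamma_2}+1$. For (iii) I would start from Lemma~\ref{gH}(i), namely $(k'-2)(\gamma_2-1)=(c_2-1)(c_2'-2)$, and rewrite the target left-hand side as $(k'-2)(c_2-\gamma_2)=(k'-2)(c_2-1)-(k'-2)(\gamma_2-1)$. Substituting Lemma~\ref{gH}(i) and factoring collapses this to $(c_2-1)\big((k'-2)-(c_2'-2)\big)=(c_2-1)(k'-c_2')=(c_2-1)b_2'$, using $b_2'=k'-c_2'$. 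It then remains to establish the numerical identity $(c_2-1)b_2'=(c_2'-1)b_2$, since $b_2=k-c_2$ turns the right-hand side into $(k-c_2)(c_2'-1)$. For this I would combine \eqref{r5}, i.e.\ $b_1(c_2-1)=b_1'(c_2'-1)$, with \eqref{Du} for $i=3$, i.e.\ $b_1b_2=b_1'b_2'$ (valid as $3\le\min\{D,D'\}$): solving \eqref{r5} for $c_2-1$ (allowed since $b_1=k'-1>0$) and multiplying by $b_2'$ gives $(c_2-1)b_2'=\tfrac{b_1'b_2'}{b_1}(c_2'-1)=\tfrac{b_1b_2}{b_1}(c_2'-1)=b_2(c_2'-1)$, as required.

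The only genuinely combinatorial step is the existence argument in (i); the rest is bookkeeping. I expect the points needing care to be purely administrative: checking that the instance produced in (i) really lands in the cell $\G_{2,2}(x,y)$ so that homogeneity applies, and making sure each division (by $\gamma_2$ in (ii), by $b_1$ in (iii)) is backed by a positivity already established.
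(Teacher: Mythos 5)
Your proof is correct, and it diverges from the paper's in two of the three parts. For (i), the paper first establishes $c_2\ge 2$ (via Proposition~\ref{Eo}) and $c_3\ge 2$ (via Lemma~\ref{4w}(i)) and then reads $\gamma_2\ge 1$ off the counting identity $c_2(c_2'-1)=\gamma_2(c_3-1)$, whose left-hand side is positive; you instead exhibit a single vertex $u\in\G_{2,2}(x,y)$ through a common neighbour $w$ of $x,y$ (using $k'\ge 3$, which you correctly derive from $c_2'\ge 2$ and $b_2'\ge 1$) and invoke homogeneity to promote one positive instance to $\gamma_2\ge 1$; this is more elementary and needs no auxiliary inequalities. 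Part (ii) coincides with the paper's: both rest on the identity $\gamma_2(c_3-1)=c_2(c_2'-1)$, i.e.\ Lemma~\ref{gH}(ii) with $i=2$. For (iii) the routes genuinely differ: the paper double-counts $|\G_{2,2}(x,y)|$, once via \eqref{Ea} combined with (ii) to get $k'-2+\frac{b_2(c_2'-1)}{\gamma_2}$, and once via the edge count between $\G_{1,1}(x,y)$ and $\G_{2,2}(x,y)$ to get $\frac{c_2(k'-2)}{\gamma_2}$, then equates the two; you instead deduce (iii) purely algebraically from Lemma~\ref{gH}(i) together with \eqref{r5} and \eqref{Du} (for $i=3$, legitimate since $\min\{D,D'\}\ge 3$), which makes clear that (iii) is a formal consequence of identities already recorded in the paper, with no new combinatorial count; the paper's route has the side benefit of producing the explicit value of $p^2_{2,2}$. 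Your bookkeeping is sound throughout: the vertex produced in (i) does land in $\G_{2,2}(x,y)$, the divisions by $\gamma_2$ and by $b_1=k'-1$ are backed by positivity, and the substitutions $b_2'=k'-c_2'$ and $b_2=k-c_2$ are the standard parity relations.
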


\begin{proof}
(i) Since $c_2'\ge 2$, by Proposition~\ref{Eo} we have $c_2\ge 2$. Lemma~\ref{4w}(i) implies $c_3\ge 2$. Pick $u\in Y$, $v\in\G_2(u)$ and $w\in\G_{3,1}(u,v)$. By counting the number $N$ of ordered pairs $(x,y)$ with $x\in\G_{i-1,1}(u,v)$ and $y\in\G_{i,2,1}(u,v,w)$ in two different ways, we get $c_2(c_2'-1)=N=\gamma_2(c_{3}-1)$ (see also the proof of Lemma~\ref{gH}(ii)). This yields $\gamma_2\ge 1$.

(ii) By the comment from (i) we have $c_3-1=\frac{c_2(c_2'-1)}{\gamma_2}$, and the result follows.

(iii) Pick $x\in Y$ and $y\in\G_2(x)$. By \eqref{Ea} and (ii) above,
\begin{equation}
\label{tH}
|\G_{2,2}(x,y)|=p^2_{22}=k'-2+\frac{b_2(c_2'-1)}{\gamma_2}.
\end{equation}
On the other hand, since every vertex from $\G_{2,2}(x,y)$ has exactly $\gamma_2$ neighbours in $\G_{1,1}(x,y)$, we have
\begin{equation}
\label{uH}
|\G_{2,2}(x,y)|=\frac{c_2(k'-2)}{\gamma_2}.
\end{equation}
Using \eqref{tH} and \eqref{uH}, the result follows.
\end{proof}

\section{Distance-biregular graph with $\boldsymbol{k'=2}$}
\label{D2}

By \cite[Corollary~3.5]{MST}, a graph $\G$ with $2$-valent vertices is distance-biregular if and only if either $\G=K_{2,r}$, or $\G$ is the subdivision graph of a $(\kappa,g)$-graph. In this section we show that a $(Y,Y')$-distance-biregular graph with $k'=2$ is $2$-$Y$-homogeneous, and we give combinatorial properties of such graphs. As a corollary, we get that the subdivision graph of a $(\kappa,g)$-cage graph $\G $ (with vertex set $X $) is $2$-$X $-homogeneous. The combinatorial structure of distance-biregular graphs with $k'=2$ plays an important role later in the paper.

\begin{proposition}
\label{km}
With reference to Notation~\ref{GN}, let $\G$ denote a $(Y,Y')$-distance-biregular graph. If $D=3$ then the following are equivalent.
\begin{enumerate}[label=(\roman*), font=\rm]
\item $\G$ is the subdivision graph of a complete graph $K_n$ ($n\ge 3$). 
\item $k'=2$.
\end{enumerate}
Moreover, if {\rm (i), (ii)} hold with $Y$ as the set of vertices of the complete graph $K_n$, then $\G$ is $2$-$Y$-homogeneous, and the intersection array of the color class $Y$ is $(k,1,k-1;1,1,2)$.
\end{proposition}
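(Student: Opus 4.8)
The plan is to prove the equivalence by playing the two structural results of Subsection~\ref{kH} against the hypothesis $D=3$, and then to read off the intersection array and the homogeneity from the resulting explicit description of $\G$.

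The implication {\rm (i)}$\Rightarrow${\rm (ii)} is immediate: in the subdivision graph $S(K_n)$ each subdivision vertex is adjacent to exactly the two endpoints of the edge it replaces, so the color class $Y'$ of subdivision vertices has valency $2$, i.e. $k'=2$. For {\rm (ii)}$\Rightarrow${\rm (i)} I would start from Lemma~\ref{ko}: since $k'=2$, the graph $\G$ is either $K_{2,r}$ or the subdivision graph $S(\G_0)$ of a $(\kappa,g)$-cage graph $\G_0$. The graph $K_{2,r}$ has diameter $2$, so all of its vertices have eccentricity at most $2$, contradicting $D=3$; this rules out the first possibility. Hence $\G=S(\G_0)$. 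By Theorem~\ref{Gp} the eccentricities of the two color classes of $S(\G_0)$ are $2d+1$ (original vertices) and $2d+2$ (subdivision vertices) when $g$ is odd, and $2d$ for both when $g$ is even, where $d=d(\G_0)$. The value $3$ can occur only as $2d+1$ with $d=1$ and $g$ odd; in particular the eccentricity-$3$ class $Y$ must be the class $X$ of original vertices. A cage graph of diameter $1$ is a complete graph, so $\G_0=K_n$ for some $n\ge 3$, which proves {\rm (i)}.

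For the ``moreover'' part I would first pin down the intersection array using Notation~\ref{GN} together with the parity relations $c_i+b_i=k$ (for even $i$) and $c_i+b_i=k'$ (for odd $i$). From $c_1=1$ and $k'=2$ one gets $b_1=k'-1=1$, and from $b_3=0$ one gets $c_3=k'=2$. The only remaining entry is $c_2$: a common neighbour of two vertices $x,z\in Y$ with $\partial(x,z)=2$ is a subdivision vertex of valency $2$ whose two neighbours are exactly the endpoints of the corresponding edge of $K_n$; since $K_n$ is simple this vertex is unique, so $c_2=|\G_{1,1}(x,z)|=1$ and $b_2=k-c_2=k-1$, giving the array $(k,1,k-1;1,1,2)$.

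Finally, to verify that $\G$ is $2$-$Y$-homogeneous I would check the defining condition for $i=1$ and $i=D-1=2$. For $i=1$ the quantity $|\G_{0}(z)\cap\G_{1,1}(x,y)|$ equals $1$ for every $z\in\G_{1,1}(x,y)$, so it is trivially constant. For $i=2$, fix $x\in Y$, $y\in\G_2(x)$ and any $w\in\G_{1,1}(x,y)$; as above $w\in Y'$ has valency $2$ with $\G_1(w)=\{x,y\}$. Any $z\in\G_{2,2}(x,y)$ satisfies $z\ne x,y$, whence $w\notin\G_1(z)$, so $\G_1(z)\cap\G_{1,1}(x,y)=\emptyset$ and $|\G_1(z)\cap\G_{1,1}(x,y)|=0$ independently of $z$. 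Thus $\G$ is $2$-$Y$-homogeneous. The only genuinely delicate point is the color-class bookkeeping in {\rm (ii)}$\Rightarrow${\rm (i)}: one must use $D=3$ both to exclude $K_{2,r}$ and to guarantee that it is the original-vertex class that plays the role of $Y$, so that the eccentricity formulas of Theorem~\ref{Gp} pin down $g$ odd and $d=1$; everything else is routine bookkeeping of intersection numbers.
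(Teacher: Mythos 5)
Your proof is correct and follows essentially the same route as the paper's: Lemma~\ref{ko} plus the eccentricity parities of Theorem~\ref{Gp} force $g$ odd and $d=1$ in the direction (ii)$\Rightarrow$(i), and $2$-$Y$-homogeneity follows because the unique vertex of $\G_{1,1}(x,y)$ has valency $2$ and hence no neighbours in $\G_{2,2}(x,y)$. The only cosmetic differences are that you derive the array $(k,1,k-1;1,1,2)$ from the parity relations $c_i+b_i\in\{k,k'\}$ rather than quoting it from Theorem~\ref{Gp}, and that you make explicit the exclusion of $K_{2,r}$ and the trivial case $i=1$, both of which the paper leaves implicit.
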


\begin{proof}
(i)$\Ra$(ii) Immediate from the definition of subdivision.

\medskip
(ii)$\Ra$(i) Assume that $\G$ is a distance-biregular graph with vertices in $Y'$ of valency $2$. Since $D=3$, by Lemma~\ref{ko}, $\G$ is the subdivision graph of a $(\kappa, g)$-graph $\G^\circ=(X^\circ,\R^\circ)$. By Theorem~\ref{Gp}, the girth $g$ of $\G^\circ$ is odd as otherwise vertices of $\G^\circ$ have even eccentricity. Even more, all vertices of eccentricity $3$ are those which lie in $X^\circ$. This shows that diameter $d$ of $\G^\circ$ is equal to $1$ (as every vertex in $X^\circ$ has eccentricity $2d+1$). Thus, $\G^\circ$ is a complete graph $K_n$ $(n\geq 3)$, and the result follows.

\smallskip
Assume now that {\rm (i), (ii)} hold. By Theorem~\ref{Gp}, vertices in $\G$ of the color partition $Y$ have intersection array $(n-1,1,n-2;1,1,2)$. Pick $x\in Y$, $y\in\G_2(x)$ and consider the combinatorial structure of $\G$ from Figure~\ref{02}. Since $c_2=1$ and $k'=2$, the unique vertex of $\G_{1,1}(x,y)$ does not have neighbours in $\G_{2,2}(x,y)$. Therefore, for every $x\in Y$, $y\in\G_2(x)$ and $z\in \G_{2,2}(x,y)$, $\left|\G_1(z)\cap \G_{1,1}(x,y)\right|=0$. The result follows.
\end{proof}

\begin{theorem}
\label{Db}
With reference to Notation~\ref{GN}, let $\G$ denote a $(Y,Y')$-distance-biregular graph. If $D\ge 4$ then the following are equivalent.
\begin{enumerate}[label=(\roman*), font=\rm]
\item Either $\G$ is the subdivision graph of a $(\kappa,g)$-cage graph $\G^\circ=(X^\circ,\R^\circ)$ (where $\G^\circ$ has diameter $d\ge 2$, valency $\kappa\geq 3$ and girth $g\geq 3$) with $Y=X^\circ$ or it is an even-length cycle. 
\item For all $x\in Y$ and $y\in \G_2(x)$ the sets $\G_{i,i}(x,y)$ $(2\le i \leq D-2)$ are empty.
\item There exists $i$ $(2\le i \leq D-2)$ such that $p^2_{i,i}=0$.
\item $k'=2$.
\end{enumerate}
Moreover, if {\rm (i)--(iv)} hold with $Y$ as the set of vertices of a $(\kappa,g)$-cage graph, then $\G$ is $2$-$Y$-homogeneous, and the intersection array of the color class $Y$ is one of the follow two types: 
\begin{align*}
(k,1,k-1,1,k-1,\ldots,1,k-1;1,1,1,1,\ldots,1,1,2),\qquad \mbox{ for odd } D\\
(k,1,k-1,1,k-1,\ldots,k-1,1;1,1,1,1,\ldots,1,k),\qquad \mbox{ for even } D.
\end{align*}
\end{theorem}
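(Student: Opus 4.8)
The plan is to prove the equivalences $(i)\Leftrightarrow(ii)\Leftrightarrow(iii)\Leftrightarrow(iv)$ in a cycle, and then establish the ``Moreover'' part separately using Theorem~\ref{Gp}. The implication $(i)\Rightarrow(iv)$ is immediate from the definition of a subdivision graph, since every vertex of $Y'=\R^\circ$ corresponds to an edge of $\G^\circ$ and hence has valency exactly $2$ (and an even cycle is $2$-regular on both sides). For $(iv)\Rightarrow(ii)$, I would argue as follows. Fix $x\in Y$, $y\in\G_2(x)$ and recall the intersection diagram of rank $2$ from Figure~\ref{02}. Since $k'=2$, every vertex in an odd-distance class from $x$ has valency $2$, which forces $c_i'+b_i'=k'=2$, hence $c_i'=b_i'=1$ for the interior odd classes; combined with Lemma~\ref{4w} this forces $c_i=1$ for all $i$ in the relevant range. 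I would then feed this into formula~\eqref{aq}: with $c_{i-1}=c_i=1$ the factor $kb_1-b_ib_{i+1}-c_{i-1}c_i$ must be shown to vanish for $2\le i\le D-2$, giving $|\G_{i,i}(x,y)|=0$. The implication $(ii)\Rightarrow(iii)$ is trivial (take any such $i$, which exists because $D\ge 4$), and $(iii)\Rightarrow(iv)$ is the main analytic step.

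For $(iii)\Rightarrow(iv)$ I would use Lemma~\ref{Dc}: the hypothesis $p^2_{i,i}=0$ for some $i$ with $2\le i\le D-2$ is exactly the condition $kb_1-b_ib_{i+1}-c_{i-1}c_i=0$ via~\eqref{Dl}. The goal is to deduce $k'=2$, equivalently $b_1=1$ (since $k'=c_2+b_2$ is determined by the array, but more usefully $k'=b_1+1$ from the relation $c_2+b_2=k$, $c_2'+b_2'=k'$ together with~\eqref{r5}). I expect this to require combining the vanishing equation with the monotonicity bounds of Lemma~\ref{4w} and the counting identity of Lemma~\ref{jH} (which gives $c_j\le b_j$ for appropriate $j$). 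The cleanest route is probably to show that the vanishing of a single interior class $\G_{i,i}$ propagates: geometrically, if some $\G_{i,i}(x,y)$ is empty, then the two ``branches'' $\G_{i,i-2}$ and $\G_{i,i+2}$ cannot communicate, which via the diagram forces all the interior $c$-values down to $1$ and the branch widths down to their minimum, ultimately pinning $k'=2$.

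The implication $(iv)\Rightarrow(i)$ is the structural recognition step: a distance-biregular graph with $2$-valent vertices in $Y'$ is, by Lemma~\ref{ko}, either $K_{2,n}$ or the subdivision graph of a $(\kappa,g)$-cage graph. Since we assume $D\ge 4$, the case $K_{2,n}$ (which has $D=2$) is excluded, so $\G=S(\G^\circ)$ for a cage $\G^\circ$; the parameters $d\ge 2$, $\kappa\ge 3$ and the even-cycle degenerate case (where $\kappa=2$) are read off from Theorem~\ref{Gp} by matching eccentricities, exactly as in the proof of Proposition~\ref{km}.

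For the ``Moreover'' part, I would invoke Theorem~\ref{Gp} directly to read off the intersection array of $Y=X^\circ$: the odd-girth case yields the eccentricity $D=2d+1$ array $(k,1,k-1,1,\ldots,1,k-1;1,\ldots,1,2)$ and the even-girth case yields the $D=2d$ array $(k,1,k-1,\ldots,k-1,1;1,\ldots,1,k)$, which are precisely the two displayed types. Finally, to prove $2$-$Y$-homogeneity I would imitate the end of Proposition~\ref{km}: fix $x\in Y$, $y\in\G_2(x)$, $z\in\G_{i,i}(x,y)$ and observe that since $k'=2$, the unique vertex of $\G_{1,1}(x,y)$ has both its neighbours used up along the path realizing $\partial(x,y)=2$, so it has no neighbour in any $\G_{i,i}(x,y)$; hence $|\G_1(z)\cap\G_{1,1}(x,y)|=0$ for all admissible $i$ and all choices of $z$, which is the defining condition. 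The main obstacle I anticipate is the implication $(iii)\Rightarrow(iv)$: deducing global information ($k'=2$) from the vanishing of a single interior intersection class requires carefully exploiting the chain of inequalities in Lemmas~\ref{4w} and~\ref{jH} together with~\eqref{Dl}, and getting the direction of the propagation argument exactly right.
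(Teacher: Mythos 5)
Your skeleton (cycle of implications plus a separate ``Moreover'' step) is logically sound, and you correctly identify Lemma~\ref{ko}, Theorem~\ref{Gp}, Lemma~\ref{Dc} and \eqref{Dl} as the key tools; but the proposal leaves the theorem's central implication unproven. For (iii)$\Rightarrow$(iv) you only list ingredients and explicitly defer the argument (``I expect this to require\dots'', ``the main obstacle I anticipate''), and the mechanism you guess at --- a propagation along the diagram forcing all interior $c$-values down to $1$ --- is not how the implication works; no propagation is needed. The paper's argument is local at the single index $i$ and splits on parity. For even $i$: using $b_i+c_i=k$, rewrite $kb_1-b_ib_{i+1}-c_{i-1}c_i=c_i(b_1-c_{i-1})+b_i(b_1-b_{i+1})$; both summands are nonnegative because $b_1=k'-1\ge k'-b_{i-1}=c_{i-1}$ and $b_{i+1}\le b_i'\le b_{i-1}\le\cdots\le b_1$ by Lemma~\ref{4w}(ii); since $c_i,b_i>0$ each summand vanishes, so $c_{i-1}=b_{i+1}=k'-1$, whence (odd-index relations) $b_{i-1}=c_{i+1}=1$, and Lemma~\ref{4w}(i) gives $c_{i-1}\le c_i'\le c_{i+1}=1$, forcing $k'-1=1$. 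For odd $i$ one first uses \eqref{id6}: the vertices of the nonempty set $\G_{i+1,i-1}(x,y)$ (nonempty by \eqref{am}) have exactly $c_{i+1}-c_{i-1}$ neighbours in the empty set $\G_{i,i}(x,y)$, so $c_{i+1}=c_{i-1}$; then the analogous splitting $c_{i-1}(b_1-c_i)+b_{i+1}(b_1-b_i)=0$ yields $b_1=c_i=b_i$, hence $c_i=b_i=1$ and $k'=b_i+c_i=2$. None of this appears in your proposal.

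There is a second gap: your direct local argument for (iv)$\Rightarrow$(ii) does not go through. With $k'=2$ you get $b_1=1$ and $c_j=b_j=1$ for odd $j\le D-1$ (note the relevant relation is $c_j+b_j=k'$ for odd $j$, not $c_j'+b_j'=k'$, which holds for even $j$), but Lemma~\ref{4w} does \emph{not} force $c_i=1$ for even $i$: it only gives $c_i\le c_{i+1}'$, and nothing at this stage pins down $c_{i+1}'$ for odd $i+1$. Plugging what you do have into \eqref{aq}, the factor $kb_1-b_ib_{i+1}-c_{i-1}c_i$ vanishes automatically for even $i$ (it equals $k-b_i-c_i=0$), but for odd $i$ it reduces to $c_{i+1}-c_{i-1}$, and the equality of these even-indexed parameters is not a formal consequence of $k'=2$ plus Lemma~\ref{4w} --- it comes from the global structure. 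The fix is already inside your own proposal: prove (iv)$\Rightarrow$(i) first via Lemma~\ref{ko} (excluding $K_{2,n}$ since $D\ge4$), then read the arrays from Theorem~\ref{Gp}, where $b_1=1$, $b_ib_{i+1}=\kappa-1$ and $c_{i-1}c_i=1$ for $2\le i\le D-2$, and conclude (ii) by Lemma~\ref{Dc}; this is exactly the paper's route (i)$\Rightarrow$(ii). Finally, in the ``Moreover'' part you verify the wrong quantity: the homogeneity condition concerns $|\G_{i-1}(z)\cap\G_{1,1}(x,y)|$, not $|\G_{1}(z)\cap\G_{1,1}(x,y)|$; these coincide only at $i=2$. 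The cases $3\le i\le D-2$ are vacuous by (ii), but $i=D-1$ is not, since $\G_{D-1,D-1}(x,y)\ne\emptyset$ by \eqref{Dj}; there you need the distance argument: the unique vertex $w$ of $\G_{1,1}(x,y)$ has only $x$ and $y$ as neighbours, both at distance $D-1$ from $z$, so $\partial(z,w)\ne D-2$ and the count is $0$.
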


\begin{proof} 
(i)$\Rightarrow$(ii) If $\G^\circ$ is an even-length cycle the result immediately  follows. Let $\G^\circ=(X^\circ,\mathcal{R}^\circ)$ denote a $(\kappa,g)$-graph with diameter $d\ge 2$, $\kappa\geq 3$ and $g\geq 3$. Assume that $\G$ is the subdivision graph of $\G^\circ$ and recall that by assumption $D\ge 4$. The color partitions of $\G$ are $Y=X^\circ$ and  $Y'=\mathcal{R}^\circ$. Moreover, the intersection arrays of $\G$ are given in Theorem~\ref{Gp} and they depend on the parity of $g$. Thus, two cases must be considered.

{\sc Case 1.} Assume that $g$ is odd. Pick $x\in X^\circ=Y$, and note that $D=2d+1$. For every $i$ $(2\leq i \leq 2d-1)$ we have $b_ib_{i+1}=\kappa-1$ and $c_{i-1}c_i=1$. Since $b_1=1$, it follows that $\kappa b_1-b_ib_{i+1}-c_{i-1}c_i=0$ $(2\le i\le 2d-1=D-2)$. The result follows from Lemma~\ref{Dc}.

{\sc Case 2.} Assume that $g$ is even. Pick $x\in X^\circ=Y$, and note that $D=2d$. For every $i$ $(2\leq i \leq 2d-2)$ we have $b_ib_{i+1}=\kappa-1$ and $c_{i-1}c_i=1$. Since $b_1=1$, it follows that $\kappa b_1-b_ib_{i+1}-c_{i-1}c_i=0$ $(2\le i\le 2d-2=D-2)$. The result follows from Lemma~\ref{Dc}.

\medskip
(ii)$\Rightarrow$(iii) Trivial.

\medskip
(iii)$\Rightarrow$(iv) Let $x\in Y$, $y\in \G_2(x)$, pick $i$ $(2\le i\le D-2)$ and assume that $\G_{i,i}(x,y)$ is empty. We split the proof in two cases.

{\sc Case 1.} Assume that $i$ is even. By \eqref{Ea} and \eqref{aq},
$\G_{i,i}(x,y)=\emptyset$ if and only if $kb_1-b_ib_{i+1}-c_{i-1}c_i=0$. Since $b_i+c_i=k$, we have 
\begin{equation}
\label{De}
kb_1-b_ib_{i+1}-c_{i-1}c_i=c_{i}(b_1-c_{i-1})+b_{i}(b_1-b_{i+1})=0. 
\end{equation}
Note that $b_1=k'-c_1=k'-1\ge k'-b_{i-1}=c_{i-1}$. Also, by Lemma~\ref{4w}, we have $b_{i+1}\leq b_i'\leq b_{i-1}$ which yields $b_1\ge b_{i+1}$. Equation \eqref{De} now yields $b_{i+1}=b_1=c_{i-1}$. On the other hand, since $b_1=k'-1$, we have 
\begin{equation}
\label{Df}
c_{i-1}=b_{i+1}=k'-1.
\end{equation}
Note that, as $i$ is even, the integers $i-1$ and $i+1$ are odd, and we have $c_{i-1}+b_{i-1}=k'$ and $c_{i+1}+b_{i+1}=k'$. Now by \eqref{Df}, we get $b_{i-1}=c_{i+1}=1$, and since $c_{i-1}\le c_i'\le c_{i+1}$ and $b_{i+1}\le b_i'\le b_{i-1}$ (see Lemma~\ref{4w}) we have
\begin{equation}
\label{Dg}
c_{i-1}=b_{i+1}=1.
\end{equation}
The result now follows from \eqref{Df} and \eqref{Dg}.

{\sc Case 2.} Assume that $i$ is odd. By \eqref{id6}, note that every vertex $z\in\G_{i+1,i-1}(x,y)$ has exactly $c_{i+1}-c_{i-1}$ neighbours in $\G_{i,i}(x,y)$. Since by our assumption $\G_{i,i}(x,y)$ is empty, it must be $c_{i+1}=c_{i-1}$. Now, $k=b_{i+1}+c_{i+1}=b_{i+1}+c_{i-1}$ (since $i+1$ is even) and we have
\begin{equation}
\label{Dh}
kb_1-b_ib_{i+1}-c_{i-1}c_i=c_{i-1}(b_1-c_i)+b_{i+1}(b_1-b_i)=0.
\end{equation}
Since $b_1+c_1=k'=b_i+c_i$, $b_1=k'-1\ge k'-b_i=c_i$. Now, similarly as in Case~1 (from Lemma~\ref{4w}) it is not hard to see that $b_1\ge b_i$. Using the last two facts together with \eqref{Dh} we have $b_1=c_i=b_i$. In the end, since $k'-1=b_1=b_i=k'-c_i$ and $k'-1=b_1=c_i=k'-b_i$ we get $c_i=1=b_i$, and the result follows.

\medskip
(iv)$\Rightarrow$(i) By assumption $D\ge 4$, so $\G$ is not a complete bipartite graph $K_{2,n}$ for some $n\ge 1$. By Lemma~\ref{ko} it follows that $\G$ is either the subdivision graph of $(\kappa,g)$-cage graph (with diameter $d\ge 2$, valency $\kappa\geq 3$ and girth $g\geq 3$) or it is an even-length cycle. The result follows.
\end{proof}

\begin{corollary}
\label{Eu}
Let $\G^\circ$ denote a $(\kappa,g)$-cage graph for integers $\kappa \geq 2$ and $g\geq 3$, with vertex set $X^\circ$. Then, the subdivision graph of $\G^\circ$ is $2$-$X^\circ$-homogeneous.
\end{corollary}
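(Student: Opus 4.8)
The plan is to reduce the statement directly to Proposition~\ref{km} and Theorem~\ref{Db} by a short case analysis on the eccentricity $D$ of the vertices of $X^\circ$. First I would set $\G := S(\G^\circ)$ and, invoking Theorem~\ref{Gp}, record that $\G$ is $(Y,Y')$-distance-biregular with $Y = X^\circ$ and $Y' = \R^\circ$, and that the vertices of $Y'$ (one for each edge of $\G^\circ$) have valency $k' = 2$. Thus the hypothesis $k'=2$ demanded by Proposition~\ref{km}(ii) and by Theorem~\ref{Db}(iv) holds automatically, and $Y$ is the vertex set of the cage in both.

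Next I would pin down $D$ from Theorem~\ref{Gp}: writing $d$ for the diameter of $\G^\circ$, one has $D = 2d+1$ when $g$ is odd and $D = 2d$ when $g$ is even, where $d \ge 1$. A graph of even girth is not complete, so $g$ even forces $d \ge 2$ and hence $D = 2d \ge 4$; while $g$ odd gives $D = 2d+1 \ge 3$, with equality precisely when $d = 1$, i.e.\ precisely when $\G^\circ$ is a complete graph $K_{\kappa+1}$ (the $(\kappa,3)$-cage). In particular $D \ge 3$ always, consistent with the standing assumption of Notation~\ref{GN}.

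The case split then finishes the argument. If $D = 3$, then $\G^\circ = K_{\kappa+1} = K_n$ with $n = \kappa+1 \ge 3$ and $Y = X^\circ$ is its vertex set, so the \emph{Moreover} clause of Proposition~\ref{km} gives that $\G$ is $2$-$Y$-homogeneous. If $D \ge 4$, then conditions (i) and (iv) of Theorem~\ref{Db} hold with $Y = X^\circ$ equal to the vertex set of the cage (the degenerate $\kappa = 2$ family being the even-length cycle explicitly permitted in Theorem~\ref{Db}(i)), so its \emph{Moreover} clause again yields that $\G$ is $2$-$Y$-homogeneous. Since $Y = X^\circ$, this is exactly $2$-$X^\circ$-homogeneity.

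The only nonroutine point—and hence the main obstacle—is the bookkeeping at the boundary $D = 3$: Theorem~\ref{Db} is stated only for $D \ge 4$, so the diameter-$1$ cages (the complete graphs, whose subdivisions have $D = 3$) must be peeled off and routed through Proposition~\ref{km} instead. I would also verify the degenerate $\kappa = 2$ case, where the cage is a cycle and its subdivision is an even cycle: for $D \ge 4$ this is covered by the even-cycle alternative in Theorem~\ref{Db}(i), and the smallest instance $S(K_3) = C_6$ (where $\G^\circ = K_3$, $d=1$, $D=3$) falls under Proposition~\ref{km} with $n = 3$.
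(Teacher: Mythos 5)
Your proposal is correct and takes essentially the same approach as the paper: both use Theorem~\ref{Gp} to get that $S(\G^\circ)$ is distance-biregular with $k'=2$, then reduce via a case split to Proposition~\ref{km} when $D=3$ and to Theorem~\ref{Db} when $D\ge 4$. The only cosmetic differences are that the paper first peels off the $k=2$ (even-cycle) case and, for $D\ge 4$, rederives homogeneity from the emptiness of the sets $\G_{i,i}(x,y)$ given by Theorem~\ref{Db}(ii), whereas you fold the cycles into the two $D$-cases and invoke the \emph{Moreover} clauses of Proposition~\ref{km} and Theorem~\ref{Db} directly.
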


\begin{proof}
Let $\G^\circ=(X^\circ,\R^\circ)$ denote a $(\kappa,g)$-cage graph with $\kappa \geq 2$ and $g\geq 3$. By Theorem~\ref{Gp}, the subdivision graph $\G=S(\G^\circ)$ (of $(\kappa,g)$-cage graph $\G^\circ$) is distance-biregular with $k'=2$ and color partitions $X^\circ$ and $\R^\circ$. Pick $x\in X^\circ$ from $\G=S(\G^\circ)$, and consider two cases: {\sc Case 1.} Assume that $k=2$. Then $\G$ is an even length cycle, and therefore $\G$ is $2$-$X^\circ$-homogeneous. {\sc Case 2.} Assume that $k\ge 3$. Then $D=2$ is not possible. So, if $D=3$ then $\G^\circ$ is a complete graph $K_{\kappa+1}$ for some $\kappa\ge 3$, which, by Proposition~\ref{km}, implies that $\G$ is $2$-$X^\circ$-homogeneous. If $D\geq 4$, then by Theorem~\ref{Db}, for every $y\in \G_2(x)$ and $i$ $(2\leq i\leq D-2)$ the set $\G_{i,i}(x,y)$ is empty. Thus, for all $i$ $(2\le i\le D-1)$ and for all $x\in X^\circ$, $y\in\G_2(x)$ and $z\in\G_{i,i}(x,y)$, the number $|\G_{i-1}(z)\cap\G_{1,1}(x,y)|$ equals $0$. The result follows.
\end{proof}

\medskip
Note that if $k'=2$ then $\left|\G_2(x)\right|=\deg(x)$. Moreover, as we will see in Corollary~\ref{Dy}, for the general case when $D=3$, we have that graph $\G$ is $2$-$Y$-homogeneous if and only if $\left|\G_2(x)\right|=\deg(x)$.

\begin{remark}{\rm
Assume that $\G$ is a $(Y,Y')$-distance-biregular graph with $k'=2$. Since $c'_2+b'_2=k'$, we have $c_2'=1$. In Section~\ref{Eb} we study distance-biregular graphs with $k'\ge 3$ and $c_2'=1$.
}\end{remark}


\section{The scalars $\boldsymbol{\Delta_i}$, part I}
\label{D1}

In this section we define certain scalars $\Delta_i$ $(2\le i\le \min\left\lbrace D-1, D'-1 \right\rbrace)$, which can be computed from the intersection array of a given distance-biregular graph. These scalars play an important role, since from their values we can decide if a given distance-biregular graph is (almost) $2$-$Y$-homogeneous or not. The main ideas for both defining the scalar $\Delta_i$ and proving some related results are taken from the theory of bipartite distance-regular graphs. See for example \cite{BC,MMP1,MMP2,MP,PS,SP} for more details.

\begin{definition}{\rm
Let $\G$ denote a distance-biregular graph $\G$ with color partitions $(Y,Y')$, pick $i$ $(1\leq i \leq \min\left\lbrace D-1, D'-1 \right\rbrace)$, and define the scalar $\Delta_i=\Delta_i(Y)$ in the following way:
$$
\Delta_{i}=
\left\{\begin{array}{rl}
\displaystyle(b_{i-1}-1)(c_{i+1}-1)-p^i_{2,i}(c_2'-1) & \mbox{if } i \mbox{ is even,}\\
\displaystyle (b'_{i-1}-1)(c'_{i+1}-1)-p^i_{2,i}(c_2'-1) & \mbox{if } i \mbox{ is odd.} 
\end{array}\right. 
$$
}\end{definition}

\begin{remark}{\rm
If $D$ is odd, since for any $x\in Y$, $\G_D(x)\subseteq Y'$, we have $D'\ge D$, and with it for odd $D$, $\min\left\lbrace D-1, D'-1 \right\rbrace=D-1$. This yields that, if $D$ is even and $D'=D-1$ then the numbers $\Delta_i$ are defined for $2\le i\le D-2$, while in all other cases the scalars are defined for $2\le i\le D-1$. We already mentioned that, in the case when $D'=D-1$ we have $\gamma_{D-1}=c_2$ (see the proof of Corollary~\ref{Ek}), which yields that we do not need the definition of $\Delta_{D-1}$ for this case. Since we are working with (almost) $2$-$Y$-homogeneous DBG, we are not interested in the case when $i=D$.
}\end{remark}

The next inequality will be important and useful later.

\begin{lemma}
\label{4t}
Let $s_i,t_i\in\RR$ $(1\le i\le n)$ and $c\in \RR$. If $s_i+t_i=c$ for all $i$ $(1\le i\le n)$ then
	$$
	\left(\sum_{i=1}^n s_i \right)
	\left(\sum_{i=1}^n t_i \right)\ge 
	n\left(\sum_{i=1}^n s_it_i \right).
	$$
	Moreover, the equality holds if and only if all the numbers $s_i \; (1 \leq i \leq n)$ are equal to their arithmetic mean. 
\end{lemma}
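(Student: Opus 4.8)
The plan is to eliminate the variables $t_i$ by using the constraint $t_i=c-s_i$, reducing the whole inequality to a statement about the single family $s_1,\dots,s_n$. Writing $S=\sum_{i=1}^n s_i$, I would first record the three substitutions $\sum_i t_i = nc-S$, $\sum_i s_it_i = \sum_i s_i(c-s_i)=cS-\sum_i s_i^2$, so that the left-hand side becomes $S(nc-S)=ncS-S^2$ and the right-hand side becomes $n(cS-\sum_i s_i^2)=ncS-n\sum_i s_i^2$.

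Next I would subtract one from the other. The common term $ncS$ cancels, leaving
$$
\left(\sum_{i=1}^n s_i\right)\left(\sum_{i=1}^n t_i\right)-n\sum_{i=1}^n s_it_i
= n\sum_{i=1}^n s_i^2 - S^2.
$$
Thus the claimed inequality is exactly the assertion $n\sum_{i=1}^n s_i^2\ge\left(\sum_{i=1}^n s_i\right)^2$, which is the Cauchy--Schwarz inequality applied to the vectors $(s_1,\dots,s_n)$ and $(1,\dots,1)$ (equivalently, the nonnegativity of the variance, since $n\sum s_i^2-S^2=n\sum_i(s_i-S/n)^2$). This disposes of the inequality.

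For the equality clause I would invoke the equality condition of Cauchy--Schwarz: $n\sum s_i^2=(\sum s_i)^2$ holds if and only if $(s_1,\dots,s_n)$ is a scalar multiple of $(1,\dots,1)$, i.e.\ all the $s_i$ coincide, in which case each equals their common value $S/n$, the arithmetic mean. Honestly, there is no real obstacle here: the only idea is the substitution $t_i=c-s_i$ that collapses a two-variable statement into the one-line variance inequality, and the rest is bookkeeping. The one point to state carefully is the equality characterization, phrased in terms of the $s_i$ (not the $t_i$) exactly as the lemma demands.
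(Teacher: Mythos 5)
Your proof is correct and is essentially the same as the paper's: the paper's (very terse) proof also substitutes away the $t_i$ and reduces everything to computing $\sum_{i=1}^n (s_i-\overline{s})^2\ge 0$, which is exactly your variance identity $n\sum_i s_i^2-\bigl(\sum_i s_i\bigr)^2\ge 0$ with equality precisely when all $s_i$ equal their mean.
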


\begin{proof}
The idea for the proof we found in {{\rm\cite[Chapter~4]{CZ}}}.

Let $\ol{s}$ denote the average value of the numbers $s_i$ for $1\le i\le n$. Note that $\sum_{i=1}^n \ol{s}^2=\ol{s} \sum_{i=1}^n s_i$, and compute $\sum_{i=1}^n (s_i -\ol{s})^2$.
\end{proof}

\begin{lemma}
\label{Da}
With reference to Notation~\ref{GN}, let $\G$ denote a distance-biregular graph with $k'\ge 3$ and $D\ge 3$. Then the number $\Delta_{i}$ is nonnegative for every integer $i \; (1\leq i \leq \min\left\lbrace D-1, D'-1 \right\rbrace)$. 
\end{lemma}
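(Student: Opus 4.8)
The plan is to fix $x\in Y$ and $z\in\G_i(x)$, and to recast $\Delta_i$ as (a positive multiple of) a variance, so that its nonnegativity follows from the Cauchy--Schwarz-type inequality of Lemma~\ref{4t}. Since $i\le\min\{D-1,D'-1\}$, the vertex $z$ lies in $Y$ when $i$ is even and in $Y'$ when $i$ is odd; in either case the neighbours of $x$ split, according to their distance from $z$, into $P:=\G_{1,i-1}(x,z)$ and $Q:=\G_{1,i+1}(x,z)$ (bipartiteness rules out distance $i$), and the distance-regularity of $z$ gives $|P|=c_i(z)$ and $|Q|=b_i(z)$, where $c_i(z)+b_i(z)=k$. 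For each $y\in\G_{2,i}(x,z)$ I will set $f(y):=|\G_{1,1}(x,y)\cap\G_{i-1}(z)|$ and $g(y):=|\G_{1,1}(x,y)\cap\G_{i+1}(z)|$; since the $c_2$ common neighbours of $x$ and $y$ each lie at distance $i-1$ or $i+1$ from $z$, one has $f(y)+g(y)=c_2$ for every such $y$.

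The heart of the argument is three double counts over the $p^i_{2,i}$ vertices $y\in\G_{2,i}(x,z)$. Counting edges between $P$ and these $y$, and noting that a neighbour $y\ne x$ of a fixed $w\in P$ lies automatically in $\G_2(x)$ while the neighbours of $w$ at distance $i$ from $z$ number $b_{i-1}(z)$ (one of which is $x$), I get $\sum_y f(y)=c_i(z)(b_{i-1}(z)-1)$; the symmetric count between $Q$ and the $y$ gives $\sum_y g(y)=b_i(z)(c_{i+1}(z)-1)$. For the mixed sum I count triples $(w,w',y)$ with $w\in P$, $w'\in Q$ and $y$ a common neighbour of $w$ and $w'$ lying in $\G_{2,i}(x,z)$: since $w,w'$ are distinct neighbours of $x$ they lie at distance $2$ in $Y'$ and hence have exactly $c_2'$ common neighbours, precisely one of which is $x$, while the remaining $c_2'-1$ are forced into $\G_{2,i}(x,z)$ by the two adjacencies. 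This yields $\sum_y f(y)g(y)=c_i(z)\,b_i(z)\,(c_2'-1)$.

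With these three evaluations in hand I would apply Lemma~\ref{4t} with $n=p^i_{2,i}$, $s_y=f(y)$, $t_y=g(y)$ and constant sum $c=c_2$, obtaining
$$
c_i(z)\bigl(b_{i-1}(z)-1\bigr)\cdot b_i(z)\bigl(c_{i+1}(z)-1\bigr)
\;\ge\;
p^i_{2,i}\,c_i(z)\,b_i(z)\bigl(c_2'-1\bigr).
$$
Dividing by the positive quantity $c_i(z)\,b_i(z)$ (positivity of $b_i(z)$ uses exactly $i\le\min\{D-1,D'-1\}$) leaves $(b_{i-1}(z)-1)(c_{i+1}(z)-1)\ge p^i_{2,i}(c_2'-1)$, which is $\Delta_i\ge 0$ once the color-dependent numbers $c_i(z),b_{i-1}(z),c_{i+1}(z)$ are read off as $c_i,b_{i-1},c_{i+1}$ for even $i$ and as $c_i',b_{i-1}',c_{i+1}'$ for odd $i$, matching the two cases in the definition of $\Delta_i$. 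The degenerate case $p^i_{2,i}=0$ is immediate, since then $(b_{i-1}(z)-1)(c_{i+1}(z)-1)\ge 0$ trivially.

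I expect the main obstacle to be the evaluation of the mixed sum $\sum_y f(y)g(y)$: one must verify that every common neighbour other than $x$ of a pair $(w,w')\in P\times Q$ automatically satisfies both distance constraints $\partial(x,y)=2$ and $\partial(z,y)=i$, so that no over- or under-counting occurs, and that $x$ is the unique common neighbour to be discarded. The two single sums are routine once the roles of $c_i(z)$, $b_i(z)$, $b_{i-1}(z)$, $c_{i+1}(z)$ are tracked through the parity of $i$; Lemma~\ref{4t} then supplies the inequality and, as a bonus, its equality case will later characterize precisely when $\Delta_i=0$.
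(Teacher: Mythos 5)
Your proposal is correct and follows essentially the same route as the paper's proof: fix $x$ and $z\in\G_i(x)$, note $f(y)+g(y)=c_2$ for $y\in\G_{2,i}(x,z)$, evaluate the three sums $\sum f$, $\sum g$, $\sum fg$ by double counting, and apply Lemma~\ref{4t} to obtain $(b_{i-1}-1)(c_{i+1}-1)\ge p^i_{2,i}(c_2'-1)$ in each parity case. The only (harmless) deviations are that you treat $p^i_{2,i}=0$ as a trivial degenerate case, whereas the paper instead proves $p^i_{2,i}\neq 0$ from $k'\ge 3$, and that you spell out the mixed sum via triples $(w,w',y)$, which the paper leaves as routine.
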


\begin{proof}
The idea for the proof we found in \cite{BC}. Pick $x \in Y$ and fix an integer $i \; (1\leq i \leq \min\left\lbrace D-1,D'-1\right\rbrace)$. Let $z \in \G_i(x)$, consider the set $\G_{2,i}(x,z)$ and note that $\left|\G_{2,i}(x,z) \right|=p^i_{2,i}$. By \eqref{Dj}, \eqref{Di} and \eqref{Dl}, the scalars $p^{1}_{2,1}$ and $p^{D-1}_{2,D-1}$ are nonzero. Moreover, if $D\ge 4$, since $k'>2$, Theorem~\ref{Db} yields that the scalar $p^2_{j,j}$ is nonzero for $2\le j\le D-2$. By \eqref{Di}, the scalar $p^j_{2,j}$ is nonzero for all $j$, so the set $\G_{2,i}(x,z)$ is not empty. For any $y\in\G_{2,i}(x,z)$, $|\G_{1,1,i-1}(x,y,z)|+|\G_{1,1,i+1}(x,y,z)|=c_2$, so Lemma~\ref{4t} yields
\begin{equation}
\label{Dp}
\begin{split}
\left(\sum_{y \in \G_{2,i}(x,z)}|\G_{1,1,i-1}(x,y,z)|\right)
\left( \sum_{y \in \G_{2,i}(x,z)} |\G_{1,1,i+1}(x,y,z)|\right)\geq
\qquad\qquad\qquad\qquad\\
\qquad\qquad\qquad\qquad\ge|\G_{2,i}(x,z)| \left( \sum_{y \in \G_{2,i}(x,z)} |\G_{1,1,i-1}(x,y,z)|\cdot|\G_{1,1,i+1}(x,y,z)|\right).
\end{split}
\end{equation}
The sum $\sum_{y \in \G_{2,i}(x,z)} |\G_{1,1,i-1}(x,y,z)|$ represents the number of ordered pairs $(y,w)$ such that $y\in\G_{2,i}(x,z)$ and $w\in\G_{1,1,i-1}(x,y,z)$. We can compute this number in another way, that is, by counting first the number of vertices $w\in\G_{1,i-1}(x,z)$, and for every such $w$ counting the number of vertices $y\in\G_{2,1,i}(x,y,z)$. For that purpose we consider separately the cases depending on the parity of $i$, because for example, if $i$ is even then $z \in Y$, and if $i$ is odd then $z \in Y'$. If $i$ is even, since $x\in\G_i(z)$, $x$ has exactly $c_i$ neighbours $w$ at distance $i-1$ from $z$, and if $i$ is odd, $x$ has exactly $c_i'$ neighbours $w$ at distance $i-1$ from $z$. Next, for a given $w\in\G_{1,i-1}(x,z)$ we count the number of vertices $y\in\G_{2,i}(x,z)$ which are neighbours of $w$. Note that every vertex in $\G_{1,i}(w,z)$ is either $x$ or is at distance $2$ from $x$. Since $w\in\G_{i-1}(z)$ we therefore have $b_{i-1}-1$ possibilities for the choice of $y$ if $i$ is even or $b'_{i-1}-1$ possibilities for the choice of $y$ if $i$ is odd. Thus
\begin{equation}
\label{Dn}
\sum_{y \in \G_{2,i}(x,z)} |\G_{1,1,i-1}(x,y,z)|=
\left\{\begin{array}{rl}
c_i(b_{i-1}-1) & \mbox{if } i \mbox{ is even,}\\
c'_i(b'_{i-1}-1)  & \mbox{if } i \mbox{ is odd.}
\end{array}\right.
\end{equation}
Using the same technique as above, it is routine to compute
\begin{equation}
\label{Dr}
\sum_{y\in\G_{2,i}(x,z)} |\G_{i+1}(z)\cap\G_{1,1}(x,y)|=
\left\{\begin{array}{rl}
\displaystyle b_i(c_{i+1}-1) & \mbox{if } i \mbox{ is even,}\\
\displaystyle b'_i(c'_{i+1}-1)  & \mbox{if } i \mbox{ is odd,} 
\end{array}\right.
\end{equation}
and
\begin{equation}
\label{Ds}
\sum_{y\in\G_{2,i}(x,z)}|\G_{1,1,i-1}(x,y,z)|\cdot{|\G_{1,1,i+1}(x,y,z)|}=
\left\{\begin{array}{rl}
c_ib_i(c'_{2}-1) & \mbox{if } i \mbox{ is even, }\\
c_i'b'_i(c'_{2}-1)  & \mbox{if } i \mbox{ is odd. }
\end{array}\right. 
\end{equation}
The result follows.
\end{proof}

Note that, in the proof of Lemma~\ref{Da}, in two different ways we counted the number of ordered pairs $(y, w)$ which are at certain distances from three fixed vertices $x$, $y$ and $z$. For a ``different'' and more advanced technique of counting the number of ordered tuples we recommend \cite{WMS, NP}. Next we consider the case when the equality in \eqref{Dp} holds (the idea for Proposition~\ref{Dm} we found in \cite{BC}, where the author had studied bipartite distance-regular graphs).

\begin{proposition}
\label{Dm}
With reference to Notation~\ref{GN}, let $\G$ denote a $(Y,Y')$-distance-biregular graph with $k'\ge 3$ and $D\ge 3$. For any $i \; (2\leq i \leq \min\left\lbrace D-1, D'-1 \right\rbrace)$, the following are equivalent.
\begin{enumerate}[label=(\roman*),font=\rm]
\item The scalar $\Delta_i=0$. 
\item For all $x\in Y$, $y\in\G_2(x)$ and $z\in\G_{i,i}(x,y)$, the number $|\G_{1,1,i-1}(x,y,z)|$ is independent of the choice of $z$. In addition, we have
\begin{eqnarray}
\gamma_i:=|\G_{1,1,i-1}(x,y,z)|=
\left\{\begin{array}{rl}
c_i(b_{i-1}-1)/p^i_{2,i} & \mbox{if } i \mbox{ is even, }\\
c'_i(b'_{i-1}-1)/p^i_{2,i}  & \mbox{if } i \mbox{ is odd. }
\end{array}\right. \nonumber 
\end{eqnarray} 
\item There exist $x \in Y$ and $z\in\G_i(x)$ such that for all $y\in\G_{2,i}(x,z)$ the number 
$
|\G_{1,1,i-1}(x,y,z)|
$
is independent of the choice of $y$.
\end{enumerate}
\end{proposition}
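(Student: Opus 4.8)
The plan is to establish the cycle of implications $(i)\Rightarrow(ii)\Rightarrow(iii)\Rightarrow(i)$, and the whole argument rests on the equality case of Lemma~\ref{4t} together with the three sum formulas \eqref{Dn}, \eqref{Dr} and \eqref{Ds} already worked out in the proof of Lemma~\ref{Da}. The crucial observation is that, for a fixed $x\in Y$ and $z\in\G_i(x)$, if we set $s_y=|\G_{1,1,i-1}(x,y,z)|$ and $t_y=|\G_{1,1,i+1}(x,y,z)|$ as $y$ ranges over $\G_{2,i}(x,z)$, then $s_y+t_y=c_2$ is constant, so Lemma~\ref{4t} applies to this family. Moreover the resulting inequality \eqref{Dp} is, after substituting \eqref{Dn}, \eqref{Dr}, \eqref{Ds} and cancelling the nonzero factor $c_ib_i$ (respectively $c'_ib'_i$ when $i$ is odd), exactly the statement $c_ib_i\Delta_i\ge 0$. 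Hence for \emph{every} choice of $x$ and $z$ the inequality \eqref{Dp} reads $c_ib_i\Delta_i\ge0$, and equality holds in \eqref{Dp} if and only if $\Delta_i=0$.

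For $(iii)\Rightarrow(i)$ I would take the witnessing pair $x,z$ from (iii). By hypothesis all the numbers $s_y$ $(y\in\G_{2,i}(x,z))$ coincide, so they equal their arithmetic mean and Lemma~\ref{4t} yields equality in \eqref{Dp} for this $x,z$; by the preceding paragraph this forces $\Delta_i=0$. For $(i)\Rightarrow(ii)$ I would argue in the opposite direction: if $\Delta_i=0$ then \eqref{Dp} is an equality for every $x\in Y$ and $z\in\G_i(x)$, so by the equality clause of Lemma~\ref{4t} each $s_y$ equals the arithmetic mean $\left(\sum_{y}s_y\right)/|\G_{2,i}(x,z)|$, which by \eqref{Dn} equals $c_i(b_{i-1}-1)/p^i_{2,i}$ when $i$ is even and $c'_i(b'_{i-1}-1)/p^i_{2,i}$ when $i$ is odd. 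Since this common value depends only on the intersection numbers and not on the chosen $x,z$, the number $|\G_{1,1,i-1}(x,y,z)|$ equals the single constant $\gamma_i$ for every admissible triple $(x,y,z)$; in particular, fixing $x\in Y$ and $y\in\G_2(x)$ and letting $z$ range over $\G_{i,i}(x,y)=\G_i(x)\cap\G_i(y)$ leaves it unchanged, which is precisely (ii) together with its displayed formula.

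Finally $(ii)\Rightarrow(iii)$ is immediate, and this is where the displayed formula in (ii) does the work: (ii) asserts not merely that $|\G_{1,1,i-1}(x,y,z)|$ is independent of $z$, but that it equals the fixed number $\gamma_i$ expressed through the intersection numbers, hence it is independent of the whole triple. Choosing any $x\in Y$ and any $z\in\G_i(x)$ (note $\G_{2,i}(x,z)\ne\emptyset$, i.e.\ $p^i_{2,i}\ne0$, as established in the proof of Lemma~\ref{Da} via \eqref{Di} and Theorem~\ref{Db}), the value is then constant, equal to $\gamma_i$, as $y$ runs over $\G_{2,i}(x,z)$, giving (iii).

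I expect the only genuinely delicate point to be the bookkeeping that makes $(i)\Rightarrow(ii)$ yield a \emph{global} constant rather than merely a $z$-independent one: one must notice that the arithmetic mean produced by the equality case of Lemma~\ref{4t} is the same rational expression in the intersection numbers for all $x$ and $z$, and that the triples appearing in (ii) (fixing $x,y$, varying $z$) and in (iii) (fixing $x,z$, varying $y$) range over the same set of configurations $\partial(x,y)=2$, $\partial(x,z)=\partial(y,z)=i$. The parity split into the unprimed case ($i$ even, $z\in Y$) and the primed case ($i$ odd, $z\in Y'$) has to be carried through each step, but it is entirely parallel and introduces no new ideas.
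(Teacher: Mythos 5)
Your proposal is correct and follows essentially the same route as the paper's own proof: the cycle (i)$\Rightarrow$(ii)$\Rightarrow$(iii)$\Rightarrow$(i), driven by the equality clause of Lemma~\ref{4t} applied to the sums \eqref{Dn}, \eqref{Dr}, \eqref{Ds} with the nonvanishing of $p^i_{2,i}$ supplied by Theorem~\ref{Db} and \eqref{Di}. The only difference is presentational: you spell out explicitly that \eqref{Dp} reduces to $c_ib_i\Delta_i\ge 0$ (resp.\ $c'_ib'_i\Delta_i\ge 0$) and that the arithmetic mean is the same rational expression for every choice of $x$ and $z$, a point the paper leaves implicit in its one-line conclusion of (i)$\Rightarrow$(ii).
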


\begin{proof} 
Pick some integer $i \; (2\leq i \leq \min\left\lbrace D-1, D'-1 \right\rbrace )$. As in the proof of Lemma~\ref{Da}, it can be shown that the scalar $p^{i}_{2,i}$ is nonzero. 

(i)$\Ra$(ii) In the proof of Lemma~\ref{Da}, using Lemma~\ref{4t}, we showed that for any $x\in Y$ and $z\in\G_i(x)$, if $i$ is even then
$$
(b_{i-1}-1)(c_{i+1}-1)\ge p^i_{2,i}(c_2'-1),
$$
and if $i$ is odd then
$$
(b'_{i-1}-1)(c'_{i+1}-1)\ge p^i_{2,i}(c_2'-1).
$$
Note that equality holds if and only if all the numbers $|\G_{1,1,i-1}(x,y,z)|$ (where ${y \in \G_{2,i}(x,z)}$) are equal to their arithmetic mean, that is for every $y \in \G_{2,i}(x,z)$,
$$
|\G_{1,1,i-1}(x,y,z)|=\frac{1}{|\G_{2,i}(x,z)|} \sum_{w\in \G_{2,i}(x,z)} |\G_{1,1,i-1}(x,w,z)|.
$$
Since by assumption $\Delta_i=0$ the result follows from \eqref{Dn}.

\smallskip
(ii)$\Ra$(iii) Immediate.

\smallskip
(iii)$\Ra$(i) Pick $x\in Y$ and let $z\in \G_i(y)$. Since the number $p^i_{2,i}$ is nonzero, the set $\G_{2,i}(x,z)$ is nonempty. Assume that for all $y\in\G_{2,i}(x,z)$ the number $|\G_{1,1,i-1}(x,y,z)|$ is independent of $y$. Since these numbers do not depend on the choice of $y\in\G_{2,i}(x,z)$, the numbers $|\G_{1,1,i-1}(x,y,z)|$ are all equal to their average value, that is, for $y\in \G_{2,i}(x,z)$,
$$
|\G_{1,1,i-1}(x,y,z)|\cdot{|\G_{2,i}(x,z)|}=
\sum_{w \in \G_{2,i}(x,z)} |\G_{1,1,i-1}(x,w,z)|.
$$ 
Observe $|\G_{1,1,i-1}(x,y,z)|+|\G_{1,1,i+1}(x,y,z)|=c_2$ for all $y\in \G_{2,i}(x,z)$. Therefore, by Lemma \ref{4t}, equality holds in \eqref{Dp}.	Now the equality $\Delta_i=0$ follows from \eqref{Dn}, \eqref{Dr} and \eqref{Ds}.
\end{proof}


\section{Positivity of the scalars $\boldsymbol{\gamma_i}$ and equitable partition}
\label{oH}

Let $\G$ denote a $(Y,Y')$-distance-biregular graph with $k'\ge 3$ and $D\ge 3$. In this section we show that for $2$-$Y$-homogeneous $\G$, for every integer $i$  $(1\le i\le \min\left\lbrace D-1, D'-1\right\rbrace )$ and for all $x\in Y$, $y \in \G_2(x)$ and $z\in \G_{i,i}(x,y)$ the number $\gamma_i=|\G_{1,1,i-1}(x,y,z)|$ is nonzero. We also show that the collection of all the non-empty sets $\G_{i,j}(x,y)$ $(0 \leq i, j \leq D)$  is an equitable partition (for any $x\in Y$ and $y\in\G_2(x)$) if and only if $\G$ is $2$-$Y$-homogeneous.

\begin{lemma}
\label{nH}
With reference to Notation~\ref{GN}, let $\G$ denote a $(Y,Y')$-distance-biregular graph with $k'\ge 3$ and $D\ge 3$. Pick $x\in Y$, $y\in\G_2(x)$ and an integer $i$  $(2\le i\le \min\left\lbrace D-1, D'-1\right\rbrace )$. Then, for $z\in\G_{i,i}(x,y)$ the following holds:
\begin{equation}
\label{pH}
\sum_{v\in\G_{i-1,i-1,1}(x,y,z)} |\G_{1,1.i-2}(x,y,v)|=c_{i-1}'|\G_{1,1,i-1}(x,y,z)|.	
\end{equation}
\end{lemma}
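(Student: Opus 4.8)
The plan is to prove \eqref{pH} by a double-counting argument: I will read the left-hand side as the cardinality of a set $S$ of ordered pairs, and then re-count $S$ by fixing a different coordinate to recover the right-hand side. Concretely, let $S$ denote the set of ordered pairs $(v,w)$ with $v\in\G_{i-1,i-1,1}(x,y,z)$ and $w\in\G_{1,1,i-2}(x,y,v)$. Unwinding the definitions, $(v,w)\in S$ means exactly that $w$ is a common neighbour of $x$ and $y$ (so $w\in\G_{1,1}(x,y)\subseteq Y'$), that $v$ is a neighbour of $z$ satisfying $\partial(x,v)=\partial(y,v)=i-1$, and that $\partial(w,v)=i-2$. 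Summing $|\G_{1,1,i-2}(x,y,v)|$ over $v\in\G_{i-1,i-1,1}(x,y,z)$ is precisely $|S|$, so the left-hand side of \eqref{pH} equals $|S|$.

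Next I would re-count $S$ by first fixing $w\in\G_{1,1}(x,y)$ and counting the number $N(w)$ of admissible $v$. The crucial feature is that $w$ is adjacent to both $x$ and $y$, which lets a triangle-inequality-plus-parity argument control everything. Since $\G$ is bipartite and $v\sim z$, the distances $\partial(w,v)$ and $\partial(w,z)$ have opposite parities and differ by at most one, hence differ by exactly $1$; so whenever an admissible $v$ exists we have $\partial(w,z)\in\{i-3,i-1\}$. But $\partial(w,z)=i-3$ would force $\partial(x,z)\le\partial(x,w)+\partial(w,z)=i-2$, contradicting $\partial(x,z)=i$. Therefore $N(w)=0$ unless $\partial(w,z)=i-1$.

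Finally, assuming $\partial(w,z)=i-1$, I would show $N(w)=c'_{i-1}$ and then sum. Because $w\in Y'$ and $z\in\G_{i-1}(w)$, the set of neighbours $v$ of $z$ lying in $\G_{i-2}(w)$ is exactly $\G_{i-2,1}(w,z)$, which has size $c'_{i-1}$ by the definition of the intersection number of the color class $Y'$. The only real subtlety — and the step I expect to be the main obstacle — is verifying that for each such $v$ the remaining conditions $\partial(x,v)=\partial(y,v)=i-1$ are \emph{automatic}, so that they cut nothing down: routing through $w$ gives $\partial(x,v)\le\partial(x,w)+\partial(w,v)=i-1$, while bipartiteness together with $v\sim z$ and $\partial(x,z)=i$ forces $\partial(x,v)\in\{i-1,i+1\}$, whence $\partial(x,v)=i-1$, and the identical argument yields $\partial(y,v)=i-1$. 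Thus $N(w)=c'_{i-1}$ exactly when $\partial(w,z)=i-1$ and $N(w)=0$ otherwise, so
$$
|S|=\sum_{w\in\G_{1,1}(x,y)}N(w)=c'_{i-1}\,\bigl|\{w\in\G_{1,1}(x,y):\partial(w,z)=i-1\}\bigr|=c'_{i-1}\,|\G_{1,1,i-1}(x,y,z)|,
$$
which is the right-hand side of \eqref{pH}. Everything beyond the automaticity check is bookkeeping with the bipartite parity rule and triangle inequalities through the common neighbour $w$.
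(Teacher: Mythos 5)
Your proof is correct and takes essentially the same route as the paper's: both double-count pairs consisting of a common neighbour $w$ of $x,y$ and a neighbour $v$ of $z$ equidistant from $x$ and $y$ with $\partial(w,v)=i-2$, use triangle-inequality/parity arguments to show the side constraints (that $\partial(w,z)=i-1$ and that $\partial(x,v)=\partial(y,v)=i-1$) are automatic, and invoke $c'_{i-1}=|\G_{i-2,1}(w,z)|$ since $w\in Y'$. The only difference is bookkeeping: the paper builds the constraint $w\in\G_{i-1}(z)$ into the counted set and verifies it is automatic from the other side, whereas you start from the unconstrained sum and derive which $w$ contribute.
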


\begin{proof}
We count, in two different ways, the number $N$ of ordered pairs $(u,v)$ with $\partial(u,v)=i-2$, $u \in \G_{1,1,i-1}(x,y,z)$ and $v\in \G_{i-1,i-1,1}(x,y,z)$. Observe that for every fixed vertex $v\in \G_{i-1,i-1,1}(x,y,z)$ we have $\left|\G_{1,1,i-2}(x,y,v)\right|$ possible choices for $u$. For any such vertex $u$, $\partial(u,z)=i-1$. So, 
$$
N=\sum_{v\in\G_{i-1,i-1,1}(x,y,z)} |\G_{1,1,i-2}(x,y,v)|.
$$
We next fix $u \in \G_{1,1,i-1}(x,y,z)$ and observe that, for such $u$ we have $\left| \G_{i-2,1}(u,z)\right|$ possible choices for $v$. By the triangle inequality of distances in triangles $xvz$, $xuv$, $uvy$ and $zvy$, it turns out that $v\in \G_{i-1,i-1}(x,y)$. Note also that $u \in Y'$ and $z\in \G_{i-1}(u)$. Since $\G$ is distance-biregular, $N=c'_{i-1}|\G_{1,1,i-1}(x,y,z)|$. The claim follows. 
\end{proof}

\begin{theorem}
\label{mH}
With reference to Notation~\ref{GN}, let $\G$ denote a $(Y,Y')$-distance-biregular graph with $k'\ge 3$ and $D\ge 3$. Suppose that $\G$ is $2$-$Y$-homogeneous. Then, for every  integer $i$  $(1\le i\le \min\left\lbrace D-1, D'-1\right\rbrace )$ and for all $x\in Y$, $y \in \G_2(x)$ and $z\in \G_{i,i}(x,y)$ the number $\gamma_i=|\G_{1,1,i-1}(x,y,z)|$ is nonzero. 
\end{theorem}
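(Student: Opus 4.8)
The plan is to prove the statement by induction on $i$, converting the identity \eqref{pH} of Lemma~\ref{nH} into a recurrence for the $\gamma_i$. Since $\G$ is $2$-$Y$-homogeneous, every vertex $v$ occurring on the left of \eqref{pH} lies in $\G_{i-1,i-1}(x,y)$, so $|\G_{1,1,i-2}(x,y,v)|=\gamma_{i-1}$ by definition of $\gamma_{i-1}$; hence \eqref{pH} collapses to
\[
\gamma_{i-1}\cdot\bigl|\G_{i-1,i-1,1}(x,y,z)\bigr|=c'_{i-1}\,\gamma_i
\qquad(z\in\G_{i,i}(x,y)).
\]
Because $c'_{i-1}\ge 1$, this one relation already carries most of the weight: read with $\gamma_i\ne 0$ it forces $\gamma_{i-1}\ne 0$, so nonvanishing propagates \emph{downward}, and since $\gamma_1=1$ the set of indices with $\gamma_i\ne 0$ is an initial segment of $\{1,\dots,\min\{D-1,D'-1\}\}$. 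Thus the theorem is equivalent to the assertion that for each admissible $i$ there is at least one edge joining the consecutive diagonal cells $\G_{i-1,i-1}(x,y)$ and $\G_{i,i}(x,y)$ (both nonempty by \eqref{Dj} and Theorem~\ref{Db}, using $k'\ge 3$), since such an edge makes $|\G_{i-1,i-1,1}(x,y,z)|$ positive for some $z$ and hence $\gamma_i\ne0$.

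For the base step I would verify $\gamma_2\ne 0$ directly. A common neighbour $w\in\G_{1,1}(x,y)$ has valency $k'\ge 3$, so apart from $x$ and $y$ it has at least $k'-2\ge 1$ further neighbours; each such neighbour is at distance $2$ from both $x$ and $y$ and therefore lies in $\G_{2,2}(x,y)$. Any one of them, call it $z$, has $w\in\G_{1,1,1}(x,y,z)$, whence $\gamma_2\ge 1$. This is exactly the step where the hypothesis $k'\ge 3$ is used.

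For the inductive step I would classify the neighbours of a fixed $v\in\G_{i-1,i-1}(x,y)$ by their distances to $x$ and to $y$. Since $x,y\in Y$, from either base point $v$ has exactly $b_{i-1}$ neighbours moving outward and $c_{i-1}$ moving inward (the same $Y$-intersection numbers), and solving the four resulting count equations shows that the number of neighbours of $v$ lying in $\G_{i,i}(x,y)$ equals $b_{i-1}-c_{i-1}+\delta_{i-1}$, where $\delta_{i-1}:=|\G_{i-2,i-2,1}(x,y,v)|$. The recurrence above at level $i-1$ gives $\delta_{i-1}=c'_{i-2}\gamma_{i-1}/\gamma_{i-2}>0$ by the induction hypothesis, so the desired edge certainly exists whenever $b_{i-1}\ge c_{i-1}$. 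By Lemma~\ref{jH} we have $c_{i-1}\le b_{i-1}$ as soon as $2(i-1)\le D$, which closes the induction for all $i\le \tfrac{D}{2}+1$.

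The hard part is the complementary range $i>\tfrac{D}{2}+1$, where $c_{i-1}$ may exceed $b_{i-1}$ and the positive term $\delta_{i-1}$ must be shown to compensate. Here I would lean on the downward propagation established in the first paragraph: it suffices to secure nonvanishing at the single top index $t=\min\{D-1,D'-1\}$ (or to use $\gamma_{D-1}=c_2$ when $D'=D-1$, as in the proof of Corollary~\ref{Ek}), after which the recurrence fills in everything below. Proving this endpoint case is the main obstacle. If it failed we would have a threshold $m>D/2$ with $\gamma_m\ne 0$ and $\gamma_{m+1}=0$, which by the local count forces the rigid identity $c_m-b_m=\delta_m\ge 1$ at \emph{every} vertex of $\G_{m,m}(x,y)$; I expect the contradiction to come not from a single inequality but from confronting this rigidity with the global constraints on the intersection numbers—Lemma~\ref{4w}, Proposition~\ref{Eo}, the path-counting identities \eqref{Dk}--\eqref{Du}, and the homogeneity relations of Lemmas~\ref{gH} and \ref{vH}.
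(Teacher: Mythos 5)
Your first three paragraphs are sound, and partially overlap the paper's own argument: the recurrence $\gamma_{i-1}\,|\G_{i-1,i-1,1}(x,y,z)|=c'_{i-1}\gamma_i$ extracted from \eqref{pH}, the downward propagation of nonvanishing, the direct verification $\gamma_2\ge 1$ from $k'\ge 3$ (in fact cleaner than the paper's base step, which goes through Proposition~\ref{Dm}), and the neighbour count $|\G_{i,i,1}(x,y,v)|=b_{i-1}-c_{i-1}+\delta_{i-1}$ for $v\in\G_{i-1,i-1}(x,y)$ are all correct, and together with Lemma~\ref{jH} they do settle the range $2(i-1)\le D$. But the proposal is not a proof: for $i>D/2+1$ you only record an expectation that the rigid identity $c_m-b_m=\delta_m$ will contradict ``global constraints,'' without deriving any contradiction, and the tools you list do not obviously do the job --- Lemma~\ref{gH} in particular \emph{assumes} that $\gamma_2$ and $\gamma_i$ are nonzero, so invoking it there would be circular. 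This unproven endpoint case is precisely the content of the theorem in the hard range, so the gap is genuine.

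The ingredient you are missing is Proposition~\ref{Dm}. Since $\G$ is $2$-$Y$-homogeneous, $\Delta_i=0$ for every admissible $i$, and Proposition~\ref{Dm}(ii) then gives the closed formula $\gamma_i=c_i(b_{i-1}-1)/p^i_{2,i}$ for even $i$ and $\gamma_i=c'_i(b'_{i-1}-1)/p^i_{2,i}$ for odd $i$, with $p^i_{2,i}\ne 0$ by Theorem~\ref{Db} and \eqref{Di}. Hence at a minimal index $i$ with $\gamma_{i-1}\ne 0$ and $\gamma_i=0$ one immediately gets $b_{i-1}=1$ ($i$ even) or $b'_{i-1}=1$ ($i$ odd), with no comparison of $b_{i-1}$ against $c_{i-1}$ ever needed. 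The paper then finishes with exactly the local counts you already have: your recurrence gives $|\G_{i-1,i-1,1}(x,y,z)|=0$ for all $z\in\G_{i,i}(x,y)$; for $i$ even, Lemma~\ref{4w} forces $b_{i+1}\le b'_i\le b_{i-1}=1$, and \eqref{id6} then shows vertices of $\G_{i,i}(x,y)$ also have no neighbours in $\G_{i-1,i+1}(x,y)\cup\G_{i+1,i-1}(x,y)$, so $\G_{i,i}(x,y)$ would be empty, contradicting Theorem~\ref{Db}; for $i$ odd, Lemma~\ref{4w} gives $b_i\le b'_{i-1}=1$, and your count applied at level $i$ (with $\delta_i=0$) gives $b_i-c_i\ge 0$, forcing $c_i=1$ and $k'=b_i+c_i=2$, a contradiction. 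So your skeleton can be completed, but only by adding the $\Delta_i=0$ formula of Proposition~\ref{Dm}; as submitted, the argument proves the theorem only for $i\le D/2+1$.
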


\begin{proof} 
Since $\G$ is $2$-$Y$-homogeneous, for all $i$  $(1\le i\le \min\left\lbrace D-1, D'-1\right\rbrace )$ and for all $x\in Y$, $y \in \G_2(x)$ and $z\in \G_{i,i}(x,y)$ the number $\gamma_i=|\G_{1,1,i-1}(x,y,z)|$ does not depend on the choice of $x, y$ and $z$. It follows from the definition that $\gamma_1=1$. For $i\geq 2$, we will proceed by contradiction. Suppose first $\gamma_2=0$. The number $p^2_{2,2}$ is not zero by \eqref{Dj} and Theorem~\ref{Db}. Then, since $c_2$ is a positive integer, by Proposition \ref{Dm} we have $b_1=1$. Therefore, $k'=b_1+c_1=2$ which is a contradiction. If $D=3$ we are done. Otherwise, assume that there exists $\ell$ $(3\leq \ell \leq \min\left\lbrace D-1, D'-1 \right\rbrace )$ such that $\gamma_\ell=0$. Without loss of generality, we can pick an integer $i$ $(3\leq i \leq \min\left\lbrace D-1, D'-1 \right\rbrace )$ such that $\gamma_{i-1}\ne 0$ and $\gamma_i=0$. By Theorem~\ref{Db} and \eqref{Di}, $p^i_{2,i}$ is not zero. As $c_i$ and $c'_i$ are positive, by Proposition \ref{Dm} either $b_{i-1}=1$ (if $i$ is even) or $b'_{i-1}=1$ (if $i$ is odd). Now, by Lemma~\ref{nH}, for every $x\in Y$ with $y\in\G_2(x)$ and $z \in \G_{i,i}(x,y)$, 
$$
c_{i-1}' \gamma_i = \gamma_{i-1} |\G_{i-1,i-1,1}(x,y,z)|
$$
and since the number $\gamma_{i-1}$ is nonzero, we have $|\G_{i-1,i-1,1}(x,y,z)|=0$. We next consider two cases.
	
{\sc Case 1.} Suppose that $i$ is even.  By Lemma \ref{4w} we have $b_{i-1}\ge b_{i+1}$ and so $b_{i+1}=1$. Now, from \eqref{id6}, for every vertex $z\in\G_{i+1,i-1}(x,y)\cup\G_{i-1,i+1}(x,y)$ we have $|\G_{i,i,1}(x,y,z)|=b_{i-1}-b_{i+1}=0$. Hence,  $|\G_{i-1,i+1,1}(x,y,z)|=|\G_{i+1,i-1,1}(x,y)|=0$ for every $z\in \G_{i,i}(x,y)$.
	This yields the set $\G_{i,i}(x,y)$ is empty, contradicting Theorem~\ref{Db}. 
	
{\sc Case 2.} Assume next that $i$ is odd. Since $b_{i-1}'=1$, by Lemma~\ref{4w} we have $b_i=1$. Recall that arbitrary $z\in\G_{i,i}(x,y)$ does not have neighbours in $\G_{i-1,i-1}(x,y)$. Considering the intersection diagram of rank $2$ (see Figure~\ref{02}) this implies that $z\in\G_{i,i}(x,y)$ has exactly $b_i-c_i$ neighbours in $\G_{i+1,i+1}(x,y)$, which yields $c_i=1$. Thus, $k'=b_i+c_i=2$ contradicting our assumption $k'\ge 3$. 
	
The claim follows. 
\end{proof}

\begin{proposition}
\label{Ex}
With reference to Notation~\ref{GN}, let $\G$ denote a $(Y,Y')$-distance-biregular graph with $k'\ge 3$ and $D\ge 3$. The following are equivalent.
\begin{enumerate}[label=(\roman*),font=\rm]
\item $\G$ is $2$-$Y$-homogeneous. 
\item For all integers $i$ $(1\le i\le\min\lbrace D-1,D'-1\rbrace)$ and for all $x\in Y$, $y\in\G_2(x)$ and $z\in\G_{i,i}(x,y)$, the scalar $\delta_i=|\G_{i-1,i-1,1}(x,y,z)|$ is nonzero and it is independent of the choice of $x$, $y$, and $z$.
\end{enumerate}
\end{proposition}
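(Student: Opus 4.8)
The plan is to extract both implications from a single recursion linking the scalars $\gamma_i=|\G_{1,1,i-1}(x,y,z)|$ (the defining scalars of $2$-$Y$-homogeneity) and $\delta_i=|\G_{i-1,i-1,1}(x,y,z)|$, and that recursion is essentially Lemma~\ref{nH}. First I would record the observation that powers it: fix $x\in Y$, $y\in\G_2(x)$ and an integer $i$ with $2\le i\le\min\{D-1,D'-1\}$, and pick $z\in\G_{i,i}(x,y)$. Every vertex $v$ that occurs in the sum of Lemma~\ref{nH} lies in $\G_{i-1,i-1,1}(x,y,z)$, hence in particular $v\in\G_{i-1,i-1}(x,y)$ with $x\in Y$ and $y\in\G_2(x)$; therefore the inner count $|\G_{1,1,i-2}(x,y,v)|$ is exactly a level-$(i-1)$ instance of $\gamma$. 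Whenever the level-$(i-1)$ scalar is already known to be independent of the choices, the sum collapses and Lemma~\ref{nH} becomes
\[
c'_{i-1}\,\gamma_i(x,y,z)=\delta_i(x,y,z)\,\gamma_{i-1}.
\]
Since $c'_{i-1}\ge 1$ for $i\ge 2$, this one identity transfers constancy back and forth between the two families.

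For (i)$\Ra$(ii) I would assume $\G$ is $2$-$Y$-homogeneous, so each $\gamma_j$ $(1\le j\le\min\{D-1,D'-1\})$ is independent of the choices and, by Theorem~\ref{mH}, nonzero. Then in the displayed identity both $\gamma_i$ and $\gamma_{i-1}$ are constants, so $\delta_i=c'_{i-1}\gamma_i/\gamma_{i-1}$ is independent of the choices; it is nonzero because $c'_{i-1}\ge 1$ and $\gamma_i,\gamma_{i-1}\ne 0$. The degenerate index $i=1$ gives $\delta_1=|\G_{0,0,1}(x,y,z)|=0$ for all choices and is disposed of separately.

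For the converse (ii)$\Ra$(i) I would prove by induction on $i$ that every $\gamma_i$ is independent of the choices; once this holds throughout the relevant range, $\G$ is $2$-$Y$-homogeneous by definition, and the nonvanishing of the $\gamma_i$ then follows from Theorem~\ref{mH}. The base case is $\gamma_1=1$. For the inductive step the inductive hypothesis makes all inner counts in Lemma~\ref{nH} equal to the constant $\gamma_{i-1}$, so the sum is $\delta_i\,\gamma_{i-1}$; since $\delta_i$ is constant by hypothesis~(ii), the displayed identity forces $\gamma_i(x,y,z)=\delta_i\,\gamma_{i-1}/c'_{i-1}$ to be independent of $x,y,z$. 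The top level $\gamma_{D-1}$ in the case $D'=D-1$ lies outside the $\delta$-range and is handled via the equality $\gamma_{D-1}=c_2$ from the proof of Corollary~\ref{Ek}.

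The main obstacle is bookkeeping rather than conceptual: one must verify that each inner count $|\G_{1,1,i-2}(x,y,v)|$ genuinely is a level-$(i-1)$ value of $\gamma$ (that is, that $v\in\G_{i-1,i-1}(x,y)$ with $x\in Y$, $y\in\G_2(x)$, so Lemma~\ref{nH} applies as intended), keep careful track of the index ranges in the two regimes $\min\{D-1,D'-1\}=D-1$ and $D'=D-1$, and dispose of the boundary value $i=1$, where $\delta_1$ vanishes identically. Apart from these range issues, the whole argument is a direct two-directional application of the single recursion above.
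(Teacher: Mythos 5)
Your proof is correct and takes essentially the same route as the paper: both directions hinge on the recursion of Lemma~\ref{nH}, with Theorem~\ref{mH} supplying constancy and nonvanishing of the $\gamma_i$ for (i)$\Rightarrow$(ii), and induction on $i$ through that same recursion (giving $\gamma_i=\delta_i\gamma_{i-1}/c'_{i-1}$) for (ii)$\Rightarrow$(i). Your explicit handling of the boundary cases --- the degenerate index $i=1$ and the top level $\gamma_{D-1}=c_2$ when $D'=D-1$ --- is if anything more careful than the paper's, which leaves these points implicit.
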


\begin{proof} 	
The idea for the proof we found in \cite[Theorem 16]{BC}, where the author studied bipartite distance-regular graphs.

(i)$\Ra$(ii) Assume that $\G$ is $2$-$Y$-homogeneous. By Theorem~\ref{mH}, for every $i$ $(1\le i\le \min\left\lbrace D-1, D'-1\right\rbrace )$ and for all $x\in Y$, $y \in \G_2(x)$ and $z\in \G_{i,i}(x,y)$, $\gamma_i=|\G_{1,1,i-1}(x,y,z)|$  is nonzero and does not depend on the choice of $x$, $y$ and $z$. Moreover, by Lemma  \ref{nH}, $$c_{i-1}' \gamma_i = \gamma_{i-1} |\G_{i-1,i-1,1}(x,y,z)|.$$
Since $\gamma_{i}, \gamma_{i-1}, c_{i-1}'\;  (2\leq i \leq \min\left\lbrace D-1, D'-1 \right\rbrace )$ are nonzero, we have $|\G_{i-1,i-1,1}(x,y,z)|$ is nonzero and does not depend on the choice of $x$, $y$ and $z$.

\medskip
(ii)$\Ra$(i) Assume that for all integers $h$ $(1\le h\le\min\lbrace D-1,D'-1\rbrace)$ and for all $x\in Y$, $y\in\G_2(x)$, $z\in\G_{h,h}(x,y)$, the scalar $\delta_h=|\G_{1}(z)\cap\G_{h-1,h-1}(x,y)|$ is independent of the choice of $y$, $z$, and it is nonzero. Note that $\gamma_1=1$ and $\gamma_2=\delta_2$. Setting $i=2,3,\ldots,\ell$ (where $\ell\le\min\lbrace D-1,D'-1\rbrace$) in \eqref{pH}, and using mathematical induction we get
$$
\underbrace{|\G_{\ell-1}(z)\cap\G_{11}(x,y)|}_{=\gamma_\ell}=\frac{1}{c'_{\ell-1}} \gamma_{\ell-1}\delta_\ell,
$$
and with it
$$
\gamma_i=\frac{\delta_2\delta_3\cdots\delta_i}{c_2'c_3'\cdots c_{i-1}'}\qquad(1\le i\le\min\lbrace D-1,D'-1\rbrace).
$$
The result follows.
\end{proof}

\begin{remark}{\rm
\label{Ey}
Let $\G$ denote a $(Y,Y')$-distance-biregular graph with $k'\ge 3$ and $D\ge 3$. Fix $i$ $(2\leq i \leq \min\left\lbrace D-1, D'-1 \right\rbrace)$. From the results of this paper we do not know if the following two claims are equivalent.
\begin{enumerate}[label=(\roman*),font=\rm]
\item The scalar $\Delta_i=0$.
\item There exist $x \in Y$ and $y\in\G_2(x)$ such that for all $z\in\G_{i,i}(x,y)$ the number $|\G_{i-1,i-1,1}(x,y,z)|$ is independent of the choice of $z$.
\end{enumerate}
}\end{remark}

\begin{corollary}
\label{Fg}
With reference to Notation~\ref{GN}, let $\G$ denote a $(Y,Y')$-distance-biregular graph with $k'\ge 3$ and $D\ge 3$.  The following are equivalent.
\begin{enumerate}[label=(\roman*),font=\rm]
\item The collection of all non-empty sets $\G_{i,j}(x,y)$ $(0\le i,j\le D)$ is an equitable partition of $\G$ (for any $x\in Y$ and $y\in\G_2(x)$). 
\item $\G$ is $2$-$Y$-homogeneous.
\end{enumerate}
\end{corollary}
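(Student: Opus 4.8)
The plan is to prove the two implications of Corollary~\ref{Fg} by connecting the equitable-partition condition to the already-established characterization of $2$-$Y$-homogeneity. The key observation is that the cells of the rank-$2$ intersection diagram (Figure~\ref{02}) are exactly the non-empty sets $\G_{i,j}(x,y)$ with $|i-j|\in\{0,2\}$, and that the adjacency structure between these cells is almost entirely forced by the intersection array of $\G$ alone. The numbers that are \emph{not} automatically constant are precisely the edge-counts between the diagonal cells $\G_{i,i}(x,y)$ and the adjacent cells $\G_{i-1,i-1}(x,y)$, and these are governed by the scalars $\gamma_i$ and $\delta_i$ studied above. So the whole corollary reduces to showing that the partition is equitable if and only if these particular transition numbers are constant.

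First I would establish (ii)$\Ra$(i). Assuming $\G$ is $2$-$Y$-homogeneous, Proposition~\ref{Ex} gives that for every $i$ and every choice of $x,y,z$ the scalar $\delta_i=|\G_{i-1,i-1,1}(x,y,z)|$ is nonzero and independent of the choices; moreover Theorem~\ref{mH} gives the same for $\gamma_i=|\G_{1,1,i-1}(x,y,z)|$. Fix $x\in Y$ and $y\in\G_2(x)$. I would then verify that every cell of the collection $\{\G_{i,j}(x,y)\}$ has a constant number of neighbours in every other cell. For a vertex $z\in\G_{i,i}(x,y)$, its neighbours lie in cells $\G_{i\pm1,i\pm1}(x,y)$; the counts into $\G_{i-1,i-1}(x,y)$ and into $\G_{i+1,i+1}(x,y)$ are $\delta_i$ and $c_i-\delta_i$ respectively (using that a neighbour of $z$ at distance $i-1$ from both $x$ and $y$ is counted by $\delta_i$, and the remaining $c_i$ neighbours toward $x$ go to $\G_{i+1,i+1}$ by the bipartite/triangle-inequality bookkeeping visible in Figure~\ref{02}), while the counts into the off-diagonal cells $\G_{i-1,i+1},\G_{i+1,i-1}$ are differences of intersection numbers as in \eqref{id6}. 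For vertices in the off-diagonal cells $\G_{i,i-2}(x,y)$ the relevant transition numbers are already constant by \eqref{id6} and the standard diagram. Collecting these, every cell-to-cell count is constant, so the partition is equitable.

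Next I would prove the contrapositive-friendly direction (i)$\Ra$(ii). Assume the collection of non-empty $\G_{i,j}(x,y)$ is equitable for every $x\in Y$, $y\in\G_2(x)$. In particular, the number of neighbours that a vertex $z\in\G_{i,i}(x,y)$ has in the cell $\G_{i-1,i-1}(x,y)$ is a constant independent of $z$ — this is exactly $\delta_i=|\G_{i-1,i-1,1}(x,y,z)|$. Since the partition is equitable and all the cells in question are non-empty (by \eqref{Dj}, \eqref{am}--\eqref{av}, and, for $D\ge4$, Theorem~\ref{Db} guaranteeing the diagonal cells are non-empty when $k'\ge3$), this constant must be nonzero: if $\delta_i$ were zero for some $i$, then running the induction of Lemma~\ref{nH} / Proposition~\ref{Ex} backwards would force a diagonal cell to be empty, contradicting its non-emptiness. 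Hence $\delta_i$ is nonzero and independent of the choices for every $i$, which is exactly condition (ii) of Proposition~\ref{Ex}; that proposition then yields that $\G$ is $2$-$Y$-homogeneous.

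The main obstacle I anticipate is the bookkeeping in direction (i)$\Ra$(ii): I must argue that equitability forces $\delta_i$ to be \emph{nonzero}, not merely constant, since Proposition~\ref{Ex}(ii) demands both. The cleanest route is to note that equitability makes $\delta_i$ constant, and then invoke the non-emptiness of the diagonal cells together with the recursion $c'_{i-1}\gamma_i=\gamma_{i-1}\delta_i$ from Lemma~\ref{nH}: a vanishing $\delta_i$ would propagate (as in the proof of Theorem~\ref{mH}) to an empty $\G_{i,i}(x,y)$, which is excluded. One should also be careful that equitability as stated quantifies over \emph{all} base pairs $x,y$, which is precisely the ``for all $x,y,z$'' strength needed to feed Proposition~\ref{Ex}, so no additional homogeneity-in-the-base-point argument is required. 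With these points handled, the corollary follows directly from Proposition~\ref{Ex}.
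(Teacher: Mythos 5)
Your direction (ii)$\Ra$(i) is essentially the paper's argument: constancy and positivity of $\delta_i$ from Proposition~\ref{Ex}, the automatic counts from \eqref{id6}, and the diagram bookkeeping of Figure~\ref{02}. (One slip there: for $z\in\G_{i,i}(x,y)$ the number of neighbours in $\G_{i+1,i+1}(x,y)$ is not $c_i-\delta_i$ but the valency of $z$ minus $\delta_i-2(c_i-\delta_i)$, i.e.\ $b_i-c_i+\delta_i$ with the appropriate primes; this does not affect constancy, which is all you need.)

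The genuine gap is in (i)$\Ra$(ii), precisely at the sentence claiming that ``no additional homogeneity-in-the-base-point argument is required.'' Equitability of the partition for every base pair gives you, for each fixed $(x,y)$, a constant $\delta_i(x,y)=|\G_{i-1,i-1,1}(x,y,z)|$ independent of $z$ --- but a priori this constant may vary with the pair $(x,y)$. Proposition~\ref{Ex}(ii), which you invoke, demands a single value independent of $x$, $y$ \emph{and} $z$ (its proof uses this: $\gamma_i=\delta_2\cdots\delta_i/(c_2'\cdots c_{i-1}')$ is only well defined as a graph parameter if the $\delta_j$ do not depend on the base pair). So your appeal to Proposition~\ref{Ex} is not justified as written. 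There are two ways to close the gap. (a) Prove base-point independence directly by edge counting between consecutive diagonal cells: $\delta_2\,p^2_{2,2}=(k'-2)c_2$ and $\delta_i\,p^2_{i,i}=\bigl(b_{i-1}-c_{i-1}+\delta_{i-1}\bigr)p^2_{i-1,i-1}$ (with primes according to parity); since the cell sizes $p^2_{j,j}$ are determined by the intersection array, induction gives that each $\delta_i$ is as well. (b) Cleaner, and what the paper's machinery is built for: for a \emph{single} pair $(x,y)$, equitability plus the recursion \eqref{pH} of Lemma~\ref{nH} gives by induction that $\gamma_i=|\G_{1,1,i-1}(x,y,z)|$ is independent of $z$; Theorem~\ref{Eg} (whose hypothesis is exactly an existence statement over base pairs) then yields $\Delta_i=0$, and Corollary~\ref{Ek} concludes that $\G$ is $2$-$Y$-homogeneous. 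Route (b) also makes your nonvanishing argument unnecessary --- which is just as well, since ``running the induction of Lemma~\ref{nH} backwards'' is not by itself a proof that $\delta_i\neq 0$: the actual argument (see the proof of Theorem~\ref{mH}) needs Proposition~\ref{Dm} and a parity case analysis.
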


\begin{proof}
(i)$\Ra$(ii) Immediate.

(ii)$\Ra$(i) Immediate from \eqref{id6}, Theorem~\ref{Db} and Proposition~\ref{Ex} (see Figure~\ref{02}).
\end{proof}


\section{The scalars $\boldsymbol{\Delta_i}$, part II}
\label{Ev}

The main result of this section is somehow unexpected: for a fixed $i$, the scalar $\Delta_i=0$ if and only if there exist $x \in Y$ and $y\in\G_2(x)$ such that for all $z\in\G_{i,i}(x,y)$ the number $|\G_{1,1,i-1}(x,y,z)|$ is independent of the choice of $z$. The proof is tedious and time consuming. The main technique is counting the number of ordered pairs in two different ways. As a corollary we get $\G$ is almost $2$-$Y$-homogeneous if and only if $\Delta_i=0$ $(2\le i\le D-2)$. We start with Lemma~\ref{Ei}, which we use in the proof of Theorem~\ref{Eg}.

\begin{lemma}
\label{Ei}
With reference to Notation~\ref{GN}, let $\G$ denote a $(Y,Y')$-distance-biregular graph with $k'\ge 3$ and $D\ge 3$. For any $i \; (2\leq i \leq \min\left\lbrace D-1, D'-1 \right\rbrace)$, the following holds.
\begin{enumerate}[label=(\roman*),font=\rm]
\item Pick $x\in Y$, $y\in\G_2(x)$ and $w\in\G_{1,1}(x,y)$. Then
$$
|\G_{i,i,i-1}(x,y,w)|=
\left\{\begin{array}{cl}
\frac{c_ik_i(b_{i-1}-1)}{b_0b_1} & \mbox{if } $i$ \mbox{ is even,}\\
\frac{c'_ik_i(b'_{i-1}-1)}{b_0b_1}  & \mbox{if } i \mbox{ is odd.}\\
\end{array}\right.
$$
\item Pick $x\in Y$, $y\in\G_2(x)$ and $w\in\G_{1,1}(x,y)$. Then
$$
|\G_{i,i,i+1}(x,y,w)|=
\left\{\begin{array}{cl}
\frac{b_ik_i(c_{i+1}-1)}{b_0b_1} & \mbox{if } i \mbox{ is even,}\\
\frac{b'_ik_i(c'_{i+1}-1)}{b_0b_1}  & \mbox{if } i \mbox{ is odd.}
\end{array}\right.
$$
\item Pick $x\in Y$, $y\in\G_2(x)$ and $u,v\in\G_{1,1}(x,y)$. If $\partial(u,v)=2$ then
$$
|\G_{i-1,i+1}(u,v)|=
|\G_{i,i,i-1,i+1}(x,y,u,v)|=
\left\{\begin{array}{cl}
\frac{k_ib_ic_i}{b_0b'_1} & \mbox{if } i \mbox{ is even, }\\
\frac{k_ib'_ic'_{i}}{b_0b'_1}  & \mbox{if } i \mbox{ is odd. }
\end{array}\right. 
$$
\end{enumerate}
\end{lemma}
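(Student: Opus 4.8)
The plan is to prove all three identities by counting a suitable set of ordered pairs in two different ways, using only the distance-regularity of $\G$ around individual vertices together with the rank-$2$ diagram of Figure~\ref{02}; no homogeneity assumption is needed. Throughout I would treat the case $i$ even in detail, the case $i$ odd being identical after replacing each unprimed intersection number by its primed analogue, since for odd $i$ the outer vertex lies in $Y'$ rather than in $Y$.

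For (i), fix $x\in Y$ and $w\in\G_1(x)$, and put $T=\G_i(x)\cap\G_{i-1}(w)$. First I evaluate $|T|$: counting the pairs $(w',z)$ with $w'\in\G_1(x)$, $z\in\G_i(x)$ and $\partial(w',z)=i-1$ over $z$ (each $z\in\G_i(x)$ is at distance $i-1$ from exactly $c_i$ neighbours of $x$, by distance-regularity around $z\in Y$) and over $w'$ (giving $b_0|T|$) yields $|T|=k_ic_i/b_0$. Then I double count the pairs $(y,z)$ with $y\in\G_{2,1}(x,w)$, $z\in T$ and $\partial(y,z)=i$. Counting over $z\in T$: every neighbour of $w$ other than $x$ lies in $\G_2(x)$, and since $w\in\G_{i-1}(z)$ exactly $c_{i-1}$ of the $k'$ neighbours of $w$ are at distance $i-2$ from $z$, while $x$ itself lies at distance $i$; hence $(k'-c_{i-1})-1=b_{i-1}-1$ of them belong to $\G_{2,1}(x,w)\cap\G_i(z)$, and this multiplicity is the same for every $z\in T$, giving $|T|(b_{i-1}-1)$. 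Counting instead over the $b_1$ vertices $y\in\G_{2,1}(x,w)$ gives $\sum_y|\G_{i,i,i-1}(x,y,w)|$. Equating and dividing by $b_1$ produces the claimed value $c_ik_i(b_{i-1}-1)/(b_0b_1)$.

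Part (ii) is the mirror image: with the same fixed $x,w$ one counts the pairs $(y,z)$ with $z\in\G_i(x)\cap\G_{i+1}(w)$ and $\partial(y,z)=i$; the auxiliary count now gives $|\G_i(x)\cap\G_{i+1}(w)|=k_ib_i/b_0$, while the inner multiplicity becomes $c_{i+1}-1$ (the neighbours of $w$ pointing toward $z$, with $x$ removed), yielding $b_ik_i(c_{i+1}-1)/(b_0b_1)$. For part (iii) the first equality is purely metric: if $z\in\G_{i-1}(u)\cap\G_{i+1}(v)$ then, since $u,v\in\G_1(x)\cap\G_1(y)$, the triangle inequality together with bipartite parity forces $\partial(x,z)=\partial(y,z)=i$, so $\G_{i-1,i+1}(u,v)=\G_{i,i,i-1,i+1}(x,y,u,v)$; in particular this is a genuine two-point count for the pair $(u,v)$ with $\partial(u,v)=2$. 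Its value follows from the same type of double count applied to the pair $(u,v)\subseteq Y'$ and its common neighbours $x,y$, now with the $Y'$-parameters, which is precisely why $b_1'$ replaces $b_1$ in the denominator.

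The main obstacle is not the arithmetic but the passage from the averaged identity to the pointwise statement in (i) and (ii): the double count only establishes that $\sum_{y\in\G_{2,1}(x,w)}|\G_{i,i,i-1}(x,y,w)|$ equals $b_1$ times the asserted value, i.e. that this value is the mean over $y$, whereas the lemma asserts it for each individual triple. Showing that this three-point count is independent of the choice of $(x,y,w)$ is the delicate point; it relies on the fact that in a distance-biregular graph the distribution of a vertex across the distance-spheres of a fixed pair depends only on the mutual distance of that pair (the association-scheme structure recorded in \cite{DC}), applied to the outer vertex against the relevant pair inside $\{x,y,w\}$. A secondary technical nuisance, to be tracked carefully, is the systematic bookkeeping of when a $c_j$ or $b_j$ must be replaced by $c_j'$ or $b_j'$: this is governed entirely by the parity of $i$, and hence by the colours of the vertices $z$, $u$, $v$ that occur in each count.
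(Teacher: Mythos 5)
There is a genuine gap, and it sits exactly where you flagged it: the passage from the averaged identity to the pointwise one in parts (i) and (ii). Your double count over $(y,z)$ with $y\in\G_{2,1}(x,w)$, $z\in T$ only yields
$$
\sum_{y\in\G_{2,1}(x,w)}|\G_{i,i,i-1}(x,y,w)|=\frac{k_ic_i(b_{i-1}-1)}{b_0},
$$
i.e.\ that the asserted value is the mean over the $b_1$ choices of $y$. The fix you propose --- that ``the distribution of a vertex across the distance-spheres of a fixed pair depends only on the mutual distance of that pair,'' applied to $z$ against a pair inside $\{x,y,w\}$ --- does not close this gap: that fact only makes \emph{two-point} counts $|\G_{i,j}(u,v)|$ well defined, whereas $|\G_{i,i,i-1}(x,y,w)|$ is a \emph{triple} intersection number, and triple intersection numbers are not determined by pairwise regularity in a distance-biregular graph. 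Indeed, if they were, then the quantity $\gamma_i=|\G_{1,1,i-1}(x,y,z)|$ would automatically be independent of $(x,y,z)$ in every such graph, every distance-biregular graph would be $2$-$Y$-homogeneous, and the entire classification problem of this paper would be vacuous. So the cited principle cannot distinguish the triple counts that are forced (the ones in this lemma) from those that are not (such as $\gamma_i$).

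What actually forces the count here is a decomposition argument, which is the paper's route and which you in fact already use in part (iii) but fail to transfer to (i) and (ii). Fix $z\in\G_{i,i-1}(x,w)$; bipartite parity gives $\partial(y,z)\in\{i-2,i\}$, so $\G_{i,i-1}(x,w)$ is the disjoint union of $\G_{i,i-2,i-1}(x,y,w)$ and $\G_{i,i,i-1}(x,y,w)$. Moreover, if $\partial(x,z)=i$ and $\partial(y,z)=i-2$, the triangle inequality through $x$ and through $y$ pins $\partial(w,z)=i-1$, so $\G_{i,i-2,i-1}(x,y,w)=\G_{i,i-2}(x,y)$. Hence
$$
|\G_{i,i,i-1}(x,y,w)|=|\G_{i,i-1}(x,w)|-|\G_{i,i-2}(x,y)|,
$$
a difference of two two-point counts, both given pointwise by \eqref{r2} and \eqref{am}; the stated formulas then follow after the parity conversions \eqref{Dk} and \eqref{Du}. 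Part (ii) is identical with $\G_{i,i+2,i+1}(x,y,w)=\G_{i,i+2}(x,y)$, using \eqref{r4} and \eqref{ap}. Your part (iii) is essentially sound (the forced-distance identification is correct, and $|\G_{i-1,i+1}(u,v)|$ is a genuine two-point count for the $Y'$ class, computable from the primed analogue of \eqref{ap}), but as written your proofs of (i) and (ii) establish only an average, and the argument offered to upgrade it is not valid.
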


\begin{proof}
(i) We have $|\G_{i,i-1}(x,w)|=|\G_{i,i-2,i-1}(x,y,w)| + |\G_{i,i,i-1}(x,y,w)|
=|\G_{i,i-2}(x,y)|+|\G_{i,i,i-1}(x,y,w)|$, that is
\begin{equation}
\label{Fa}
|\G_{i,i,i-1}(x,y,w)|=|\G_{i,i-1}(x,w)|-|\G_{i,i-2}(x,y)|.
\end{equation}
By \eqref{r2}, $|\G_{i,i-1}(x,w)|=\frac{b_1b_2\cdots b_{i-1}}{c_1'c_2'\cdots c_{i-1}'}$, and by \eqref{am}, $|\G_{i,i-2}(x,y)|=\frac{b_2b_3\cdots b_{i-1}}{c_1c_2\cdots c_{i-2}}$. Now we consider two cases.

{\sc Case 1.} Assume that $i$ is even. Because $i-1$ is odd, by \eqref{Dk}, \eqref{r2}, \eqref{am} and \eqref{Fa} we have 
$$
|\G_{i,i,i-1}(x,y,w)|=\frac{b_2\cdots b_{i-1}}{c_1c_2\cdots c_{i-2}}
\cdot\frac{b_1-c_{i-1}}{c_{i-1}}=
\frac{k_ic_i}{b_0b_1}(b_1-c_{i-1}).
$$
Since $k'=b_{i-1}+c_{i-1}=b_1+c_1$ we have $b_1-c_{i-1}=b_{i-1}-1$, and the result follows.

{\sc Case 2.} Assume that $i$ is odd. By \eqref{Dk} and \eqref{r2},
$$
|\G_{i,i,i-1}(x,y,w)|=\frac{b_2\cdots b_{i-1}}{c_1c_2\cdots c_{i-2}}
\cdot\frac{b_1-c'_{i-1}}{c'_{i-1}}.
$$
Since $k'=b'_{i-1}+c'_{i-1}=b_1+c_1$ we have $b_1-c'_{i-1}=b'_{i-1}-1$, and the result follows.

\medskip
(ii) Recall that $|\G_i(x)|=k_i$ and pick $z\in \G_{i,i+1}(x,w)$. By the triangle inequality of distances in triangles $zxy$ and $zwy$, we have $\partial(y,z) \in \left\lbrace i, i+2 \right\rbrace$. Even more, $\G_{i,i+2, i+1}(x,y,w)=\G_{i,i+2}(x,y)$. This yields \begin{eqnarray}
\label{p33}
|\G_{i,i, i+1}(x,y,w) |=|\G_{i, i+1}(x,w) |-|\G_{i, i+2}(x,y) |.
\end{eqnarray}
Assume $D'\geq D$. Then, it is easy to see that the result holds for $i=D-1$. So, suppose that $2\leq i\leq D-2$. By \eqref{Ej}, \eqref{r4} and \eqref{am}, we have 
\begin{equation}
\label{p3}
|\G_{i,i+1}(x,w)|=\frac{b'_1b'_2\cdots b'_{i-1}}{b_1b_2\cdots b_{i-1}}\cdot{\frac{b'_ik_i}{b_0}}\qquad\mbox{and}\qquad
|\G_{i,i+2}(x,y)|= \frac{b_{i+1}b_ik_i}{b_0b_1}. 
\end{equation}
We next consider two cases. Assume first that $i$ is even. Then, $b_1-b_{i+1}=c_{i+1}-1$. Note that $b_{i}\neq0$ and by \eqref{Du}, since $i+1$ is odd, $b_1b_2\cdots b_{i}=b'_1b'_2\cdots b'_{i}$. Suppose now that $i$ is odd. Then $b_1-b'_{i+1}=c'_{i+1}-1$. We also have the products $b_1b_2\cdots b_{i-1}=b'_1b'_2\cdots b'_{i-1}$ and $b_{i+1}b_i=b'_{i+1}b_i'$. The result follows immediately from \eqref{p33}, \eqref{p3} and the above comments. 

\medskip
(iii) Note that $u,v \in Y'$ and $\partial(u,v)=2$. Similarly as in \eqref{ap}, it is routine to get $|\G_{i-1,i+1}(u,v)|=\frac{b_2'b_3'\cdots b_i'}{c_1'c_2'\cdots c'_{i-1}}=\frac{1}{b_1'}\frac{b_1'b_2'b_3'\cdots b_i'}{c_1'c_2'\cdots c'_{i-1}}$. Splitting into two cases, depending on the parity of $i$, the result now immediately follows from \eqref{Ej}, \eqref{Dk} and \eqref{Du}.
\end{proof}

\begin{theorem}
\label{Eg}
With reference to Notation~\ref{GN}, let $\G$ denote a $(Y,Y')$-distance-biregular graph with $k'\ge 3$ and $D\ge 3$. For any $i \; (2\leq i \leq \min\left\lbrace D-1, D'-1 \right\rbrace)$, the following are equivalent.
\begin{enumerate}[label=(\roman*),font=\rm]
\item The scalar $\Delta_i=0$.
\item There exist $x \in Y$ and $y\in\G_2(x)$ such that for all $z\in\G_{i,i}(x,y)$ the number $|\G_{1,1,i-1}(x,y,z)|$ is independent of the choice of $z$.
\end{enumerate}
\end{theorem}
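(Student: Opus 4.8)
The plan is to run the same Cauchy--Schwarz-type argument as in Lemma~\ref{Da}, but with the roles of the vertices reversed: instead of fixing $x$ and $z$ and summing over $y\in\G_{2,i}(x,z)$, I fix $x\in Y$ and $y\in\G_2(x)$ and sum over $z\in\G_{i,i}(x,y)$. For each such $z$ I set $s_z=|\G_{1,1,i-1}(x,y,z)|$ and $t_z=|\G_{1,1,i+1}(x,y,z)|$. Since every $w\in\G_{1,1}(x,y)$ lies in $\G_1(x)$ while $z\in\G_i(x)$, bipartiteness forces $\partial(w,z)\in\{i-1,i+1\}$, so $s_z+t_z=|\G_{1,1}(x,y)|=c_2$ for every $z$. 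The index set $\G_{i,i}(x,y)$ has $p^2_{i,i}$ elements, which is nonzero (arguing exactly as in Lemma~\ref{Da}, using \eqref{Dj} and Theorem~\ref{Db}). Lemma~\ref{4t} then yields
\[
\Big(\sum_z s_z\Big)\Big(\sum_z t_z\Big)\ge p^2_{i,i}\sum_z s_z t_z,
\]
with equality if and only if all the $s_z$ are equal, which is precisely condition (ii).

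Next I would evaluate the three sums by double counting, using Lemma~\ref{Ei}. The sum $\sum_z s_z$ counts pairs $(z,w)$ with $z\in\G_{i,i}(x,y)$, $w\in\G_{1,1}(x,y)$ and $\partial(w,z)=i-1$; fixing $w$ first turns this into $c_2\,|\G_{i,i,i-1}(x,y,w)|$, evaluated by Lemma~\ref{Ei}(i). Likewise $\sum_z t_z=c_2\,|\G_{i,i,i+1}(x,y,w)|$ by Lemma~\ref{Ei}(ii). For the cross term, $\sum_z s_z t_z$ counts triples $(z,u,v)$ with $u$ at distance $i-1$ and $v$ at distance $i+1$ from $z$; since necessarily $u\ne v$ and both are neighbours of $x$, they satisfy $\partial(u,v)=2$, so fixing the ordered pair $(u,v)$ (there are $c_2(c_2-1)$ of them) gives $c_2(c_2-1)\,|\G_{i,i,i-1,i+1}(x,y,u,v)|$, evaluated by Lemma~\ref{Ei}(iii).

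It then remains to feed these closed forms into the displayed inequality and simplify. Substituting $p^2_{i,i}=k_ip^i_{2,i}c_2/(b_0b_1)$ (from \eqref{Di} together with $k_2=b_0b_1/c_2$), and then using \eqref{r5} in the form $(c_2-1)/b'_1=(c'_2-1)/b_1$ to reconcile the two distinct valencies, the common positive factors cancel and the inequality collapses, for $i$ even, to $(b_{i-1}-1)(c_{i+1}-1)\ge p^i_{2,i}(c'_2-1)$ and, for $i$ odd, to $(b'_{i-1}-1)(c'_{i+1}-1)\ge p^i_{2,i}(c'_2-1)$ --- that is, exactly to $\Delta_i\ge 0$. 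This bookkeeping, keeping track of which quantities are primed and checking that everything cancels cleanly in both parities, is the main (purely computational) obstacle; the conceptual content is already contained in the setup.

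Finally, since the evaluated sums depend only on the intersection array and not on the particular $x,y$, the reduced inequality is always $\Delta_i\ge 0$, and for any fixed $x,y$ it is an equality exactly when all the $s_z$ coincide. Thus $\Delta_i=0$ forces equality for every choice of $x,y$, giving (ii); conversely, if (ii) holds for some $x,y$ then equality holds there, whence $\Delta_i=0$. This establishes (i)$\Lra$(ii).
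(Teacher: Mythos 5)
Your proposal is correct and follows essentially the same route as the paper's own proof: the paper also fixes $x\in Y$, $y\in\G_2(x)$, applies Lemma~\ref{4t} to $s_z=|\G_{1,1,i-1}(x,y,z)|$ and $t_z=|\G_{1,1,i+1}(x,y,z)|$, evaluates the three sums by double counting via Lemma~\ref{Ei} (equations \eqref{Ed}--\eqref{Ef}), and simplifies using \eqref{Di} and \eqref{r5} exactly as you describe. The only difference, a welcome one, is that you retain the inequality form of Lemma~\ref{4t} and note that the evaluated sums depend only on the intersection array, which yields both implications in one stroke, whereas the paper's written proof establishes only (ii)$\Ra$(i) and gets (i)$\Ra$(ii) from Proposition~\ref{Dm}.
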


\begin{proof}
Pick $x \in Y$ and $y \in \G_2(x)$. Recall that, since $k'\ge 3$, by Theorem~\ref{Db}, $p^2_{i,i}$ is nonzero. Assume that for all $z\in\G_{i,i}(x,y)$ the number $|\G_{1,1,i-1}(x,y,z)|$ does not depend on the choice of $z$. With it the numbers $|\G_{1,1,i-1}(x,y,z)|$ are all equal to their average value. Since $|\G_{1,1,i-1}(x,y,z)|+|\G_{1,1,i+1}(x,y,z)|=c_2$, Lemma \ref{4t} yields
\begin{equation}
\label{Ec}
\begin{split}
\left(\sum_{z \in \G_{i,i}(x,y)}|\G_{1,1,i-1}(x,y,z)|\right)
\left( \sum_{z \in \G_{i,i}(x,y)} |\G_{1,1,i+1}(x,y,z)|\right)=
\qquad\qquad\qquad\qquad\\
\qquad\qquad\qquad\qquad=|\G_{i,i}(x,y)| \left( \sum_{z \in \G_{i,i}(x,y)} |\G_{1,1,i-1}(x,y,z)|\cdot|\G_{1,1,i+1}(x,y,z)|\right).
\end{split}
\end{equation}
By \eqref{Du} and \eqref{Di}, \eqref{Ec} becomes
\begin{equation}
\label{Eh}
\begin{split}
\frac{b_0b_1}{c_2k_i}\left(\sum_{z \in \G_{i,i}(x,y)}|\G_{1,1,i-1}(x,y,z)|\right)
\frac{b_0b_1}{c_2k_i}\left( \sum_{z \in \G_{i,i}(x,y)} |\G_{1,1,i+1}(x,y,z)|\right)=
\qquad\qquad\qquad\qquad\\
\qquad\qquad\qquad\qquad=p^i_{2,i}\frac{(c_2'-1)b_0b'_1}{c_2(c_2-1)k_i}
\left( \sum_{z \in \G_{i,i}(x,y)} |\G_{1,1,i-1}(x,y,z)|\cdot|\G_{1,1,i+1}(x,y,z)|\right)
\end{split}
\end{equation}
(note that we used $b_1=\frac{b_1'(c_2'-1)}{c_2-1}$).

As in the proof of Lemma~\ref{Da}, for a fixed $x\in Y$ and $y\in\G_2(x)$, counting the number of ordered pairs $(z,w)$ (where $z\in\G_{i,i}(x,y)$ and $w\in\G_{1,1,i-1}(x,y,z)$) in two different ways, using Lemma~\ref{Ei}(i) it follows
\begin{equation}
\label{Ed}
\sum_{z\in\G_{i,i}(x,y)} |\G_{1,1,i-1}(x,y,z)|=
\left\{\begin{array}{rl}
\frac{c_2k_ic_i(b_{i-1}-1)}{b_0b_1} & \mbox{if } i \mbox{ is even,}\\
\frac{c_2k_ic'_i(b'_{i-1}-1)}{b_0b_1}  & \mbox{if } i \mbox{ is odd.}
\end{array}\right.
\end{equation}
Similarly, using Lemma~\ref{Ei}(ii), it is routine to compute
\begin{equation}
\label{Ee}
\sum_{z\in\G_{i,i}(x,y)}|\G_{1,1,i+1}(x,y,z)|=  
\left\{\begin{array}{cl}
\frac{c_2b_ik_i(c_{i+1}-1)}{b_0b_1} & \mbox{if } $i$ \mbox{ is even,}\\
\frac{c_2b'_ik_i(c'_{i+1}-1)}{b_0b_1}  & \mbox{if } i \mbox{ is odd,}
\end{array}
\right. 
\end{equation}
and for a fixed $x\in Y$ and $y\in\G_2(x)$, counting the numbers of triples $(u,v,z)$ (where $u,v,\in\G_{1,1}(x,y)$, $\partial(u,v)=2$, $z\in\G_{i,i,i-1,i+1}(x,y,u,v)$) in two different ways, we get
\begin{equation}
\label{Ef}
\sum_{z\in\G_{i,i}(x,y)}|\G_{1,1,i-1}(x,y,z)|\cdot{|\G_{1,1,i+1}(x,y,z)|}=
\left\{\begin{array}{cl}
\frac{c_2(c_2-1)k_ib_ic_i}{b_0b'_1} & \mbox{if } i \mbox{ is even, }\\
\frac{c_2(c_2-1)k_ib'_ic'_{i}}{b_0b'_1}  & \mbox{if } i \mbox{ is odd. }
\end{array}\right. 
\end{equation}
Since the integers $c_i$, $b_i$, $c'_i$ and $b'_i$ are positive, the equality $\Delta_i=0$ follows from \eqref{Eh}--\eqref{Ef} and the definition of $\Delta_i$.
\end{proof}

\begin{corollary}
\label{Ek}
With reference to Notation~\ref{GN}, let $\G$ denote a $(Y,Y')$-distance-biregular graph with $k'\ge 3$ and $D\ge 3$. Then the following are equivalent.
\begin{enumerate}[label=(\roman*),font=\rm]
\item $\Delta_i=0$ $(2\le i\le \min\{D-1,D'-1\})$.
\item For every $i$ $(2\le i\le D-1)$, there exist $x \in Y$ and $y\in\G_2(x)$ such that for all $z\in\G_{i,i}(x,y)$ the number $|\G_{1,1,i-1}(x,y,z)|$ is independent of the choice of $z$.
\item $\G$ is $2$-$Y$-homogeneous.
\end{enumerate}
\end{corollary}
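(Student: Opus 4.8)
The plan is to establish the cyclic chain of implications (iii)$\Ra$(ii)$\Ra$(i)$\Ra$(iii), leaning on Theorem~\ref{Eg} and Proposition~\ref{Dm}, and to dispose of the index $i=D-1$ in the boundary case $D'=D-1$ by a separate elementary distance argument. Throughout I would first record that, since $k'\ge 3$, by \eqref{Dj}, Theorem~\ref{Db} and Lemma~\ref{Dc} the sets $\G_{i,i}(x,y)$ are nonempty for all $i$ with $2\le i\le D-1$ and for every $x\in Y$, $y\in\G_2(x)$; hence every statement below is non-vacuous. The implication (iii)$\Ra$(ii) is immediate: if $\G$ is $2$-$Y$-homogeneous then by definition, for each $i$ $(1\le i\le D-1)$ and for all $x\in Y$, $y\in\G_2(x)$, $z\in\G_{i,i}(x,y)$, the number $|\G_{1,1,i-1}(x,y,z)|$ is independent of the choice of $x$, $y$, $z$; fixing any such $x$ and $y$ gives independence of $z$, which is exactly (ii).

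For (ii)$\Ra$(i) I would fix $i$ with $2\le i\le\min\{D-1,D'-1\}$. Statement (ii) at this $i$ supplies $x\in Y$ and $y\in\G_2(x)$ for which $|\G_{1,1,i-1}(x,y,z)|$ is independent of $z\in\G_{i,i}(x,y)$, which is precisely condition (ii) of Theorem~\ref{Eg}; that theorem then yields $\Delta_i=0$. Since $i$ is arbitrary in the range, (i) follows.

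For the main direction (i)$\Ra$(iii), assume $\Delta_i=0$ for all $2\le i\le\min\{D-1,D'-1\}$. By Proposition~\ref{Dm} (implication (i)$\Ra$(ii)), for each such $i$ the number $|\G_{1,1,i-1}(x,y,z)|$ is independent of $z$ and equals the scalar $\gamma_i$ given there by a formula in the intersection numbers; as that formula involves neither $x$, $y$, nor $z$, the number is in fact independent of all three. Together with the trivial level $i=1$ (where $\gamma_1=1$), this establishes the $2$-$Y$-homogeneity condition at every $i$ with $1\le i\le\min\{D-1,D'-1\}$. When $D'\ge D$ we have $\min\{D-1,D'-1\}=D-1$ and the proof is complete.

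The remaining (and, I expect, only genuine) obstacle is the boundary case $D'=D-1$: here $\min\{D-1,D'-1\}=D-2$, the scalar $\Delta_{D-1}$ is not even defined, yet $2$-$Y$-homogeneity still demands control at level $i=D-1$. My plan is to argue as follows. First, $D'=D-1$ forces $D$ to be even, since if $D$ were odd then $\G_D(x)\subseteq Y'$ for $x\in Y$ would give $D'\ge D$. Thus $D-1$ is odd, so any $z\in\G_{D-1,D-1}(x,y)$ lies in $Y'$ and, having eccentricity $D'=D-1$, satisfies $\partial(x,z)=\partial(y,z)=D-1$. Now for any common neighbour $w\in\G_{1,1}(x,y)$ we have $w\in Y'$, so $\partial(w,z)$ is even; the triangle inequality with $\partial(x,w)=1$ gives $\partial(w,z)\ge D-2$, the eccentricity bound gives $\partial(w,z)\le D-1$, and parity then forces $\partial(w,z)=D-2$. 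Hence every common neighbour of $x$ and $y$ automatically lies at distance $D-2$ from $z$, so $\gamma_{D-1}=|\G_{1,1,D-2}(x,y,z)|=|\G_{1,1}(x,y)|=c_2$ for every choice of $x$, $y$, $z$. This is exactly the required independence at level $i=D-1$, which completes the verification that $\G$ is $2$-$Y$-homogeneous.
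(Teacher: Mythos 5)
Your proposal is correct and follows essentially the same route as the paper: both rest on Proposition~\ref{Dm} and Theorem~\ref{Eg} for all levels $i\le\min\{D-1,D'-1\}$, and both dispose of the boundary case $D'=D-1$ (where $D$ is even and $\Delta_{D-1}$ is undefined) by the same parity-plus-eccentricity argument showing $|\G_{1,1,D-2}(x,y,z)|=c_2$ for every $z\in\G_{D-1,D-1}(x,y)$. In fact your version of that step is cleaner: the paper's proof writes $w\in\G_{2,2}(x,y)$ where the argument plainly requires $w\in\G_{1,1}(x,y)$ (otherwise the stated distances $\{D-2,D\}$ have the wrong parity), and your proof uses the correct set.
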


\begin{proof}
Recall that $D' \in \{D-1, D, D+1\}$. 

\medskip
{\sc Case 1.} Assume that $D-1\ne D'$. Then $\min\left\lbrace D-1, D'-1 \right\rbrace=D-1$. The result now follows immediately from Proposition~\ref{Dm} and Theorem~\ref{Eg}.

\medskip
{\sc Case 2.} Assume that $D-1=D'$. This implies that $D$ is even and $\min\left\lbrace D-1, D'-1 \right\rbrace=D-2$. We prove that (i)$\Ra$(ii)$\Ra$(iii)$\Ra$(i). 

(i)$\Ra$(ii) Assume that $\Delta_i=0$ for $2\le i\le D-2$. By Proposition~\ref{Dm} and Theorem~\ref{Eg}, for all $x\in Y$ and $y\in\G_2(x)$ the number $\G_{1,1,i-1}(x,y,z)$ is independent of the choice of $z\in\G_{i,i}(x,y)$ $(2\le i\le D-2)$. Fix $x\in Y$ and $y\in\G_2(x)$. We need to show that $\G_{1,1,D-2}(x,y,z)$ is independent of $z\in\G_{D-1,D-1}(x,y)$. We show that $|\G_{1,1,D-2}(x,y,z)|=c_2$. Fix $z\in\G_{D-1,D-1}(x,y)$. For any $w\in\G_{2,2}(x,y)$ we have $\partial(z,w)\in\{D-2,D\}$. Note that, since $D$ is even,  $D-1$ is odd which yields that $z\in Y'$. Now, because $D'<D$ we have $\partial(z,w)=D-2$. The result follows.

(ii)$\Ra$(iii)$\Ra$(i) Follows immediately from Proposition~\ref{Dm} and Theorem~\ref{Eg}.
\end{proof}

\begin{corollary}
\label{El}
With reference to Notation~\ref{GN}, let $\G$ denote a $(Y,Y')$-distance-biregular graph with $k'\ge 3$ and $D\ge 3$. The following are equivalent.
\begin{enumerate}[label=(\roman*),font=\rm]
\item $\Delta_i=0$ $(2\le i\le D-2)$.
\item For every $i$ $(2\le i\le D-2)$, there exist $x \in Y$ and $y\in\G_2(x)$ such that for all $z\in\G_{i,i}(x,y)$ the number $|\G_{1,1,i-1}(x,y,z)|$ is independent of the choice of $z$.
\item $\G$ is almost $2$-$Y$-homogeneous.
\end{enumerate}
\end{corollary}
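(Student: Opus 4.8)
The plan is to establish the cycle of implications (i) $\Ra$ (iii) $\Ra$ (ii) $\Ra$ (i), relying entirely on Theorem~\ref{Eg} and Proposition~\ref{Dm}, both of which have already been proved (under the standing hypotheses $k'\ge 3$, $D\ge 3$) for every index $i$ in the range $2\le i\le\min\{D-1,D'-1\}$. The first thing I would record is a purely numerical observation: the range $2\le i\le D-2$ occurring in this corollary is always contained in $2\le i\le\min\{D-1,D'-1\}$. Indeed, since $D'\in\{D-1,D,D+1\}$ one has $\min\{D-1,D'-1\}\ge D-2$ in every case, with equality precisely when $D'=D-1$. Hence, in contrast with Corollary~\ref{Ek}, no boundary index $i=D-1$ is ever reached, so both cited results apply verbatim to each $i$ in our range. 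When $D=3$ the range is empty and all three statements hold vacuously, so I may assume $D\ge 4$.

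For (i) $\Ra$ (iii): assuming $\Delta_i=0$ for all $i$ with $2\le i\le D-2$, I apply the implication (i) $\Ra$ (ii) of Proposition~\ref{Dm} to each such $i$. This yields, for every $x\in Y$, $y\in\G_2(x)$ and $z\in\G_{i,i}(x,y)$, that $|\G_{1,1,i-1}(x,y,z)|$ equals the explicit scalar $\gamma_i=c_i(b_{i-1}-1)/p^i_{2,i}$ (or its odd-$i$ analogue $c'_i(b'_{i-1}-1)/p^i_{2,i}$), a quantity determined by the intersection numbers alone and hence depending only on $i$. Since $\gamma_1=1$ disposes of the trivial index $i=1$, the number $|\G_{1,1,i-1}(x,y,z)|$ is independent of $x$, $y$ and $z$ for all $1\le i\le D-2$, which is exactly the defining condition for $\G$ to be almost $2$-$Y$-homogeneous.

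The implication (iii) $\Ra$ (ii) is then immediate: if $\G$ is almost $2$-$Y$-homogeneous, then for each $i$ in the range the number $|\G_{1,1,i-1}(x,y,z)|$ is independent of all of $x,y,z$, so in particular, fixing any single pair $x\in Y$, $y\in\G_2(x)$, it is independent of $z$, giving (ii). Finally, for (ii) $\Ra$ (i) I invoke the implication (ii) $\Ra$ (i) of Theorem~\ref{Eg}, which asserts precisely that the existence of one pair $x,y$ for which $|\G_{1,1,i-1}(x,y,z)|$ is independent of $z$ forces $\Delta_i=0$; applying it to each $i$ with $2\le i\le D-2$ closes the cycle.

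I do not anticipate a genuine obstacle: all the combinatorial work has been front-loaded into Theorem~\ref{Eg} and Proposition~\ref{Dm}, so the corollary is essentially a bookkeeping assembly of those two equivalences. The only point demanding care is the elementary range check of the first paragraph, which is exactly what lets us bypass the extra case analysis that the $2$-$Y$-homogeneous version (Corollary~\ref{Ek}) required when $D'=D-1$, since there the homogeneity condition extended to the index $i=D-1$ lying outside the domain of $\Delta_i$.
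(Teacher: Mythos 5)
Your proposal is correct and follows essentially the same route as the paper, whose entire proof is the observation that $D-2\le\min\{D-1,D'-1\}$ together with an appeal to Proposition~\ref{Dm} and Theorem~\ref{Eg}. Your write-up merely makes explicit the cycle of implications and the trivial index $i=1$ (where $\gamma_1=1$), which the paper leaves implicit.
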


\begin{proof}
Since $D-2 \leq \min\left\lbrace D-1, D'-1 \right\rbrace$, the result follows immediately from Proposition~\ref{Dm} and Theorem~\ref{Eg}.
\end{proof}



\section{Distance-biregular graph with $\boldsymbol{c_2'=1}$ and $\boldsymbol{k'\ge 3}$}
\label{Eb}

Let $\G$ denote a $(Y,Y')$-distance-biregular graph with $D\ge 3$ and $k'\ge 3$. 
In this section we show that $\G$ is almost $2$-$Y$-homogeneous with $c_2'=1$ if and only if $c_i=1$ for every integer $i$ $(1\le i\le D-1)$.

\begin{theorem}
\label{Ep}
With reference to Notation~\ref{GN}, let $\G$ denote a $(Y,Y')$-distance-biregular graph with $D\ge 3$ and $k'\ge 3$. The following are equivalent.
\begin{enumerate}[label=(\roman*), font=\rm]
\item $\G$ is almost $2$-$Y$-homogeneous and $c_2=1$. 
\item $c_i=1$ for every integer $i$ $(1\le i\le D-1)$.
\end{enumerate}
\end{theorem}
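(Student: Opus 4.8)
The plan is to deduce both implications from the characterization in Corollary~\ref{El}: the graph $\G$ is almost $2$-$Y$-homogeneous precisely when $\Delta_i=0$ for every $i$ with $2\le i\le D-2$. The first thing I would record is that the presence of $c_2=1$ (it is part of (i), and it is the case $i=2$ of (ii)) forces $c_2'=1$. Indeed, \eqref{r5} gives $b_1'(c_2'-1)=b_1(c_2-1)=0$, and $b_1'=k-1>0$ since $k\ge 2$ (as $D\ge 3$); hence $c_2'=1$. The payoff is that the summand $p^i_{2,i}(c_2'-1)$ then vanishes in every $\Delta_i$, so that for $2\le i\le D-2$ the definition collapses to $\Delta_i=(b_{i-1}-1)(c_{i+1}-1)$ when $i$ is even and to $\Delta_i=(b'_{i-1}-1)(c'_{i+1}-1)$ when $i$ is odd. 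This factorization drives the whole argument.

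For (ii)$\Ra$(i) I would argue as follows. Assuming $c_i=1$ for $1\le i\le D-1$, Corollary~\ref{Fb} yields $c_i'=1$ for $1\le i\le \min\{D-1,D'-1\}$. For even $i$ the factor $c_{i+1}-1$ vanishes because $i+1\le D-1$; for odd $i$ the factor $c'_{i+1}-1$ vanishes because $i+1\le\min\{D-1,D'-1\}$ --- here I would check the only borderline case $D'=D-1$, which forces $D$ even, so that an odd $i\le D-2$ satisfies $i\le D-3$ and hence $i+1\le D-2=D'-1$. Thus $\Delta_i=0$ throughout $2\le i\le D-2$, Corollary~\ref{El} gives almost $2$-$Y$-homogeneity, and $c_2=1$ is part of the hypothesis.

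For the substantive direction (i)$\Ra$(ii) I would induct on $i$, starting from $c_1=c_2=1$. Suppose $c_j=1$ for all $j\le i$ with $2\le i\le D-2$; the goal is $c_{i+1}=1$, and this is where the hypothesis $k'\ge 3$ does the work. If $i$ is even then $i-1$ is odd, so $b_{i-1}=k'-c_{i-1}=k'-1\ge 2$ by induction; since $\Delta_i=(b_{i-1}-1)(c_{i+1}-1)=0$ and $b_{i-1}-1\ne 0$, I conclude $c_{i+1}=1$ directly. If $i$ is odd then $i-1$ is even and Corollary~\ref{Fb} gives $c'_{i-1}=1$, so $b'_{i-1}=k'-c'_{i-1}=k'-1\ge 2$; now $\Delta_i=(b'_{i-1}-1)(c'_{i+1}-1)=0$ forces $c'_{i+1}=1$.

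The main obstacle is exactly this odd case: $\Delta_i$ is written in the primed intersection numbers, so it delivers $c'_{i+1}=1$ rather than $c_{i+1}=1$. To close the gap I would invoke Proposition~\ref{Eo}: since $c'_{i+1}=1=c_i$, the equivalence \eqref{Es} gives $c_{i+1}=c'_i$, and $c'_i=1$ again by Corollary~\ref{Fb}, whence $c_{i+1}=1$. Running the induction up to $i=D-2$ then produces $c_i=1$ for all $1\le i\le D-1$. The remaining care is purely bookkeeping: checking parity in the two cases and verifying that every index invoked stays within the range $\min\{D-1,D'-1\}$ demanded by Corollary~\ref{Fb} and Proposition~\ref{Eo}, which holds because $i\le D-2$ and $D'\ge D-1$.
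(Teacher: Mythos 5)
Your proof is correct, and it shares the paper's skeleton: reduce to $c_2'=1$ via \eqref{r5}, use Corollary~\ref{El} to translate almost $2$-$Y$-homogeneity into $\Delta_i=0$ $(2\le i\le D-2)$, exploit the resulting factorization of $\Delta_i$, and run an induction with a parity split, moving between primed and unprimed intersection numbers via Corollary~\ref{Fb} and Proposition~\ref{Eo}. Where you genuinely diverge is inside the key step of (i)$\Ra$(ii). The paper argues by contradiction from a minimal $t$ with $c_{t+1}>1$: it invokes the strict-inequality half \eqref{Et} of Proposition~\ref{Eo} to get $c'_{t+1}>1$ as well, concludes from $\Delta_t=0$ that $b_{t-1}=1$ or $b'_{t-1}=1$, and then needs Lemma~\ref{4w} to propagate this to $b'_t=1$ (resp.\ $b_t=1$), so that $k'=b'_t+c'_t=2$ (resp.\ $k'=b_t+c_t=2$) contradicts $k'\ge 3$. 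You instead show directly that the $b$-factor of $\Delta_i$ is nonzero --- from the induction hypothesis $c_{i-1}=1$ (resp.\ $c'_{i-1}=1$, via Corollary~\ref{Fb}) and the parity relations $c_{i-1}+b_{i-1}=k'$ (resp.\ $c'_{i-1}+b'_{i-1}=k'$) you get $b_{i-1}-1=k'-2\ge 1$ --- so the $c$-factor must vanish; in the odd case you then convert $c'_{i+1}=1$ into $c_{i+1}=1$ using the equality half \eqref{Es} of Proposition~\ref{Eo}. The two arguments are logically dual (the paper rules out a nonzero $c$-factor, you rule out a zero $b$-factor), but yours buys a direct, non-contradiction induction that dispenses with Lemma~\ref{4w} entirely; it is also slightly more careful than the paper in the (ii)$\Ra$(i) direction, where you explicitly check that the index range $\min\{D-1,D'-1\}$ required by Corollary~\ref{Fb} is respected in the borderline case $D'=D-1$, a point the paper passes over silently.
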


\begin{proof}
(i)$\Ra$(ii) By \eqref{r5}, $c_2'=1$. To obtain a contradiction we assume that there exists $t$ $(2\le t\le D-2)$ such that $c_j=1$ for all $1\leq j\leq t$ and $c_{t+1}>1$. With this assumption it follows from Corollary~\ref{Fb} that $c'_1=\cdots=c'_t=1$. Moreover, since $c_{t+1}>c'_t=1$, \eqref{Et} yields $c'_{t+1}>c_t=1$. Since $\G$ is almost $2$-$Y$-homogeneous and $c'_2=1$, by Corollary~\ref{El} $\Delta_t=0$, which implies that we have either $(b_{t-1}-1)(c_{t+1}-1)=0$ or $(b'_{t-1}-1)(c'_{t+1}-1)=0$, depending on the parity of $t$. Since both $c_{t+1}>1$ and $c'_{t+1}>1$ we have either $b_{t-1}=1$ or $b'_{t-1}=1$.

{\sc Case 1.} Suppose $t$ is even, and with it $b_{t-1}=1$. Lemma~\ref{4w} yields $b'_t=1$. Since $c'_t=1$ we get $k'=b'_t+c'_t=2$, a contradiction.

{\sc Case 2.} Suppose $t$ is odd, and with it $b'_{t-1}=1$. Lemma~\ref{4w} yields $b_t=1$. Since $c_t=1$ we get $k'=b_t+c_t=2$, a contradiction.

Since both cases contradict $k'>2$, the result follows.

\medskip
(ii)$\Ra$(i) Assume that $c_i=1$ for every $i$ $(1 \leq i \leq D-1)$. Then, Corollary~\ref{Fb} implies $c'_i=1$ for every $i$ $(1 \leq i \leq D-1)$. Thus $\Delta_i=0$ for every $i$ $(1 \leq i \leq D-2)$. The result now follows from Corollary~\ref{El}.
\end{proof}



\section{Distance-biregular graphs with $\boldsymbol{D=3}$}
\label{Fc}

Note that any $(Y,Y')$-distance-biregular graph with $D=3$ is almost $2$-$Y$-homogeneous, by definition. In this section we show that a $(Y,Y')$-distance-biregular graph with $D=3$ is $2$-$Y$-homogeneous if and only if $\left|\G_2(x)\right|=\deg(x)$. Note that $k'=2$ automatically yields $\left|\G_2(x)\right|=\deg(x)$ (any by Proposition~\ref{km} such graph is $2$-$Y$-homogeneous). In the next theorem (Theorem~\ref{ge}) we consider the case when $k'\ge 3$.

\begin{theorem}
\label{ge}
With reference to Notation~\ref{GN}, let $\G$ denote a $(Y,Y')$-distance-biregular graph $\G$ with $k'\ge 3$ and $D=3$. The following are equivalent.
\begin{enumerate}[label=(\roman*), font=\rm] 
\item $\Delta_2=0$.
\item $\G$ is $2$-$Y$-homogeneous.
\item $|\G_2(x)|=\deg(x)$.
\end{enumerate}
\end{theorem}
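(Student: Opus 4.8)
The plan is to prove (i)$\Lra$(ii) almost for free and to spend the real effort on (i)$\Lra$(iii), which I would obtain by evaluating $\Delta_2$ explicitly as a function of $k$, $k'$ and $c_2$. For the first equivalence, observe that $D=3$ together with $D'\ge 3$ (which $D\ge 3$ already guarantees) forces $\min\{D-1,D'-1\}=2$, so the single scalar $\Delta_2$ constitutes the entire family appearing in Corollary~\ref{Ek}. That corollary then reads precisely as ``$\Delta_2=0$ if and only if $\G$ is $2$-$Y$-homogeneous'', which is exactly (i)$\Lra$(ii). Thus it remains only to tie the quantity $|\G_2(x)|$ to $\Delta_2$.

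For (i)$\Lra$(iii) I would first simplify the three ingredients of $\Delta_2=(b_1-1)(c_3-1)-p^2_{2,2}(c_2'-1)$. Since $D=3$ we have $b_3=0$, so $c_3=k'$ and hence $c_3-1=k'-1=b_1$. Because $\G_4(y)=\emptyset$, the set $\G_2(x)$ is the disjoint union $\{y\}\cup\G_{2,2}(x,y)$, whence $p^2_{2,2}=k_2-1=\frac{kb_1}{c_2}-1$ by \eqref{Ej} (equivalently, put $b_3=0$ in \eqref{Ea}). Finally, \eqref{r5} together with $b_1'=k-1$ (from $c_1'=1$ and $c_1'+b_1'=k$) gives $c_2'-1=\frac{b_1(c_2-1)}{k-1}$. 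Substituting these and clearing denominators, I expect the relevant bracket to factor as
$$
(b_1-1)c_2(k-1)-(kb_1-c_2)(c_2-1)=(k-c_2)(b_1-c_2),
$$
so that $\Delta_2=\dfrac{b_1\,(k-c_2)\,(b_1-c_2)}{c_2(k-1)}$.

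To conclude I would note that $b_1=k'-1\ge 2$, $c_2\ge 1$ and $k-1\ge 1$ are all positive, and that $D=3$ forces $\G_3(x)\ne\emptyset$, hence $b_2=k-c_2\ge 1$, so the factor $k-c_2$ cannot vanish. Therefore $\Delta_2=0$ if and only if $b_1=c_2$; and since $k_2=\frac{kb_1}{c_2}$, the equality $b_1=c_2$ is equivalent to $|\G_2(x)|=k=\deg(x)$. This yields (i)$\Lra$(iii) and, combined with the first paragraph, establishes all three equivalences. The main obstacle is not conceptual but careful bookkeeping: performing the substitution so the bracket collapses to the clean product above, and — most importantly — recognizing and discarding the spurious factor $k-c_2$ (which a naive reading of $\Delta_2=0$ would admit) by invoking $D=3$; without that step one would be left only with the weaker conclusion $c_2\in\{b_1,k\}$.
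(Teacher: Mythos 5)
Your proposal is correct. For (i)$\Leftrightarrow$(ii) you argue exactly as the paper does: $D=3$ is odd, so $D'\ge 3$ and $\min\{D-1,D'-1\}=2$, and Corollary~\ref{Ek} collapses to the single statement ``$\Delta_2=0$ if and only if $\G$ is $2$-$Y$-homogeneous''. Where you genuinely diverge is the link to (iii). The paper proves two directed implications: assuming (ii) it gets $\Delta_2=0$, then --- after a case split $c_2'=1$ versus $c_2'\ge 2$, which it needs because it divides by $c_2'-1$ and $c_2-1$ --- it plays $p^2_{2,2}=\frac{b_1(b_1-1)}{c_2'-1}$ off against $p^2_{2,2}=\frac{b_0b_1-c_2}{c_2}$ to reach $b_2(b_1-c_2)=0$, hence $b_1=c_2$; and for (iii)$\Rightarrow$(i) it makes a separate computation, using $p^2_{2,2}=\deg(x)-1$, to get $\Delta_2=b_1(b_1-c_2)$. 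You instead evaluate $\Delta_2$ once, unconditionally: substituting $c_3-1=b_1$, $p^2_{2,2}=\frac{kb_1}{c_2}-1$ (from \eqref{Ej} and \eqref{id0}, or \eqref{Ea} with $b_3=0$) and $c_2'-1=\frac{b_1(c_2-1)}{k-1}$ (from \eqref{r5}) into the definition of $\Delta_2$; your factorization $(b_1-1)c_2(k-1)-(kb_1-c_2)(c_2-1)=(k-c_2)(b_1-c_2)$ checks out, giving $\Delta_2=\frac{b_1(k-c_2)(b_1-c_2)}{c_2(k-1)}$. Reading off the zero locus and discarding the factor $k-c_2=b_2\neq 0$ (your justification via $\G_3(x)\neq\emptyset$ is the right one; the paper uses the same fact silently when it passes from $b_2(b_1-c_2)=0$ to $b_1=c_2$) yields $\Delta_2=0\Leftrightarrow b_1=c_2\Leftrightarrow|\G_2(x)|=\deg(x)$. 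What your route buys: no case analysis (you multiply through by $c_2(k-1)\neq 0$ rather than dividing by quantities that may vanish), one computation instead of two, and an explicit closed form for $\Delta_2$ that is a reusable quantitative statement in its own right. What the paper's route buys: each directed implication uses only the hypothesis it needs, so for instance (iii)$\Rightarrow$(i) becomes a two-line verification once $p^2_{2,2}=k-1$ is observed.
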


\begin{proof}
First note that, if $D$ is odd then $D'\ge D$. Considering the intersection diagram of rank $0$ (see Figure~\ref{01}) we have $|\G_2(x)|=\frac{b_0b_1}{c_2}$. Now, it is not hard to see that $|\G_2(x)|=\deg(x)$ holds if and only if $b_1=c_2$.

\smallskip
(i)$\Ra$(ii) Since $D=3$ is odd we have $D'\ge D$, and with it $\min\{D-1,D'-1\}=2$. The claim now follows immediately from Corollary~\ref{Ek}.

\smallskip
(ii)$\Ra$(iii) Assume that $\G$ is $2$-$Y$-homogeneous. By Corollary \ref{Ek}, the scalar $\Delta_2=0$, and since $D=3$, we have $c_3-1=b_1$. This yields
\begin{equation}
\label{Dt}
b_1(b_1-1)=p^2_{2,2}(c'_2-1).
\end{equation}
Suppose first that $c'_2=1$. Equation \eqref{r5} yields $c_2=1$. Moreover, the right-hand side of \eqref{Dt} is equal to $0$, and with it $b_1=1$. This shows that $b_1= c_2=1$ and the result follows. Assume now that $c'_2\ge 2$. Note that $k-1=b_1'=b_0-1$. Now from \eqref{r5} and \eqref{Dt} we have 
\begin{equation}
\label{Dv}
p^2_{2,2}=\frac{b_1(b_1-1)}{c'_2-1}=\frac{(b_0-1)(b_1-1)}{c_2-1}.
\end{equation}
On the other hand, since for any $x\in Y$ and $y\in\G_2(x)$, $p^2_{2,2}=|\G_{2,2}(x,y)|$ and $|\G_{2,4}(x,y)|=0$, by \eqref{Ej} and \eqref{id0} this yields
\begin{equation}
\label{Dw}
p^2_{2,2}=|\G_2(x)|-1=\frac{b_0b_1-c_2}{c_2}.
\end{equation}
By \eqref{Dv} and \eqref{Dw} we have $c_2(b_0-1)(b_1-1)=(b_0b_1-c_2)(c_2-1)$. Replacing $b_0$ by $b_2+c_2$ in the last equality, after simplification we get $b_2(b_1-c_2)=0$, which yields $c_2=b_1$. Hence, $|\G_2(x)|=\deg(x)$.

\smallskip
(iii)$\Ra$(i) Assume that $|\G_2(x)|=\deg(x)$. Note that $b_1'=b_0-1$ and so
$$
b_1'=\deg(x)-1=|\G_2(x)|-1=p^2_{2,2}.
$$
Now, by \eqref{r5} we have
\begin{equation}
\label{Dx}
b_1(c_2-1)=p^2_{2,2}(c'_2-1).
\end{equation}
After replacing $c_3$ by $b_1+c_1$ in the definition of $\Delta_2$, and applying \eqref{Dx} we have $\Delta_2=b_1(b_1-c_2)$. Since $b_1=c_2$ we get $\Delta_2=0$. The result now follows from Proposition~\ref{Dm}.
\end{proof}

\begin{corollary}
\label{Dy}
With reference to Notation~\ref{GN}, a $(Y,Y')$-distance-biregular graph with $D=3$ is $2$-$Y$-homogeneous if and only if $\left|\G_2(x)\right| =\deg(x)$. Moreover, if $\G$ is a $2$-$Y$-homogeneous distance-biregular graph with $D=3$, then the intersection array of the colour class $Y$ is of the following type:
$$
(k,c,k-c; 1, c, c+1).
$$
\end{corollary}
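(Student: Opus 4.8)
The plan is to combine Theorem~\ref{ge} with Proposition~\ref{km} by splitting on the valency $k'$, and then to read off the intersection array directly from the standard parity relations among the intersection numbers. First I would record the elementary computation $|\G_2(x)|=b_0b_1/(c_1c_2)=kb_1/c_2$ (from \eqref{Ej} together with $c_1=1$), so that the condition $|\G_2(x)|=\deg(x)=k$ is equivalent to $b_1=c_2$. This reformulation is exactly the bridge used inside the proof of Theorem~\ref{ge}, and it will let me treat both cases uniformly.

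For the equivalence, I would split into two cases according to $k'$. If $k'\ge 3$, the assertion is precisely the equivalence (ii)$\Lra$(iii) of Theorem~\ref{ge}, so nothing further is needed. If $k'=2$, then Proposition~\ref{km} applies: $\G$ is the subdivision graph of a complete graph $K_n$ $(n\ge 3)$, it is automatically $2$-$Y$-homogeneous, and the intersection array of $Y$ is $(k,1,k-1;1,1,2)$. In particular $b_1=c_2=1$, so $|\G_2(x)|=\deg(x)$ holds as well. Hence in the $k'=2$ case both sides of the claimed biconditional hold simultaneously, and the equivalence is trivially true there. Combining the two cases yields the equivalence for every $(Y,Y')$-distance-biregular graph with $D=3$.

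For the ``moreover'' part, I would assume $\G$ is $2$-$Y$-homogeneous with $D=3$; by the equivalence just established, $b_1=c_2$, and I set $c:=b_1=c_2$. Now I would invoke the bipartite parity relations $c_i+b_i=k'$ for odd $i$ and $c_i+b_i=k$ for even $i$. From $i=1$ (with $c_1=1$) I get $b_1=k'-1$, hence $k'=c+1$; from $i=3$ (with $b_3=0$) I get $c_3=k'=c+1$; and from $i=2$ I get $b_2=k-c_2=k-c$. Assembling these, the intersection array of the colour class $Y$ is $(k,b_1,b_2;1,c_2,c_3)=(k,c,k-c;1,c,c+1)$, as claimed.

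There is no genuine obstacle here: the statement is essentially a repackaging of Theorem~\ref{ge} together with the single extra case $k'=2$ furnished by Proposition~\ref{km}, while the displayed array is forced purely by the identities $c_i+b_i\in\{k,k'\}$. The only point requiring care is that Theorem~\ref{ge} is stated under the hypothesis $k'\ge 3$, so the degenerate valency-two case must be handled separately through the subdivision-graph description; once that is done, the two cases merge cleanly because $k'=2$ corresponds to $c=1$ in the array $(k,c,k-c;1,c,c+1)$.
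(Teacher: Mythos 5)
Your proposal is correct and follows essentially the same route as the paper: the paper also derives the equivalence by combining Proposition~\ref{km} (the $k'=2$ case) with Theorem~\ref{ge} (the $k'\ge 3$ case), and obtains the array from $|\G_2(x)|=\deg(x)\Leftrightarrow b_1=c_2$ together with the parity relation $c_3=k'=b_1+c_1=c_2+1$. You merely spell out the case split on $k'$ and the computation of $b_2=k-c_2$ more explicitly than the paper's two-line argument.
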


\begin{proof}
The equivalence follows immediately from Proposition~\ref{km} and Theorem~\ref{ge}. For the intersection array, note that $|\G_2(x)|=\deg(x)$ holds if and only if $b_1=c_2$. On the other hand $c_3=k'=b_1+c_1=c_2+1$, and the result follows.
\end{proof}



\section{Distance-biregular graphs with $\boldsymbol{D=4}$ and $\boldsymbol{D=5}$}
\label{Fe}

In this section we give possible types for the intersection array of a $2$-$Y$-homogeneous $(Y,Y')$-distance-biregular graph with $D=4$ and $D=5$, written in terms of three parameters.

\begin{lemma}
\label{Ff}
With reference to Notation~\ref{GN}, let $\G$ denote a $(Y,Y')$-distance-biregular graph with $D=4$. If $\G$ is $2$-$Y$-homogeneous with $c_2'\ge 2$ then the intersection array of the colour class $Y$ is of the following type:
$$
(k,k'-1,k-c, k'-1-\frac{c(c'-1)}{\gamma}; 1, c, \frac{c(c'-1)}{\gamma}+1, k)
$$
for some positive integers $k$, $k'$ and $c$, where $k>c\ge 2$, $k'>2$, $c'=\frac{(k'-1)(c-1)}{k-1}+1$ and $\gamma=\frac{(c-1)(c'-2)}{k'-2}+1$.
\end{lemma}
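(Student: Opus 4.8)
The plan is to read the intersection array of $Y$ off directly from the generic distance-biregular conventions together with the identities established earlier in the paper, since for $D=4$ almost every entry is forced. First I would record the ``automatic'' entries. Because $c_1=1$, $b_D=b_4=0$, and $c_i+b_i=k$ for even $i$ while $c_i+b_i=k'$ for odd $i$, the array of $Y$ necessarily has the shape $(k,\,k'-1,\,k-c_2,\,k'-c_3;\,1,\,c_2,\,c_3,\,k)$. Writing $c:=c_2$, $c':=c_2'$ and $\gamma:=\gamma_2$, it then remains only to express $c_3$, $c'$ and $\gamma$ in the stated form and to justify the inequalities $c\ge 2$, $k>c$, $k'>2$.

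For the inequalities I would argue as follows. The hypothesis $c_2'\ge 2$ forces $c=c_2\ge 2$ by Proposition~\ref{Eo}; the relation $b_2=k-c\ne 0$ (valid since $2<D$) gives $k>c$; and since $D=4$ yields $D'\ge 3$, the term $b_2'$ is nonzero, so $k'=c_2'+b_2'\ge 2+1=3>2$. Moreover Lemma~\ref{vH}(i) gives $\gamma_2\ge 1$, so $\gamma\ne 0$ and every fraction appearing in the claimed array is meaningful. These observations also confirm that the hypotheses of Lemmas~\ref{vH} and \ref{gH} are in force in the present setting.

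Then I would assemble the three formulas. Lemma~\ref{vH}(ii) directly gives $c_3=\tfrac{c(c'-1)}{\gamma}+1$, whence $b_3=k'-c_3=k'-1-\tfrac{c(c'-1)}{\gamma}$. For $c'$, I note $b_1=k'-1$ and $b_1'=k-c_1'=k-1$, so \eqref{r5} reads $(k'-1)(c-1)=(k-1)(c'-1)$, giving $c'=\tfrac{(k'-1)(c-1)}{k-1}+1$. For $\gamma$, Lemma~\ref{gH}(i), applicable with $i=2$ since $\gamma_2\ne 0$ and $c_2'\ge 2$, reads $(k'-2)(\gamma-1)=(c-1)(c'-2)$, giving $\gamma=\tfrac{(c-1)(c'-2)}{k'-2}+1$. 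Substituting these into the shape from the first step yields exactly the asserted array.

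There is no genuine obstacle here beyond careful bookkeeping: the real content is that every entry of the $Y$-array is pinned down by the bipartite degree conventions together with the three earlier identities \eqref{r5}, Lemma~\ref{vH}(ii) and Lemma~\ref{gH}(i). Consequently the only point demanding attention is the verification that the hypotheses of those results (namely $D\ge 3$, $k'\ge 3$, $c_2'\ge 2$, and $\gamma_2\ne 0$) all hold, which is precisely what the inequality step accomplishes. If I wanted an independent sanity check I could also confirm consistency against Lemma~\ref{vH}(iii), namely $(k'-2)(c-\gamma)=(k-c)(c'-1)$, though this is not needed for the derivation.
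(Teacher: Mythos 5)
Your proposal is correct and follows essentially the same route as the paper: both pin down the array via the bipartite degree conventions together with \eqref{r5} (yielding $c'=\frac{(k'-1)(c-1)}{k-1}+1$), Lemma~\ref{vH} (for $\gamma_2\ge 1$ and $c_3=\frac{c(c'-1)}{\gamma}+1$), and Lemma~\ref{gH}(i) (yielding $\gamma=\frac{(c-1)(c'-2)}{k'-2}+1$). The only cosmetic differences are that you derive $c_2\ge 2$ from Proposition~\ref{Eo} rather than from \eqref{r5} and Lemma~\ref{4w}(i), and you explicitly invoke Lemma~\ref{vH}(ii) for $c_3$, which is what the paper's terse citation of Lemma~\ref{vH} is implicitly using.
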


\begin{proof}
By assumption $c_2'\ge 2$, which yields $k'\ge 3$. Equation \eqref{r5}, Lemma~\ref{4w}(i) and Lemma~\ref{gH}(i)(ii) yield $c_2\ge 2$ and $\gamma_2\ge 1$. Note that $D=4$ implies $c_4=k$, and by \eqref{r5} we have $c_2'-1=\frac{(k'-1)(c_2-1)}{k-1}$. The result follows from Lemma~\ref{gH}(i) and Lemma~\ref{vH}(i).
\end{proof}

\begin{lemma}
\label{Fm}
With reference to Notation~\ref{GN}, let $\G$ denote a $(Y,Y')$-distance-biregular graph with $D=5$. If $\G$ is $2$-$Y$-homogeneous with $c_2'\ge 2$ then the intersection array of the colour class $Y$ is of the following type:
$$
(k,k'-1,k-c, 1+\frac{c(c'-1)}{\gamma}, b_4; 1, c, k'-1-\frac{c(c'-1)}{\gamma}, c_4, k')
$$
for some positive integers $k$, $k'$ and $c$, where $k>c\ge 2$, $k'>2$, $c'=\frac{(k'-1)(c-1)}{k-1}+1$, $\gamma=\frac{(c-1)(c'-2)}{k'-2}+1$, $c_4=\frac{k(k'-1)-\frac{c(b_3-1)(k-1)}{c-1}}{k'-1-\frac{c(c'-1)}{\gamma}}$ and $b_4=k-c_4$.
\end{lemma}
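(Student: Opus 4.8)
The plan is to determine the ten intersection numbers $(k,b_1,b_2,b_3,b_4;1,c_2,c_3,c_4,c_5)$ of the color class $Y$ layer by layer, feeding the machinery of Section~\ref{La} and the counting lemmas of Section~\ref{Ev} into the $D=5$ situation. First I would record the structural preliminaries. Since $c_2'\ge 2$ we have $k'=c_2'+b_2'\ge 3$, and since $D=5$ is odd, $\G_5(x)\subseteq Y'$ forces $D'\ge 5$, so that $\min\{D-1,D'-1\}=4$. By Corollary~\ref{Ek} the hypothesis that $\G$ is $2$-$Y$-homogeneous is equivalent to $\Delta_i=0$ for $2\le i\le 4$, and by Theorem~\ref{mH} every $\gamma_i$ $(1\le i\le 4)$ is nonzero; Proposition~\ref{Eo} together with Lemma~\ref{4w}(i) gives $c_2\ge 2$, so I may abbreviate $c:=c_2\ge 2$, $c':=c_2'$ and $\gamma:=\gamma_2\ge 1$ (Lemma~\ref{vH}(i)). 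The outermost and innermost entries are then immediate: $b_0=k$, $c_1=1$, and using $c_i+b_i=k'$ for odd $i$ and $c_i+b_i=k$ for even $i$ we obtain $b_1=k'-1$ and $b_2=k-c$; finally $b_5=0$ and $5$ odd give $c_5=k'$.

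Next I would pin down the third layer from the relations already available when $c_2'\ge 2$. Equation~\eqref{r5} reads $(k'-1)(c-1)=(k-1)(c'-1)$, which rearranges to $c'=\frac{(k'-1)(c-1)}{k-1}+1$; Lemma~\ref{gH}(i) gives $(k'-2)(\gamma-1)=(c-1)(c'-2)$, i.e. $\gamma=\frac{(c-1)(c'-2)}{k'-2}+1$; and Lemma~\ref{vH}(ii) gives $c_3=\frac{c(c'-1)}{\gamma}+1$, whence $b_3=k'-c_3=k'-1-\frac{c(c'-1)}{\gamma}$. This is exactly the information that the $D=4$ argument of Lemma~\ref{Ff} supplies, so the genuinely new content for $D=5$ is the fourth layer $(b_4,c_4)$.

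The crux is therefore the computation of $c_4$ (then $b_4=k-c_4$), and the plan is to evaluate $p^2_{4,4}=|\G_{4,4}(x,y)|$ for fixed $x\in Y$, $y\in\G_2(x)$ in two ways. On one hand, \eqref{au} with $D=5$ gives $p^2_{4,4}=\frac{b_2b_3}{c_1c_2c_3c_4}(kb_1-c_3c_4)$. On the other hand, I would count the ordered pairs $(z,w)$ with $z\in\G_{4,4}(x,y)$ and $w\in\G_{1,1,3}(x,y,z)$: summing over $z$ and invoking $2$-$Y$-homogeneity gives $p^2_{4,4}\gamma_4$, while summing over the $c_2$ vertices $w\in\G_{1,1}(x,y)$ and applying Lemma~\ref{Ei}(i) with $i=4$ gives $c_2\cdot\frac{c_4k_4(b_3-1)}{b_0b_1}$. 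Feeding in $\gamma_4=\frac{c_4(c'-1)}{k'-1}$ from Lemma~\ref{gH}(ii) with $i=4$ (here $c_5=k'$), together with $k_4=\frac{b_0b_1b_2b_3}{c_1c_2c_3c_4}$ from \eqref{Ej} and the identity $\frac{k'-1}{c'-1}=\frac{k-1}{c-1}$ coming from \eqref{r5}, the common factor $\frac{b_2b_3}{c_3c_4}$ and the factor $c_4$ cancel and the two expressions collapse to $c_3c_4=k(k'-1)-\frac{c(b_3-1)(k-1)}{c-1}$, which is precisely the asserted formula $c_4=\bigl(k(k'-1)-\frac{c(b_3-1)(k-1)}{c-1}\bigr)/c_3$. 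Then $b_4=k-c_4$, and the inequalities $k>c\ge 2$, $k'>2$ hold since $b_2=k-c>0$ and $k'\ge 3$.

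I expect the main obstacle to be the bookkeeping in this two-way count: one must verify that each $w\in\G_{1,1}(x,y)$ does lie at distance $3$ from the relevant $z\in\G_{4,4}(x,y)$ (so that Lemma~\ref{Ei}(i) applies verbatim), and that after substituting $k_4$ and $\gamma_4$ the cancellation of $\frac{b_2b_3}{c_3c_4}$ really leaves a \emph{linear} equation in the product $c_3c_4$ rather than a higher-degree relation. A secondary point to check is that Lemma~\ref{gH}(ii) is licensed here, which requires $\gamma_4\ne 0$ (guaranteed by Theorem~\ref{mH}), and that the cancellation of $c_4$ is legitimate since $c_4\ge 1>0$.
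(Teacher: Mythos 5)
Your proposal is correct, and its skeleton is the same as the paper's: the entries up through the third column are obtained from \eqref{r5}, Lemma~\ref{gH}(i) and Lemma~\ref{vH} exactly as in Lemma~\ref{Ff}, and $c_4$ is obtained by evaluating $p^2_{4,4}$ in two ways, one of which is \eqref{au}. The only real difference is the middle step. The paper gets the second evaluation in one line: by Corollary~\ref{Ek} the scalar $\Delta_4$ vanishes, so by definition $p^4_{2,4}=\frac{(b_3-1)(k'-1)}{c_2'-1}$, and \eqref{Di} converts this into $p^2_{4,4}$. You instead re-derive that same quantity by an explicit double count of the pairs $(z,w)$ with $z\in\G_{4,4}(x,y)$, $w\in\G_{1,1,3}(x,y,z)$ --- Lemma~\ref{Ei}(i) on one side, homogeneity ($\gamma_4$) on the other --- and then eliminate $\gamma_4$ via Lemma~\ref{gH}(ii) at $i=4$. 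This is legitimate (your count is precisely the computation \eqref{Ed} underlying Theorem~\ref{Eg}, and Theorem~\ref{mH} does license Lemma~\ref{gH}(ii)), and the two worries you raise are non-issues: the count only ranges over pairs at distance $3$, so no claim that all of $\G_{4,4}(x,y)$ lies at distance $3$ from $w$ is needed, and the relation is visibly linear in $c_3c_4$. The net effect is that your route unpacks what the paper has already packaged into the scalars $\Delta_i$; it is a bit longer but self-contained modulo Lemmas~\ref{Ei} and \ref{gH}.

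One substantive point you should not pass over silently: your derivation gives $c_3=1+\frac{c(c'-1)}{\gamma}$ and $b_3=k'-c_3=k'-1-\frac{c(c'-1)}{\gamma}$, which is what Lemma~\ref{vH}(ii) forces (the identity $\gamma_2(c_3-1)=c_2(c_2'-1)$ holds verbatim for $D=5$) and what the paper's own proof uses, since it defers to the proof of Lemma~\ref{Ff}. But the statement of Lemma~\ref{Fm} as printed lists these two entries \emph{swapped}: it asserts $b_3=1+\frac{c(c'-1)}{\gamma}$, $c_3=k'-1-\frac{c(c'-1)}{\gamma}$, and correspondingly writes the denominator of the $c_4$-formula as $k'-1-\frac{c(c'-1)}{\gamma}$ rather than as $c_3=1+\frac{c(c'-1)}{\gamma}$. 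So what you prove (and what the paper's proof actually establishes) is the corrected version of the statement, not the literal one; the printed statement is inconsistent with Lemma~\ref{Ff} and Lemma~\ref{vH}(ii), whose derivations do not depend on $D$, and is evidently a typo. A complete referee-quality write-up should flag this discrepancy explicitly instead of deriving values that contradict the statement being proved without comment.
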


\begin{proof}
The result for the first three intersection numbers follow immediately from the proof of Lemma~\ref{Ff}. It is only left to compute $c_4$. 
First note that $D\ge 5$ yields $D'\ge 5$. From the definition of $\Delta_i$ and Corollary~\ref{Ek}, $p^4_{24}=\frac{(b_3-1)(k'-1)}{c_2'-1}$. Now from \eqref{Ej}, \eqref{au} and \eqref{Di}, we have $kb_1-c_3c_4=\frac{c_2(b_3-1)(k-1)}{c_2-1}$, and the result follows.
\end{proof}

\bigskip
\noindent
{\sc{Proof of Theorem~\ref{rH}.}} If $c_2\ge 2$, then from \eqref{r5} we have $b_1=k'-1=\frac{(c_2'-1)(k-1)}{c_2-1}$. Just for the moment assume that $c_2'=2$. Then by Lemmas~\ref{vH}(i) and \ref{gH}(i), $\gamma_2=1$, which together with Lemma~\ref{vH}(ii) yield $c_3=c_2+1$. The rest of theorem follows immediately from Proposition~\ref{km}, Theorems~\ref{hH}, \ref{Db}, \ref{Ep}, Corollary~\ref{Dy} and Lemmas~\ref{Ff}, \ref{Fm}.



\section{Simple examples and suggestions for further research}
\label{Fh}

In this section we give simple examples of (almost) $2$-$Y$-homogeneous graphs, and present some open problems for future research.

\begin{example}{\rm
\label{Hj}
Let $\Gamma^\circ=(X^\circ, \mathcal{R}^\circ)$ denote the Petersen graph. The Petersen graph is a $(3,5)$-cage with diameter $2$ and odd girth. The subdivision graph of the Petersen graph is $2$-$X^\circ$-homogeneous but it is not almost  $2$-$\mathcal{R}^\circ$-homogeneous (see also Figure \ref{03}).
}\end{example}

\begin{example}{\rm
\label{Hk}
The Heawood graph $\Gamma^\circ=(X^\circ, \mathcal{R}^\circ)$ is a $(3,6)$-cage with diameter $3$ and even girth. The subdivision graph of the Heawood graph is $2$-$X^\circ$-homogeneous, it is almost $2$-$\mathcal{R}^\circ$-homogeneous but it is not $2$-$\mathcal{R}^\circ$-homogeneous.
}\end{example}

\begin{example}{\rm
\label{Fl}
Consider the set $\mathcal{P}=\left\lbrace 1,2,3,4,5,6,7,8\right\rbrace $ and let $\mathcal{B}$ be the collection of the following nonempty subsets of $\mathcal{P}$: 
	\begin{eqnarray}
	&&\left\lbrace 1,3,7,8\right\rbrace, 
	\left\lbrace 1,2,4,8\right\rbrace,
	\left\lbrace 2,3,5,8\right\rbrace, 
	\left\lbrace 3,4,6,8\right\rbrace,
	\left\lbrace 4,5,7,8\right\rbrace ,
	\left\lbrace 1,5,6,8\right\rbrace,
	\left\lbrace 2,6,7,8\right\rbrace, \nonumber \\ 	
	&&\left\lbrace 1,2,3,6 \right\rbrace, 	
	\left\lbrace 1,2,5,7 \right\rbrace, 
	\left\lbrace 1,3,4,5 \right\rbrace,
	\left\lbrace 1,4,6,7 \right\rbrace,
	\left\lbrace 2,3,4,7 \right\rbrace,
	\left\lbrace 2,4,5,6\right\rbrace,
	\left\lbrace 3,5,6,7 \right\rbrace.	\nonumber 
	\end{eqnarray} 
Let $\G$ denote the bipartite graph with color classes $\mathcal{P}$ and $\mathcal{B}$ where $\{p,B\}$ is an edge of $\G$ ($p\in\P$, $B\in\B$) if and only if $p\in B$. It is easy to check that $\G$ is distance-biregular. Moreover, every vertex in $\mathcal{P}$ has eccentricity equal to $3$ and intersection array $(7,3,4;1,3,4)$. Pick $p\in \mathcal{P}$. By \eqref{Ej}, $|\G_2(p)|=7=\deg(p)$. Corollary~\ref{Dy} yields that $\G$ is $2$-$\mathcal{P}$-homogeneous.
}\end{example}

\begin{example}{\rm
For a natural number $n$,  an {\it $n$-grid} is an incidence structure $\mathcal{S}= (\mathcal{P},\mathcal{B},\mathcal{I})$ where  $\mathcal{P}=\left\lbrace x_{ij} | \; 0 \leq i, j \leq n \right\rbrace $ and $\mathcal{B}=\left\lbrace L_0, \cdots, L_n, M_0, \cdots, M_n \right\rbrace $ such that $x_{ij}$ lies on $L_k$ if and only if $i=k$ and  $x_{ij}$ lies on $M_k$ if and only if $j=k$. Generalized quadrangles were introduced by J. Tits \cite{tits}. We follow \cite{thas} for the standard concepts in generalized quadrangles. It is easy to see that an $n$-grid is a generalized quadrangle with parameters $s=n$ and $t=1$.  For $n\geq 2$, let $\G(n)=(X, \mathcal{R})$ denote the incidence graph of the $n$-grid (that is, the graph with $X=\mathcal{P} \cup \mathcal{B}$ where two vertices $ p\in  \mathcal{P},  \ell \in  \mathcal{B}$ are adjacent if and only if $p$ lies on  $\ell$). As the $n$-grid is a generalized quadrangle, it is well-known that $\G(n)$ is characterized by being a connected, bipartite graph with diameter $4$ and girth $8$. In addition, we observe $\G(n)$ is distance-biregular. The intersection array of every vertex $p \in \mathcal{P}$ is $(2, n, 1, n; 1, 1, 1, 2)$ while every vertex $\ell \in \mathcal{B}$ has intersection array $(n+1, 1, n, n; 1, 1, 1, n+1)$.  Since the numbers $c_i=c'_i=1 \;(1\leq i \leq 3)$ it follows from Theorem~\ref{Ep} that  $\G(n)$ is both almost $2$-$\mathcal{P}$-homogeneous and almost $2$-$\mathcal{B}$-homogeneous.
}\end{example}

In Section~\ref{D2} it is proven that if $\G^\circ$ is a $(\kappa,g)$-cage graph with vertex set $X^\circ$ then the subdivision graph of $\G^\circ$ is $2$-$X^\circ$-homogeneous. In addition, in Proposition~\ref{Fj} we show that, if $g$ is even then the subdivision graph of $\G^\circ$ is almost $2$-$\R^\circ$-homogeneous.

\begin{proposition}
\label{Fj}
Let $\Gamma^\circ=(X^\circ, \mathcal{R}^\circ)$ denote a $(\kappa,g)$-cage graph with $\kappa\ge 3$ and $g\ge 3$ and let $\Gamma$ denote the subdivision graph of $\Gamma^\circ$. If $g$ is even then $\G$ is almost $2$-$\R^\circ$-homogeneous.
\end{proposition}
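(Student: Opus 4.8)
The plan is to reduce the statement to the explicit intersection array of the subdivision graph together with the characterization in Theorem~\ref{Ep}. The key observation is that when $g$ is even, the color class $\mathcal{R}^\circ$ of edge-vertices has all of its initial intersection numbers $c_i$ equal to $1$, and Theorem~\ref{Ep} says precisely that this forces almost $2$-$Y$-homogeneity.

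First I would fix the roles of the two color classes, taking $Y=\mathcal{R}^\circ$ (the edge-vertices) and $Y'=X^\circ$ (the original vertices), so that the valency of $Y'$ is $k'=\kappa\ge 3$. Next I would observe that, since $g$ is even with $g\ge 3$, in fact $g\ge 4$, and since $\kappa\ge 3$ the graph $\Gamma^\circ$ is not complete; hence its diameter satisfies $d\ge 2$, and the common eccentricity of the vertices in $Y=\mathcal{R}^\circ$ is $D=2d\ge 4\ge 3$, as required by Theorem~\ref{Ep}. Then I would read off from Theorem~\ref{Gp}(b) the intersection array of the color class $\mathcal{R}^\circ$, namely
$$(2,\kappa-1,1,\kappa-1,1,\ldots,1,\kappa-1;\,1,1,1,\ldots,1,1,2),$$
from which $c_i=1$ for every $i$ with $1\le i\le 2d-1=D-1$ (only the terminal $c_{2d}=c_D$ equals $2$). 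This is exactly condition (ii) of Theorem~\ref{Ep}, so the implication (ii)$\Rightarrow$(i) of that theorem, applied with $Y=\mathcal{R}^\circ$, $k'=\kappa\ge 3$ and $D\ge 3$, yields that $\Gamma$ is almost $2$-$\mathcal{R}^\circ$-homogeneous.

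The only point requiring care is bookkeeping rather than a genuine obstacle: Theorem~\ref{Gp} writes the edge-class array with primes (treating $X^\circ$ as the unprimed class), so when $\mathcal{R}^\circ$ is designated as $Y$ the primed array of that theorem becomes our unprimed array, and it is the numbers $c_i$ of $\mathcal{R}^\circ$ (not of $X^\circ$) that must be verified to equal $1$. I would also stress that the argument yields only \emph{almost} homogeneity and not full $2$-$\mathcal{R}^\circ$-homogeneity, in agreement with the Heawood example (Example~\ref{Hk}): Theorem~\ref{Ep} controls the $c_i$ only for $i\le D-1$, and hence (through Corollary~\ref{El}) the scalars $\Delta_i$ only for $2\le i\le D-2$, whereas the terminal value $c_D=2$ leaves the behaviour at $i=D-1$ outside the reach of this argument.
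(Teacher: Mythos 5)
Your proof is correct, but it reaches the conclusion through a different citation than the paper does. The paper keeps the labelling $Y=X^\circ$, $Y'=\mathcal{R}^\circ$ and works directly with the scalars $\Delta_i(\mathcal{R}^\circ)$: from the intersection arrays in Theorem~\ref{Gp}(b) it checks that $\Delta_i(\mathcal{R}^\circ)=0$ for all $i$ $(1\le i\le 2d-2)$, and then invokes Corollary~\ref{El} to conclude almost $2$-$\mathcal{R}^\circ$-homogeneity. You instead relabel $\mathcal{R}^\circ$ as $Y$ (so that $k'=\kappa\ge 3$), read off from Theorem~\ref{Gp}(b) that $c_i=1$ for $1\le i\le D-1$, and invoke the implication (ii)$\Rightarrow$(i) of Theorem~\ref{Ep}. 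The two routes are close relatives --- Theorem~\ref{Ep} is itself proved via Corollary~\ref{Fb} and Corollary~\ref{El} --- but yours is the more economical packaging: it avoids any explicit computation with the $\Delta_i$ (and the attendant parity case split in their definition), requiring only the observation that every entry of the $c$-sequence of $\mathcal{R}^\circ$ except the terminal $c_D=2$ equals $1$. Your bookkeeping is also sound where it needs to be: the relabelling makes the hypotheses $k'\ge 3$ and $D\ge 3$ of Theorem~\ref{Ep} hold (evenness of $g$ together with $\kappa\ge 3$ forces $\Gamma^\circ$ to be non-complete, hence $d\ge 2$ and $D=2d\ge 4$), and your closing remark explaining why the argument cannot yield full $2$-$\mathcal{R}^\circ$-homogeneity is consistent with Example~\ref{Hk}.
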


\begin{proof}
The subdivision graph $\Gamma=S(\G^\circ)$ with vertex set $X=Y\cup Y'$ is distance-biregular with bipartite parts $Y=X^\circ$ and $Y'=\mathcal{R}^\circ$ (see Theorem~\ref{Gp}). Moreover, their intersection numbers depend on the parity of the girth $g$ of $\G^\circ$. Pick $x\in Y$ and $x'\in Y'$. Note $k=\deg(x)=\kappa$ and $k'=\deg(x')=2$. Assume that $g$ is even. Since $x$ and $x'$ have the same eccentricity  $\epsilon(x)=\epsilon(x')=2d$, we can compute $\Delta_i(\mathcal{R}^\circ)$ for every integer $i \; (1 \leq i \leq 2d-1)$. Recall that $c_2=1$ and that $\deg(x)=\kappa>2$. Then, by Theorem~\ref{Gp} we have $\Delta_i=\Delta_i(\mathcal{R}^\circ)=0$ for all $i$ $(1\leq i \leq 2d-2)$. Hence, by Corollary~\ref{El}, the subdivision graph $\G$ is almost $2$-$\mathcal{R}^\circ$-homogeneous. 
\end{proof}


The problem which is beyond our reach is to find an algorithm for constructing a distance-biregular graph from its two intersection arrays (under the assumption that such graph exists). As far as we know, this is not known in the literature and we kindly ask the reader to contact us about any results in this direction. To explain what we want here, let us give a concrete example: we know there exists a distance-biregular graph with intersection arrays $(3,1,2,1,2;\,1,1,1,1,2)$ and $(2,2,1,2,1,1;\,1,1,1,1,2,2)$. We would like to find an algorithm which as input has these two intersection arrays, and which will as output gives us the adjacency matrix of a corresponding graph (note that, in general case, it can happen that such a graph is not unique). Moreover, the same problem can be set up for a bipartite distance-regular graph but the problem is not easier. 

\begin{problem}
Let $\G$ denote a distance-biregular graph. Find an algorithm, if possible, to construct $\G$ from its two intersection arrays. 
\end{problem}

Now, let $\G$ denote a distance-biregular graph with color partitions $(Y,Y')$ and $k'\ge 3$. Recall that in Theorem~\ref{ge} we showed that if $D=3$ then $\G$ is $2$-$Y$-homogeneous if and only if $|\G_2(x)|=\deg(x)$, and we have examples of such graphs (see Example~\ref{Fl}). Thus, the following problem is also interesting.

\begin{problem}
Let $\G$ denote a distance-biregular graph with color partitions $Y$ and $Y'$ such that the following {\rm (i)--(iii)} hold:
\begin{enumerate}[label=(\roman*), font=\rm]
\item $D\ge 4$ and $k'\ge 3$,
\item $\G$ is $2$-$Y$-homogeneous,
\item $|\G_2(x)|> \deg(x)$ for every $x\in Y$.
\end{enumerate}
Prove or disprove that such a graph $\G$ exists. 
\end{problem}

Let's mention two more open problems 
that we didn't manage to solve in this paper: (a)~find the intersection array of $2$-$Y$-homogeneus distance-biregular graph for the case when $c_2'=2$ (see Theorem~\ref{rH}(ii)); and (b)~prove (or disprove) that the two claims from Remark~\ref{Ey} are equivalent.

\section{Acknowledgments}

This work is supported in part by the Slovenian Research Agency (research program P1-0285, research project J1-2451 and Young Researchers Grant).

{\small
\bibliographystyle{references}
\bibliography{almost2homogDBRG}
}

\end{document}